\documentclass[13pt, reqno]{amsart}
\usepackage{dsfont}
\usepackage{bm}
\usepackage{amsmath}
\usepackage{amscd}
\usepackage{amsthm}
\usepackage{amssymb} \usepackage{latexsym}
\usepackage{eufrak}
\usepackage{euscript}
\usepackage{epsfig}
\usepackage{graphics}
\usepackage{array}
\usepackage{enumerate}
\usepackage{color}
\usepackage{wasysym}
\usepackage{pdfsync}
\usepackage{stmaryrd}
\usepackage{graphicx}
\usepackage{url}
\usepackage{float}
\usepackage[font=small, labelformat=simple]{caption}

\newcommand{\bel}[1]{\begin{equation*}\label{#1}}
	
	\newcommand{\be}{\begin{equation}}

		\newcommand{\ba}{\begin{eqnarray}}
			\newcommand{\ea}{\end{eqnarray}}

		\newcommand{\qe}{\end{equation}}
	\newcommand{\R}{{\mathbb R}}
	\newcommand{\N}{{\mathbb N}}
	\newcommand{\Z}{{\mathbb Z}}
	\newcommand{\C}{{\mathbb C}}
	\newcommand{\T}{{\mathbb T}}

	\newcommand{\eg}{\begin{example}}
		\newcommand{\egd}{\end{example}}
	\newcommand{\tm}{\begin{thm}}
		\newcommand{\tmd}{\end{thm}}
	\newcommand{\co}{\begin{coro}}
		\newcommand{\cod}{\end{coro}}
	\newcommand{\enu}{\begin{enumerate}}
		\newcommand{\enud}{\end{enumerate}}
	\newcommand{\rmk}{\begin{rem}}
		\newcommand{\rmkd}{\end{rem}}
	
	\theoremstyle{theorem}
	\newtheorem{thm}{Theorem}[section]
	\newtheorem{prop}[thm]{Proposition}
	\theoremstyle{example}
	\newtheorem{example}[thm]{Example}
	\newtheorem{coro}[thm]{Corollary}
	\theoremstyle{lemma}
	\newtheorem{lemma}[thm]{Lemma}
	\theoremstyle{definition}
	\newtheorem{defi}[thm]{Definition}
	\theoremstyle{proof}
	
	\theoremstyle{remark}
	\newtheorem{rem}[thm]{Remark}
	\theoremstyle{remark}

	\UseRawInputEncoding

\begin{document}

		\title[Polynomial growth of Sobolev norms of solutions of the fractional
		NLS equation on $\T^{d}$]{Polynomial growth of Sobolev norms of solutions of the fractional NLS equation on $\T^{d}$}

		\author{Jiajun Wang}
		\address{Jiajun Wang: Courant Institute of Mathematical Sciences,
			New York University, New York, NY}
		\email{jw9409@nyu.edu}
		
		\begin{abstract}
			In this paper, we prove polynomial growth bounds for the Sobolev norms of solutions to the fractional nonlinear Schr\"odinger equation on the torus $\T^{d}$ ($d \ge 2$), following and extending a result of Joseph Thirouin on $\T$ \cite{1}. The key ingredient is the establishment of Strichartz estimates for the fractional Schr\"odinger equation on $\T^{d}$. To this end, we employ uniform estimates for oscillatory integrals to overcome the lack of uniformity that arises in higher dimensions.
		\end{abstract}
		
		\maketitle
		\vspace{-0.7cm}
		\numberwithin{equation}{section}
		\section{Introduction}
		This paper is devoted to the study of the growth of Sobolev norms for solutions to the following fractional nonlinear Schr\"odinger equation~\eqref{FNLS} on the torus $\T^d$ with cubic nonlinearity:
		\vspace{-4pt}
		\begin{equation}\label{FNLS}
			\begin{cases}
				i\partial_t u(t,x) = |D|^{\alpha} u + |u|^2 u, \\[4pt]
				u(0,x) = u_0(x), \qquad (t,x)\in \R \times \T^d,
			\end{cases}
		\end{equation}
		where $|D|^{\alpha} := (\sqrt{-\Delta})^{\alpha}$ and $\alpha \in (0,1)\cup(1,\infty)$.
		
		We remark that all of our results extend to the more general nonlinearity $|u|^{2\sigma}u$ with $\sigma \in \mathbb{Z}^{+}$, which is called the algebraic case. In some cases, this requires only minor modifications of the original arguments, while in others, additional ideas are needed. For the sake of clarity and simplicity of presentation, we focus primarily on the cubic case. The necessary adjustments for the algebraic cases will be briefly discussed after the proofs of the corresponding cubic results.
		
		The fractional nonlinear Schr\"odinger equation appears in a variety of physical settings. 
		It plays a fundamental role in the framework of fractional quantum mechanics developed by Laskin~\cite{34}, where the classical Feynman path integral formulation is extended from Brownian motion to $\alpha$-stable L\'evy processes, leading naturally to a fractional Schr\"odinger-type dynamics. 
		
		Furthermore, starting from a discrete nonlinear Schr\"odinger equation with general lattice interactions, one can derive this fractional model as an appropriate continuum limit \cite{34}. In addition, the fractional Schr\"odinger equation is closely related to other physical models, such as water waves (see \cite{36}).

		The case $\alpha=2$ corresponds to the traditional nonlinear Schr\"odinger equation, for which the topic of growth of Sobolev norms has been extensively studied (see \cite{18,16,19,17,20}). In the case $\alpha=1$, equation (\ref{FNLS}) is known to be non-dispersive and is called the half-wave equation. For this reason, we will not study it in this paper.
		
		Before stating our main results, we need to introduce the following parameters $\gamma_{\alpha,p}, \ell_{\alpha,p}$, for $1\le p\le\infty$, $d\ge 2$, which will be used throughout this paper.
		
		\begin{itemize}
			\item $\alpha>1$:
			\begin{equation*}
				\gamma_{\alpha,p}:=
				\left\{
				\begin{aligned}
					&\frac{d\alpha}{2},
					&& (\alpha\ge2)\ \text{or}\ (1<\alpha<2,\ \tfrac{2\alpha}{2-\alpha}\ge pd),
					\\
					&d-\frac{\alpha}{p},
					&& (1<\alpha<2,\ \tfrac{2\alpha}{2-\alpha}< pd),
				\end{aligned}
				\right.
			\end{equation*}
			
			\item $0<\alpha<1$: 
			\begin{equation*}
				\ell_{\alpha,p}:=d-\frac{\alpha}{p}.
			\end{equation*}
		\end{itemize}
		
		In this paper, we first establish the following (local) Strichartz estimate.
		\begin{thm}\label{Strichartz main}
			For $1\le p\le \infty$, if $\alpha>1$, $\gamma>\frac{\gamma_{\alpha,p}}{2}$, we have the Strichartz estimate
			\begin{equation*}
				\|e^{-it|D|^{\alpha}}f\|_{L^{2p}([0,1];L^{\infty}(\T^d))}\lesssim \|f\|_{H^{\gamma}(\T^d)}.
			\end{equation*}
			Similarly, if $0<\alpha<1$, $\gamma>\frac{\ell_{\alpha,p}}{2}$, the above estimate also holds.
		\end{thm}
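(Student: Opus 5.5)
Following the approach of Thirouin on $\T$ \cite{1}, we first reduce to dyadic frequency pieces and then prove a dispersive-type bound on short time intervals for the frequency-localized propagator. Let $P_N$ denote a smooth Littlewood--Paley projection to $|k|\sim N$. It suffices to prove
\begin{equation*}
\|e^{-it|D|^{\alpha}}P_N f\|_{L^{2p}([0,1];L^{\infty}(\T^d))}\lesssim N^{\frac{\gamma_{\alpha,p}}{2}}\|P_N f\|_{L^2(\T^d)},
\end{equation*}
with $\ell_{\alpha,p}$ in place of $\gamma_{\alpha,p}$ when $0<\alpha<1$. Summing over dyadic $N$ with the loss $N^{\gamma_{\alpha,p}/2-\gamma}$ then gives the estimate for every $\gamma>\gamma_{\alpha,p}/2$. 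For the dyadic bound we use the $TT^*$ argument. The kernel of $P_N^2 e^{-i(t-s)|D|^{\alpha}}$ is
\begin{equation*}
K_N(t,x)=\sum_{k\in\Z^d}\psi(k/N)^2e^{i(k\cdot x-t|k|^\alpha)} .
\end{equation*}
Since $|K_N|\le N^d$ always, the whole task is to find the time scale on which $K_N$ still disperses like the Euclidean kernel.

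\textbf{Step 1 (dispersive estimate on short times).} By Poisson summation,
\begin{equation*}
K_N(t,x)=\sum_{m\in\Z^d}\int_{\R^d}\psi(\xi/N)^2e^{i(\xi\cdot(x+2\pi m)-t|\xi|^\alpha)}\,d\xi .
\end{equation*}
After rescaling $\xi=N\eta$, the phase becomes $N\eta\cdot(x+2\pi m)-tN^\alpha|\eta|^\alpha$, with $\eta$ in a fixed annulus. The Hessian of $|\eta|^\alpha$ there is nondegenerate for $\alpha\ne1$: it has eigenvalues $\alpha(\alpha-1)|\eta|^{\alpha-2}$ and $\alpha|\eta|^{\alpha-2}$.

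For $|t|\lesssim N^{1-\alpha}$, the group velocity $tN^{\alpha-1}$ is $\lesssim 1$. Hence for $m\ne0$ the gradient of the phase has size $\gtrsim N|m|$, and nonstationary phase makes these terms summable and negligible. The $m=0$ term is bounded by stationary phase, giving $|K_N(t,x)|\lesssim N^d(1+N^\alpha|t|)^{-d/2}$.

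The main obstacle is uniformity. In dimension $d\ge2$ the stationary point depends on $x$, and the nonstationary regions border the stationary one. We need constants uniform in $(x,t,m)$, including transitional regions where the gradient is small but nonzero along some directions. Here we would invoke a uniform oscillatory integral estimate for nondegenerate phases with parameters (a Stein/Littman-type bound with constants depending only on finitely many derivatives of the amplitude and phase and a lower bound on the Hessian determinant). This gives $|\int\chi(\eta)e^{i\lambda\Phi}|\lesssim\lambda^{-d/2}$ uniformly, combined with integration by parts when $|\nabla\Phi|\gtrsim1$.

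\textbf{Step 2 (Strichartz on short intervals and gluing).} The dispersive bound plus $TT^*$ gives, on any interval $I$ with $|I|=N^{1-\alpha}$ (or $|I|=1$ when this is larger), the bound
\begin{equation*}
\|e^{-it|D|^\alpha}P_Nf\|_{L^{2p}(I;L^\infty)}^2\lesssim\Big\|\int_I N^d(1+N^\alpha|t-s|)^{-d/2}\|F(s)\|_{L^1}\,ds\Big\|_{L^{p}_t}.
\end{equation*}
We estimate the right side by Young/Hardy--Littlewood--Sobolev. In the range where $(1+N^\alpha|\tau|)^{-d/2}$ is integrable in $L^{p/2}$-type norms on $I$, this yields the loss $N^{d-\alpha/p}$. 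In the other range the scaling loss $N^{d\alpha/2}\cdot$(power) appears, and this is where the comparison of $\frac{2\alpha}{2-\alpha}$ with $pd$ decides between the two formulas for $\gamma_{\alpha,p}$. For $\alpha>1$ the interval $[0,1]$ splits into $N^{\alpha-1}$ such intervals, and summing the $L^{2p}$ norms using $L^2$ conservation costs a factor $N^{(\alpha-1)/(2p)}$ in the square-root. For $\alpha<1$ the time $N^{1-\alpha}\ge1$, so no gluing is needed, consistent with the single formula $\ell_{\alpha,p}=d-\alpha/p$. We would carry out the exponent bookkeeping case by case and take the better of the trivial Bernstein bound $N^{d/2}$ and the dispersive bound. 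This should reproduce exactly $\gamma_{\alpha,p}/2$ and $\ell_{\alpha,p}/2$.
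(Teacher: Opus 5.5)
Your Step 1 matches the paper's Case 1: Poisson summation, non-stationary phase for $m\neq0$ when $|t|\ll N^{1-\alpha}$, and uniform stationary phase for $m=0$. The gap is in Step 2 for $\alpha>1$: gluing short-time estimates cannot give $\gamma_{\alpha,p}$.

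Do the bookkeeping in your own scheme. Since $d\ge2$ and $p\ge1$ give $dp/2\ge1$, on an interval of length $N^{1-\alpha}$ one always has
\[
\big\|N^d(1+N^\alpha|t|)^{-d/2}\big\|_{L^p([0,N^{1-\alpha}])}\sim N^{d-\frac{\alpha}{p}}
\]
(up to a logarithm when $dp=2$). So $TT^*$ gives $N^{\frac12(d-\frac{\alpha}{p})}$ per interval, and there is no range in which $N^{d\alpha/2}$ appears. Summing $2p$-th powers over the $N^{\alpha-1}$ intervals with $L^2$ conservation multiplies by $N^{\frac{\alpha-1}{2p}}$, and you end up with
\[
\|e^{-it|D|^\alpha}P_Nf\|_{L^{2p}([0,1];L^\infty(\T^d))}\lesssim N^{\frac12(d-\frac1p)}\|P_Nf\|_{L^2(\T^d)}.
\]
For $1<\alpha<2$, $pd>\frac{2\alpha}{2-\alpha}$ and $p<\infty$, one has $\gamma_{\alpha,p}=d-\frac{\alpha}{p}<d-\frac1p$. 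So your argument proves the estimate only for $\gamma>\frac12(d-\frac1p)$, which is strictly weaker than the theorem. It also falls short in part of the other regime: for $d=2$, $\alpha=\frac32$, $p=\frac52$ one has $\gamma_{\alpha,p}=\frac32<\frac85$. Your bound does beat $\gamma_{\alpha,p}$ when $\alpha\ge2$, but that case is trivial by Sobolev embedding, since $\frac{d\alpha}{4}\ge\frac d2$. The dichotomy $pd\gtrless\frac{2\alpha}{2-\alpha}$ cannot come out of your scheme, so the claim that the bookkeeping reproduces $\gamma_{\alpha,p}/2$ is false.

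The missing idea is a dispersive bound on all of $|t|\le1$, followed by a single $TT^*$ step on $[0,1]$ with no gluing. For $N^{1-\alpha}\lesssim|t|\le1$, keep the Poisson sum. The terms with $|m|\lesssim N^{\alpha-1}|t|$ have rescaled velocity $v=N^{1-\alpha}(x+2\pi m)/t$ with $|v|\lesssim1$. The same uniform stationary phase bound then controls each of them by $N^{d-\frac{d\alpha}{2}}|t|^{-\frac d2}$, and there are $\sim(N^{\alpha-1}|t|)^d$ of them. The remaining terms have $|\nabla\phi|\gtrsim N|m|$ and are summable. This gives $\|K_N(\cdot,t)\|_{L^\infty}\lesssim N^{\frac{d\alpha}{2}}|t|^{\frac d2}$ in that range. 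Young's inequality on $[0,1]$, with kernel in $L^p_t$ and data in $L^{(2p)'}_tL^1_x$, then yields
\[
\|\omega_N\|_{L^p([0,1])}\sim N^{\max\{d-\frac{\alpha}{p},\,\frac{d\alpha}{2}\}}=N^{\gamma_{\alpha,p}},
\]
and this is exactly where the comparison of $\frac{2\alpha}{2-\alpha}$ with $pd$ comes from. For $0<\alpha<1$ your argument is fine as written, because the short-time bound already covers $[0,1]$.
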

		\begin{rem}
			This estimate, in some sense, shows the smoothing effect of $e^{-it|D|^{\alpha}}$, since when $\gamma\le d/2$, the well-known embedding ``$H^{\gamma}(\T^d)\subseteq L^{\infty}(\T^d)$" fails.
		\end{rem}
		
		With this Strichartz estimate and its corollaries, we can prove the local well-posedness of the fractional nonlinear Schr\"odinger equation (\ref{FNLS}) in the Bourgain space $X_{\alpha}^{\gamma,b}$, which will be formally defined in Section 3.
		\begin{thm}\label{local well-posedness}
			Let $\alpha>1$, $1<p\le\infty$, $\gamma>\frac{\gamma_{\alpha,p}}{2}$, $\frac{1}{2}<b<1-\frac{1}{2p}$. If $u_{0}\in H^{\gamma}(\T^d)$, then there exists $T_0=T_0(\|u_0\|_{H^{\gamma}(\T^d)})>0$ such that equation (\ref{FNLS}) has a unique solution $u\in X_{\alpha}^{\gamma,b}([-T_0,T_0]\times\T^d)\supseteq C([-T_0,T_0]; H^{\gamma}(\T^d))$. The solution $u$ also satisfies the bound 
			\begin{equation}\label{control}
				\|u\|_{X_{\alpha}^{\gamma,b}([-T_0,T_0]\times\T^d)}\lesssim \|u_{0}\|_{H^{\gamma}(\T^d)}.
			\end{equation}
			Moreover, if the initial data $u_{0}$ has higher regularity, i.e., $u_{0}\in H^{s}(\T^d)$ for $s>\gamma$, then we can still ensure that $u\in C([-T_0,T_0]; H^{s}(\T^d))$.
			
			Similarly, replacing $\gamma_{\alpha,p}$ with $\ell_{\alpha,p}$, the above result also holds for $0<\alpha<1$.
		\end{thm}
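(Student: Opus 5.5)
We will run the standard Bourgain-space contraction argument, with Theorem~\ref{Strichartz main} as the only dimension-specific input. Let $\psi\in C_c^\infty(\R)$ be a time cutoff equal to $1$ on $[-1,1]$, and set $\psi_T(t)=\psi(t/T)$. We look for a fixed point of
\begin{equation*}
\Phi(u)(t)=\psi(t)e^{-it|D|^{\alpha}}u_0-i\psi_T(t)\int_0^t e^{-i(t-s)|D|^{\alpha}}\bigl(|u|^2u\bigr)(s)\,ds
\end{equation*}
in a ball of $X_{\alpha}^{\gamma,b}$. We use the usual linear estimates:
\begin{equation*}
\|\psi e^{-it|D|^\alpha}u_0\|_{X^{\gamma,b}_\alpha}\lesssim\|u_0\|_{H^\gamma},
\qquad
\Bigl\|\psi_T\int_0^t e^{-i(t-s)|D|^\alpha}F\,ds\Bigr\|_{X^{\gamma,b}_\alpha}\lesssim T^{1-b-b'}\|F\|_{X^{\gamma,-b'}_\alpha},
\end{equation*}
valid for $0<b'<1/2<b$ with $b+b'<1$. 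These follow from one-dimensional computations in $\tau$ after conjugating by the free flow, exactly as for $\alpha=2$. We also use the embedding $X^{\gamma,b}_\alpha\subset C_tH^\gamma$ for $b>1/2$.

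\textbf{Transference.} We transfer Theorem~\ref{Strichartz main} to Bourgain spaces. Write $u=\int e^{it\tau}e^{-it|D|^\alpha}f_\tau\,d\tau$ with $f_\tau=\widehat{\,\cdot\,}$ of the profile, and apply Minkowski and Cauchy--Schwarz in $\tau$. This gives, for $b>1/2$,
\begin{equation*}
\|u\|_{L^{2p}([0,1];L^\infty)}\lesssim\|u\|_{X^{\gamma,b}_\alpha}\quad(\gamma>\gamma_{\alpha,p}/2).
\end{equation*}
Interpolating with the trivial bound $\|u\|_{L^2_{t,x}}=\|u\|_{X^{0,0}}$ then gives an $L^{q}_tL^{r}_x$ bound with $b'<1/2$, at the cost of a little derivative. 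The hypothesis $b<1-\frac1{2p}$ suggests the intended exponents: the dual $b'$ must satisfy $b'>\frac1{2p}$-type constraints so that $b+b'<1$ is possible.

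\textbf{Nonlinear estimate.} The core estimate we aim for is
\begin{equation*}
\||u|^2u\|_{X^{\gamma,-b'}_\alpha}\lesssim\|u\|_{X^{\gamma,b}_\alpha}^3 ,
\end{equation*}
and more generally $\lesssim\|u\|_{X^{s,b}}\|u\|_{X^{\gamma,b}}^2$ for $s\ge\gamma$, which yields persistence of higher regularity on the same time interval $T_0$. We prove it by duality: pair with $v\in X^{-\gamma,b'}$. We put the $\langle\nabla\rangle^\gamma$ on the highest-frequency factor, using $\langle\xi\rangle^\gamma\lesssim\max_j\langle\xi_j\rangle^\gamma$ (a fractional Leibniz rule). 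Hölder then gives
\begin{equation*}
\bigl|\langle |u|^2u,v\rangle\bigr|\le\|\langle\nabla\rangle^\gamma u\|_{L^2_tL^2_x}\,\|u\|_{L^{2p}_tL^\infty_x}^2\,\|v\|_{L^{q}_tL^2_x},\qquad \tfrac12+\tfrac1p+\tfrac1q=1,
\end{equation*}
so $q=\frac{2p}{p-1}$. The last factor is controlled by $\|v\|_{X^{0,b'}}$ via the Sobolev embedding in time $H^{b'}_t\subset L^q_t$, which requires $b'\ge\frac12-\frac1q=\frac1{2p}$. The two middle factors are handled by the transference step. The derivative carried by $v$ is harmless, since it is absorbed by the weight $\langle\xi\rangle^{-\gamma}$ at top frequency.

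\textbf{Conclusion and main obstacle.} With $b'=\frac1{2p}+\epsilon$, the condition $b+b'<1$ is exactly the hypothesis $b<1-\frac1{2p}$. The factor $T^{1-b-b'}>0$ then makes $\Phi$ a contraction on a ball of radius $2C\|u_0\|_{H^\gamma}$ for $T_0\sim\|u_0\|_{H^\gamma}^{-c}$. This gives existence, uniqueness, and \eqref{control}. For $s>\gamma$, the linear-in-$H^s$ nonlinear estimate propagates $H^s$ regularity on $[-T_0,T_0]$. The case $0<\alpha<1$ is identical with $\ell_{\alpha,p}$ in place of $\gamma_{\alpha,p}$.

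The main obstacle is the frequency bookkeeping in the nonlinear estimate. The $\gamma$ derivatives must land on a factor estimated in $L^2$, while the $L^{2p}L^\infty$ factors need the full $\gamma>\gamma_{\alpha,p}/2$. When two frequencies are comparable and high, we must check that the Strichartz norm still absorbs the loss. This is done by a Littlewood--Paley decomposition with dyadic summation, using the strict inequality in $\gamma$ to gain $N^{-\epsilon}$ and make the sums converge.
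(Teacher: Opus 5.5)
\textbf{Gap in the Hölder exponents of the nonlinear estimate.} You place the top-frequency factor $\langle\nabla\rangle^\gamma u$ in $L^2_tL^2_x$ and two factors in $L^{2p}_tL^\infty_x$. The H\"older relation $\frac12+\frac1p+\frac1q=1$ then forces $\frac1q=\frac12-\frac1p$, i.e.\ $q=\frac{2p}{p-2}$, not $\frac{2p}{p-1}$. The time-Sobolev embedding $H^{b'}_t\subset L^q_t$ therefore requires $b'\ge\frac12-\frac1q=\frac1p$. With $b+b'<1$ this only permits $b<1-\frac1p$, which is strictly smaller than the hypothesis $b<1-\frac1{2p}$.

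Worse, for $1<p\le 2$ the exponents do not close at all. These are the cases with the lowest regularity thresholds, since $\gamma_{\alpha,p}$ and $\ell_{\alpha,p}$ are nondecreasing in $p$. At $p=2$ you would need $v\in L^\infty_tL^2_x$, i.e.\ $b'>\frac12$, which the Duhamel estimate forbids. For $p<2$, H\"older fails outright. So your claim that ``$b+b'<1$ is exactly the hypothesis $b<1-\frac1{2p}$'' rests on an arithmetic slip, and the argument as written does not cover the stated range of $p$ and $b$.

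\textbf{The fix.} Split the time integrability symmetrically between the two $L^2_x$ factors, as the paper does in Proposition~\ref{multi}. Place the high-frequency factor (carrying $\langle\nabla\rangle^\gamma$, or $\langle\nabla\rangle^s$ for persistence of regularity) in $L^{\frac{2p}{p-1}}_tL^2_x$ as well. Control it by $\|w\|_{L^{\frac{2p}{p-1}}_tL^2_x}\lesssim\|w\|_{X^{0,b'}_\alpha}$ for $b'>\frac1{2p}$ (Lemma~\ref{Stri4}), together with $X^{\gamma,b}_\alpha\subset X^{\gamma,b'}_\alpha$. Then $\frac{p-1}{2p}+\frac1{2p}+\frac1{2p}+\frac{p-1}{2p}=1$, and only $b'>\frac1{2p}$ is needed. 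This is compatible with $b+b'<1$ precisely when $b<1-\frac1{2p}$. With this change, the rest of your outline agrees with the paper: duality, moving the weight from the dual function to the highest frequency, and dyadic summation using the gap $\gamma>\gamma_0>\frac{\gamma_{\alpha,p}}{2}$.

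\textbf{A smaller omission.} Your transference only yields $L^{2p}([0,1];L^\infty)$, whereas the duality pairing is over $\R\times\T^d$. You need either the localization argument behind Lemma~\ref{Stri3}, or a time cutoff on the nonlinearity. The cutoff is harmless, since the Duhamel term only sees $|s|\lesssim T$.
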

		\begin{rem}
			This local well-posedness theory can be directly extended to the fractional Schr\"odinger equation with algebraic nonlinearity; see Remark \ref{high} for relevant discussions.
		\end{rem}
		\begin{rem}
			To the best of our knowledge, the existing well-posedness results for the fractional nonlinear Schr\"{o}dinger equation are primarily concerned with the Euclidean setting $\R^d$ (see \cite{29,27,26,29}) and the one-dimensional periodic setting $\T$ (see \cite{30,31,32}). The only result we are aware of in the higher-dimensional periodic case $\T^d$ is \cite{37}, where global existence and uniqueness were established for $\alpha>d$. In other words, Theorem~\ref{local well-posedness} appears to provide the first local well-posedness result in the higher-dimensional periodic setting covering the entire dispersive range, namely $\alpha \in (0,1)\cup(1,\infty)$.
			
			The difficulty in the periodic case $\T^d$, compared with the Euclidean case $\R^d$, lies in the lack of scaling invariance. For example, with scaling, the dispersive estimate on the frequency set $\lbrace|\xi|\sim 1\rbrace$ can directly imply the estimate on the frequency set $\lbrace|\xi|\sim 2^{j}\rbrace$, which can immediately lead to the useful Strichartz estimate from the argument of Keel and Tao \cite{33}. However, this is not the case in the periodic setting, which requires a more delicate analysis. The main new ingredient of this paper is the introduction of uniform estimates for oscillatory integrals, which effectively overcomes the complexities arising on higher-dimensional torus.
			
		\end{rem}
		
		As an important application of the Strichartz estimates established above, we prove the polynomial growth of the Sobolev norms of solutions $u$. The derivation of this polynomial bound is based on the modified energy method, which replaces the $H^{\alpha+n}$-norm by an essentially equivalent energy functional whose time derivative exhibits a more tractable structure; see \cite{22,20} for related arguments.
		
		A key technical difficulty in this approach is the control of the $L^{\infty}$-norm of $u$, since this quantity repeatedly appears in the estimates of the modified energy. The role of our Strichartz estimates is precisely to ensure that, under an additional assumption, the $L^{\infty}$-norm still grows at most polynomially in time, in the regime $\alpha \le d$. By contrast, a more classical strategy for controlling the $L^{\infty}$-norm relies on conservation laws, which ensure the boundedness of the $H^{\frac{\alpha}{2}}$-norm, together with the Sobolev embedding
		\[
		H^{\frac{\alpha}{2}}(\T^d)\hookrightarrow  L^{\infty}(\T^d), \quad \alpha>d.
		\]
		
		In this paper, we focus on higher-dimensional cases $d \ge 2$, which are substantially more intricate than the one-dimensional setting. The main source of difficulty lies in the failure of the Kenig--Ponce--Vega estimate. To overcome this difficulty, we introduce more generalized fractional Leibniz rules, which will be stated in Appendix B.
		
		In the following, we present our results on polynomial growth separately for the two regimes: $\alpha > d$ and $\frac{d}{2} < \alpha \le d$.
		
		\begin{thm}\label{d}
			For $d\ge 2$, $\alpha>d$, and $u_0\in C^{\infty}(\T^d)$, there exists a unique global solution
			$u\in C^{\infty}(\R; C^{\infty}(\T^d))$ to (\ref{FNLS}).
			
			Moreover, $u$ satisfies the polynomial growth estimate
			\begin{equation*}
				\|u(t)\|_{H^{\alpha+n}(\T^d)}\lesssim_{u_0}(1+|t|)^{\frac{2n+\alpha}{\alpha-d}}, 
				\quad \forall\, t\in\R,\; \forall\, n\in\N.
			\end{equation*}
			On the other hand, if $\frac{d}{2}<\alpha\le d$ and we assume the a priori bound 
			\begin{equation}\label{priori}
				\|u(t)\|_{H^{\gamma}(\T^d)}\lesssim_{u_0}(1+|t|)^{A}, 
				\quad \forall\, t\in\R,
			\end{equation}
			where $\gamma>\frac{\gamma_{\alpha,p}}{2}$ for some $p\ge2$, then 
			$\|u(t)\|_{H^{\alpha+n}(\T^d)}$ also grows at most polynomially in time for every $n\in\N$.
		\end{thm}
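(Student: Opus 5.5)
Global existence and smoothness will come from the local theory (Theorem \ref{local well-posedness}) plus a priori bounds on every $H^s$ norm. For $\alpha>d$ we use the conserved mass $M(u)=\|u\|_{L^2}^2$ and the defocusing energy $E(u)=\frac12\||D|^{\alpha/2}u\|_{L^2}^2+\frac14\|u\|_{L^4}^4$. These give a uniform bound on $\|u(t)\|_{H^{\alpha/2}}$, and since $\alpha/2>d/2$, Sobolev embedding yields $\sup_t\|u(t)\|_{L^\infty}\lesssim_{u_0}1$. Because $H^{\alpha/2}$ then lies above the threshold $\gamma_{\alpha,p}/2$ or $\ell_{\alpha,p}/2$ (for instance with $p=\infty$ one needs $\gamma>d/2$ in the relevant range), the local existence time is uniform. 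Persistence of regularity in Theorem \ref{local well-posedness} then gives a global solution in $C(\R;H^s)$ for every $s$, and smoothness in time follows from the equation. Uniqueness is inherited from the local theory.

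For the polynomial bound I will use the modified energy method. Set $s=\alpha+n$ and take the leading energy $\frac12\||D|^{s}u\|_{L^2}^2$. Its time derivative is $\mathrm{Im}\int |D|^{s}(|u|^2u)\,\overline{|D|^{s}u}$. The fractional Leibniz rules of Appendix B will expand $|D|^{s}(|u|^2u)$. The term where all derivatives fall on one factor $u$ gives $\mathrm{Im}\int |u|^2||D|^su|^2=0$, so it vanishes. The term with $u^2\,\overline{|D|^s u}$ does not vanish; it is of the form $\mathrm{Im}\int u^2\,\overline{|D|^su}^{\,2}$. Substituting the equation $|D|^\alpha u=i\partial_tu-|u|^2u$ and writing $|D|^{s}=|D|^{\alpha}|D|^{n}\cdots$ (integrating by parts in the $|D|^{\alpha/2}$ sense), I rewrite this as a time derivative $\frac{d}{dt}R(u)$ plus remainders. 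I then define $\mathcal E_s(u)=\frac12\||D|^su\|^2+R(u)$ with $|R(u)|\lesssim \|u\|_{L^\infty}^2\|u\|_{H^{s-\alpha/2}}^2$. The remainders will be bounded by
\[
\Big|\frac{d}{dt}\mathcal E_s\Big|\lesssim C(\|u\|_{L^\infty},\|u\|_{H^{\alpha/2}})\,\|u\|_{H^{s}}^{2-\delta}.
\]
Here $\delta>0$ comes from interpolating the intermediate norms $H^{s-\alpha/2}$, $H^{s-\alpha+\dots}$ between $H^{\alpha/2}$ and $H^s$. The gain available is essentially $\alpha-d$ derivatives of room beyond what Sobolev embedding at $d/2$ costs. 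Interpolation also gives $|R|\le\frac14\|u\|_{H^s}^2+C$, so $\mathcal E_s\sim\|u\|_{H^s}^2$. Integrating $y'\lesssim y^{1-\delta/2}$ yields $y(t)\lesssim (1+|t|)^{2/\delta}$. Tracking the exponents from the interpolation, $\|u\|_{H^{s-\alpha/2}}\le\|u\|_{H^{\alpha/2}}^{\theta}\|u\|_{H^s}^{1-\theta}$, should produce the stated rate $(2n+\alpha)/(\alpha-d)$. If a single step is not sharp, I will induct on $n$, using the bound for $H^{\alpha+n-1}$ as an input.

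In the range $d/2<\alpha\le d$ the argument is the same, except that $\|u\|_{L^\infty}$ is no longer bounded by conservation laws. I will control it in time-averaged form. On each unit interval $[k,k+1]$, apply Theorem \ref{local well-posedness} with data $u(k)$. The a priori bound \eqref{priori} makes the lifespan at least $(1+k)^{-cA}$, so the interval is covered by polynomially many subintervals. On each subintervals the Strichartz estimate (Theorem \ref{Strichartz main}, transferred to $X^{\gamma,b}_\alpha$) gives $\|u\|_{L^{2p}L^\infty}\lesssim\|u\|_{H^\gamma}$. Hence $\int_k^{k+1}\|u\|_{L^\infty}^{2}\,dt\lesssim (1+k)^{B}$ since $p\ge2$, which is the reason for requiring $p\ge2$. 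Putting this time-integrable coefficient into the differential inequality, with Hölder in time, still gives polynomial growth.

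The main obstacle is the energy correction in dimension $d\ge2$. Without the Kenig--Ponce--Vega commutator estimate, the remainder terms must be estimated using only $\|u\|_{L^\infty}$ and the generalized Leibniz rules of Appendix B, while losing strictly fewer than two top-order derivatives. I would first try to check the worst term, the $u^2\,\overline{|D|^su}^{\,2}$ interaction, at the level of its Fourier multiplier. The aim is to show that the symbol $|\xi_1|^{s}|\xi_2|^{s}-\dots$ divided by the resonance function $|\xi_1|^\alpha+|\xi_2|^\alpha-|\xi_3|^\alpha-|\xi_4|^\alpha$ gains $\alpha-1$ derivatives in the high-high-low-low configuration.
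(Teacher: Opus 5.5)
\textbf{The gap: the regime $\frac d2<\alpha\le d$.} You say the argument is ``the same'' once $\sup_t\|u\|_{L^\infty}$ is replaced by the time-integrable quantity $\|u(t)\|_{L^\infty}^2$. It is not.

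In your $\alpha>d$ argument, the sublinear power $\|u\|_{H^s}^{2-\delta}$ comes from Sobolev embedding, which costs $\frac d2$ derivatives, followed by interpolation between $H^{\alpha/2}$ and $H^s$. The resulting gain is $\delta=\frac{\alpha-d}{2n+\alpha}$, which is $\le0$ when $\alpha\le d$.

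Control of $\|u\|_{L^\infty}$ alone cannot repair this. By scaling, no interpolation between $L^\infty$, $H^{\alpha/2}$ and $H^\alpha$ gives a power of $\|u\|_{H^\alpha}$ below $2$ for the quartic terms when $\alpha\le d$. The best available is $\||D|^{\alpha/2}u\|_{L^4}^2\lesssim\|u\|_{L^\infty}\|u\|_{H^\alpha}$. So already for $n=0$ you only get $|\tfrac{d}{dt}\mathcal{E}|\lesssim\|u\|_{L^\infty}^2\|u\|_{H^\alpha}^2$. The Gronwall bound is then $\exp(C\int_0^t\|u\|_{L^\infty}^2)\sim\exp(Ct^B)$, which is not polynomial.

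\textbf{The missing idea.} Strichartz gives a positive amount of regularity in $L^\infty$, and you discard it. By Remark \ref{original}, $\||D|^{\gamma-\gamma_0}u\|_{L^{2p}([0,T];L^\infty)}$ grows polynomially, with $\gamma-\gamma_0>0$. The paper combines the fractional Leibniz rule with an $L^\infty$ endpoint and the Gagliardo--Nirenberg inequalities of Lemma \ref{Gag}. This bounds, e.g., the commutator by $\|u\|_{H^\alpha}^{1-\frac{2(\gamma-\gamma_0)}{\alpha}}\|u\|_{H^{\alpha/2}}^{\frac{2(\gamma-\gamma_0)}{\alpha}}\|u\|_{W^{\gamma-\gamma_0,\infty}}$. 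The power of $\|u\|_{H^\alpha}$ is now strictly below $2$, with a coefficient that is only $L^p$ in time, and the estimate closes with Lemma \ref{Variant2}.

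For $n\ge1$, interpolating against $H^{\alpha/2}$ loses again. The paper instead uses the $H^\alpha$ bound just obtained together with $H^\alpha\hookrightarrow L^\infty$, and interpolates between $H^\alpha$ and $H^{\alpha+n}$. The gain there is $\frac{2\alpha-d}{2n}$, positive exactly because $\alpha>\frac d2$. Also, $p\ge2$ is needed because one remainder carries the coefficient $\|u\|_{L^\infty}^4$, so you need $\|u\|_{L^\infty}\in L^4_t$, not $L^2_t$.

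\textbf{The case $\alpha>d$.} Your route is a legitimate alternative to the paper's. You correct only the non-resonant interaction $u^2\,\overline{|D|^su}^{\,2}$, using a normal-form term of size $\|u\|_{L^\infty}^2\|u\|_{H^{s-\alpha/2}}^2$. You then interpolate directly between $H^{\alpha/2}$ and $H^{\alpha+n}$, which yields the gain $\theta_{\alpha,n}$ and the rate $\frac{2n+\alpha}{\alpha-d}$ in one step. The paper instead uses the corrections $J_1+J_2$ and reduces to the commutator $u|D|^\alpha\bar u+\bar u|D|^\alpha u-|D|^\alpha(|u|^2)$, to which Theorem \ref{fractional Leibniz 2} applies directly; it then passes through the $H^\alpha$ bound. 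You would still have to justify moving $|D|^{\alpha/2}$ across $u^2$ when $\alpha>2$. There the naive commutator expansion puts more than $s$ derivatives on one factor.

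\textbf{Global existence.} This step misreads the threshold. Since $\alpha>d\ge2$, $\gamma_{\alpha,p}=\frac{d\alpha}{2}$ for every $p$. So Theorem \ref{local well-posedness} needs $\gamma>\frac{d\alpha}{4}\ge\frac\alpha2$ and says nothing at $H^{\alpha/2}$. Use instead the elementary algebra-based local theory in $H^{\alpha/2}$, valid since $\frac\alpha2>\frac d2$. Alternatively, as the paper does, use the polynomial bounds themselves to rule out blow-up.
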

		\begin{rem}
			In the second part of Theorem~\ref{d}, the technical condition $p\ge2$ can in fact be relaxed to $p>1$ whenever $\frac{2d}{3}<\alpha\le d$. 
			
			More importantly, the a priori assumption \eqref{priori} is weaker than requiring polynomial growth of $\|u(t)\|_{H^{\frac{d}{2}+\varepsilon}(\T^d)}$ when $1<\alpha<2$. Indeed, if $1<\alpha<2$ and $p<\infty$, then $\frac{\gamma_{\alpha,p}}{2}<\frac{d}{2}$, so the regularity in \eqref{priori} lies strictly below the critical threshold $\frac{d}{2}$.
			
			By contrast, if one directly assumes that $\|u(t)\|_{H^{\frac{d}{2}+\varepsilon}(\T^d)}$ grows at most polynomially in time, then Sobolev embedding immediately yields polynomial growth of $\|u(t)\|_{L^{\infty}(\T^d)}$. In that situation, there would be little meaningful distinction between the cases $\alpha>d$ and $\frac{d}{2}<\alpha\le d$.
			
			Therefore, the Strichartz estimates in Theorem \ref{Strichartz main} and local well-posedness theory in Theorem \ref{local well-posedness} will allow us to derive polynomial growth of Sobolev norms under a strictly weaker assumption.
			
		\end{rem}
		\begin{rem}
		In \cite{1}, Thirouin established polynomial growth of Sobolev norms for the one-dimensional fractional nonlinear Schr\"odinger equation in the range $\alpha \in \left(\frac{2}{3},1\right)\cup(1,2)$. 
		Our argument in the case $\alpha > d$ also applies when $d=1$. In particular, this not only extends the polynomial growth result to higher dimensions, but also enlarges the admissible range in one dimension to $\alpha \in \left(\frac{2}{3},1\right)\cup(1,\infty)$. 
		The key step of the argument, which at the same time represents a novel contribution, is the use of more general fractional Leibniz rules in Appendix B, in place of the classical Kenig--Ponce--Vega estimates.
		\end{rem}
		\begin{rem}
			Our proof of Theorem \ref{d} actually fails for the more general algebraic nonlinear term $|u|^{2\sigma}u$, $\sigma\ge2$. In fact, Thirouin's arguments are restricted to the cubic nonlinearity. However, we can introduce a new modified energy to deal with the general case. For more details, we refer to Remark \ref{2sigma}.
		\end{rem}
		
		This paper is organized as follows. In Section~2, we provide a detailed proof of Theorem \ref{Strichartz main}, which plays a central role in the subsequent sections. In Section 3, a multilinear estimate will be proven, which is essential in controlling the nonlinear term. Section~4 is devoted to the proofs of local well-posedness, i.e., Theorem \ref{local well-posedness}. In Section~5, we derive polynomial growth of Sobolev norms. 
		
		Relevant useful tools, such as uniform estimates for oscillatory integrals and fractional Leibniz rules, will be introduced in Appendix A and Appendix B, respectively.

		\vspace{10pt}
		\noindent
		\textbf{Notation.}
		\begin{itemize}
			\item By $u\in C^{k}([0,T]; B)( \mbox{or} \; L^{p}([0,T];B))$ for a Banach space $B,$ we mean $u$ is a $C^{k}(\mbox{or} \; L^{p})$ map from $[0,T]$ to $B;$ see page 301 in \cite{25}.
			\item By $A\lesssim B$ (resp. $A\sim B$), we mean there is a positive constant $C$, such that $A\le CB$ (resp. $C^{-1}B\le A \le C B$). If the constant $C$ depends on $p,$ then we write $A\lesssim_{p}B$ (resp. $A\sim_{p} B$).
			\item By $A\ll B$, we mean $\frac{A}{B}$ is sufficiently smaller than $1$.
			\item By $\langle t\rangle$ for $t\in\R$, we mean $(1+|t|^2)^{\frac{1}{2}}$.
		\end{itemize}
		\section{Strichartz estimates for the fractional Schr\"odinger equation}
		Recall the Littlewood-Paley decomposition: let $\psi\in C_{c}^{\infty}(\R^d)$ satisfy $0\le \psi\le1$, $\operatorname{supp} \psi\subseteq\lbrace \frac{1}{2}<|x|<2\rbrace$ with
		$\sum_{j=1}^{\infty}\psi(2^{-j}x)\equiv1$ for all $|x|\ge2$. The Littlewood-Paley operator, for $N=2^{j}$ and $u: \T^d\to \C$, is given by 
		\begin{equation*}
			\Delta_{N}u:=\psi\left(\frac{|D|}{N}\right)u=\sum_{k\in\Z^d}e^{ikx}\psi\left(\frac{|k|}{N}\right)\widehat{u}(k),
		\end{equation*}
		where $\widehat{u}$ denotes the Fourier transform on $\T^d$. Conventionally, $\Delta_{1}u:=u-\sum_{j\ge1}\Delta_{2^{j}}u$.
		
		We first prove the following dispersive estimate for every $u\in L^{1}(\T^{d})$, $N=2^{j}$, $j\ge1$:
		\begin{equation}\label{dispersive}
			\|e^{-it|D|^{\alpha}}\Delta_Nu\|_{L^{\infty}(\T^d)}\lesssim \omega_N(t)\|u\|_{L^{1}(\T^d)}, \quad \forall t\in(-1,1),
		\end{equation} 
		where the function $\omega_N(t)$ will be determined later.
		
		Then we can apply the $TT^{\ast}$-argument and Young's inequality for convolutions to establish Strichartz estimates.
		
		Next, we write down the convolution kernel of the operator $e^{-it|D|^{\alpha}}\Delta_N$ as follows:
		\begin{equation*}
			e^{-it|D|^{\alpha}}\Delta_Nu(x)=\sum_{k\in\Z^{d}}e^{i(kx-|k|^{\alpha}t)}\widehat{\Delta_Nu}(k)
		\end{equation*}
		\vspace{-3pt}
		\begin{equation*}
			=\sum_{k\in\Z^{d}}\int_{\T^{d}}e^{i(k(x-y)-|k|^{\alpha}t)}u(y)\psi\left(\frac{k}{N}\right)dy:=(u\ast_x\kappa_N)(x,t),
		\end{equation*}
		where the convolution kernel $\kappa_N$ is defined as 
		\begin{equation*}
			\kappa_N(x,t)=\sum_{k\in\Z^{d}}e^{i(kx-|k|^{\alpha}t)}\psi\left(\frac{k}{N}\right).
		\end{equation*}
		
		Then the dispersive estimate (\ref{dispersive}) can be totally reduced to the following crucial lemma.
		\begin{lemma}
			We define $\omega_N(t)$ as follows,
			\begin{itemize}
				\item $\alpha>1$:
				\begin{equation*}
				\omega_N(t) =
					\begin{cases}
						N^d, & |t| \lesssim N^{-\alpha}, \\[8pt]
						N^{d-\frac{d\alpha}{2}} |t|^{-\frac{d}{2}}, & N^{-\alpha} \lesssim |t| \lesssim N^{1-\alpha}, \\[8pt]
						N^{\frac{d\alpha}{2}} |t|^{\frac{d}{2}}, & N^{1-\alpha} \lesssim |t| \leq 1.
					\end{cases}
				\end{equation*}
				
				\item $0<\alpha<1$: 
			\begin{equation*}
				\omega_N(t) =
				\begin{cases}
					N^d, & |t| \lesssim N^{-\alpha}, \\[8pt]
					N^{d-\frac{d\alpha}{2}} |t|^{-\frac{d}{2}}, & N^{-\alpha} \lesssim |t|\le 1.
				\end{cases}
			\end{equation*}
			\end{itemize}
			Then we have the following estimate
			\begin{equation}\label{omega}
				\|\kappa_N(\cdot,t)\|_{L^{\infty}(\T^{d})}\lesssim \omega_N(t), \quad \forall t\in (-1,1).
			\end{equation}
		\end{lemma}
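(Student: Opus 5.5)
The plan is to estimate $\kappa_N(x,t)$ by Poisson summation. We transfer the lattice sum to a sum of oscillatory integrals on $\R^d$ and then use the uniform oscillatory-integral bounds of Appendix A.

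The first regime, $|t|\lesssim N^{-\alpha}$, needs no work. The trivial bound $|\kappa_N|\le\sum_k\psi(k/N)\lesssim N^d$ already gives $\omega_N(t)=N^d$.

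For $|t|\gtrsim N^{-\alpha}$, Poisson summation on $\Z^d$ (viewing $x\in[-\pi,\pi)^d$) gives
\begin{equation*}
\kappa_N(x,t)=\sum_{m\in\Z^d}\int_{\R^d}e^{i(\xi\cdot(x+2\pi m)-|\xi|^\alpha t)}\psi\Big(\frac{\xi}{N}\Big)\,d\xi
= N^d\sum_{m\in\Z^d}\int_{\R^d}e^{iN^\alpha t\,\Phi_m(\eta)}\psi(\eta)\,d\eta ,
\end{equation*}
after the substitution $\xi=N\eta$. Here $\Phi_m(\eta)=\eta\cdot y_m-|\eta|^\alpha$ with $y_m=N^{1-\alpha}t^{-1}(x+2\pi m)$, and we set $\lambda=N^\alpha|t|\gtrsim1$. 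On $\operatorname{supp}\psi$ we have $\nabla\Phi_m=y_m-\alpha|\eta|^{\alpha-2}\eta$. The Hessian of $|\eta|^\alpha$ has eigenvalues $\alpha|\eta|^{\alpha-2}$ (tangential) and $\alpha(\alpha-1)|\eta|^{\alpha-2}$ (radial), which are nonzero and of size $\sim1$ because $\alpha\ne1$. This nondegeneracy is the role of the hypothesis $\alpha\ne1$.

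Each $m$ then falls into one of two cases. If $|y_m|\ge C$ for a large constant $C$, the phase is nonstationary with $|\nabla\Phi_m|\gtrsim|y_m|$. Repeated integration by parts then gives a contribution $O((\lambda|y_m|)^{-K})$. Otherwise $|y_m|\lesssim1$, and the uniform stationary-phase estimate from Appendix A applies. That estimate is uniform in the linear parameter $y_m$ precisely because the Hessian is uniformly nondegenerate, and it gives $|\int|\lesssim\lambda^{-d/2}$. Such a term contributes $N^d(N^\alpha|t|)^{-d/2}=N^{d-d\alpha/2}|t|^{-d/2}$.

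It remains to count the $m$ with $|y_m|\lesssim1$, i.e. $|x+2\pi m|\lesssim N^{\alpha-1}|t|$. The case split depends on $\alpha$.
\begin{itemize}
\item If $0<\alpha<1$: $N^{\alpha-1}|t|\le1$, so only $O(1)$ values of $m$ qualify. This gives $\omega_N=N^{d-d\alpha/2}|t|^{-d/2}$ on all of $N^{-\alpha}\lesssim|t|\le1$.
\item If $\alpha>1$ and $|t|\lesssim N^{1-\alpha}$: again only $O(1)$ values of $m$ qualify, giving the middle regime.
\item If $\alpha>1$ and $|t|\gtrsim N^{1-\alpha}$: about $(N^{\alpha-1}|t|)^d$ values of $m$ qualify. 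The total is $N^{d-d\alpha/2}|t|^{-d/2}\cdot N^{(\alpha-1)d}|t|^d=N^{d\alpha/2}|t|^{d/2}$, which is the third regime.
\end{itemize}
The nonstationary tail must also be summed. Since $\sum_{|y_m|\ge C}(\lambda|y_m|)^{-K}$ is controlled by an integral over $m$, and $\lambda\gtrsim1$ with $K>d$, the tail is bounded by the same count times $N^d\lambda^{-K}$. This is dominated by the main term.

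The main obstacle is the uniformity of the stationary-phase bound in the parameter $y_m$. The stationary point $\eta_*$, solving $\alpha|\eta_*|^{\alpha-2}\eta_*=y_m$, may sit near or outside the boundary of $\operatorname{supp}\psi$, or may be absent. In the intermediate zone $|y_m|\sim$ the edge of the range we need a bound of $\lambda^{-d/2}$ with constants independent of $m$, $x$, $t$, $N$. I would handle this by invoking the uniform oscillatory-integral estimate of Appendix A. That estimate bounds $|\int e^{i\lambda\Phi}a|\lesssim\lambda^{-d/2}$ uniformly over a compact family of phases with $|\det\nabla^2\Phi|\gtrsim1$ on $\operatorname{supp}a$ and bounded $C^k$ norms; our family, indexed by $y$ with $|y|\le C$, is exactly such a family. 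The only genuinely different term arises if $\psi$ is replaced by a cutoff containing $\eta=0$, and that is excluded here since $j\ge1$.
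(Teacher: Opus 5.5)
Your proposal is correct and follows essentially the same route as the paper: Poisson summation, rescaling to the phase $\lambda(v\cdot\xi-|\xi|^{\alpha})$, integration by parts for the lattice terms with $|v|\gg 1$, a uniform $\lambda^{-d/2}$ stationary-phase bound for the $O(\max\{1,(N^{\alpha-1}|t|)^{d}\})$ terms with $|v|\lesssim 1$, and counting. Your single split by $|y_m|$ merges the paper's Cases 1--2 and its $n=0$ versus $n\neq 0$ distinction, and your tail bound is in fact cleaner. The one step you only cite is the uniformity in $v$: Appendix A does not literally state a ``compact family with nondegenerate Hessian'' principle. The paper gets uniformity by applying Corollary~\ref{Q} (H\"ormander's theorem) near each critical point $\xi_v$, integrating by parts away from $\xi_v$ via the Observation that $\xi\mapsto|\xi|^{\alpha-2}\xi$ is quantitatively injective on the annulus, and then extracting a finite subcover in $v$. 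You would need to write that argument out to justify your citation.
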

		\begin{proof}
		
		To apply the oscillatory integral theory, we fix $t\in (-1,1)\setminus\{0\}$, $x\in (-\pi,\pi]^{d}$ and apply the Poisson summation formula to the following function
		\begin{equation*}
			F_{x,t}(y):=e^{i(yx-|y|^{\alpha}t)}\psi\left(\frac{y}{N}\right).
		\end{equation*}
		Then we can rewrite the convolution kernel $\kappa_N$ as 
		\begin{equation*}
			\kappa_N(x,t)=\sum_{k\in\Z^{d}}F_{x,t}(k)=\sum_{n\in\Z^{d}}\widehat{F_{x,t}}(2\pi n)=\sum_{n\in\Z^{d}}N^d\int_{\R^{d}}e^{i\phi_{n,N}(\xi)}\psi(\xi)\,d\xi:=\sum_{n\in\Z^d}I_{n,N},
		\end{equation*}
		where the phase function $\phi_{n,N}$ is given by 
		\begin{equation*}
			\phi_{n,N}(\xi):=N(x-2\pi n)\xi-N^{\alpha}t|\xi|^{\alpha}.
		\end{equation*}
		
		To prove \eqref{omega}, we first deal with the more subtle case $\alpha>1$, which is not covered in \cite{1}, and consider the following two cases.

		\medskip
		\noindent
		\textbf{Case 1:} $\bm{|t| \lesssim N^{1-\alpha}}.$ 
		
		In this case, we claim that 
		\begin{equation*}
			|\kappa_N(x,t)|\lesssim \min\left\lbrace  N^{d-\frac{d\alpha}{2}}\dfrac{1}{|t|^{d/2}},N^d \right\rbrace=:f(t), \quad \forall x\in(-\pi,\pi]^{d}, t\in (-1,1)\setminus \{0\}.
		\end{equation*}
		Note that $\operatorname{supp} \psi\subseteq \lbrace \frac{1}{2}<|\xi|<2\rbrace$, so $\xi$ is strictly away from zero. 
		
		We treat separately the cases $n=0$ and $n\neq 0$. When $n=0$, we split the argument into the following subcases:
		\begin{itemize}
			\item $|x|\sim N^{\alpha-1}|t|$:
			
			We rewrite the phase function $\phi_{0,N}(\xi)$ as follows:
			\begin{equation*}
				\phi_{0,N}(\xi)= N^{\alpha}t \left(N^{1-\alpha}\cdot\frac{x}{t}\xi-|\xi|^{\alpha}\right):=\lambda\left(v\xi-|\xi|^{\alpha}\right),
			\end{equation*}
			where we denote $\lambda:=N^{\alpha}t$, $v:=N^{1-\alpha}\cdot\frac{x}{t}$. Then, in this case, we have $|v|\sim 1$. Now, for any such $v$, the critical point $\xi_{v}$ satisfies $v=\alpha |\xi_{v}|^{\alpha-2}\xi_{v}$ and is unique. Invoking the uniform decay estimate in Lemma \ref{Q}, there exist $\delta_{v}, \varepsilon_{v}>0$ such that we have 
			\begin{equation*}
				\bigg|\int_{B(\xi_{v}, \varepsilon_{v})}e^{i\lambda (u\xi-|\xi|^{\alpha})}\psi(\xi)\,d\xi\bigg|\le C_{v}\langle\lambda\rangle^{-d/2}\le C_{v}N^{-\frac{d\alpha}{2}}\frac{1}{|t|^{d/2}}, 
			\end{equation*}
			for any $u\in B(v,\delta_{v})$, with some positive constant $C_v$ depending only on $v$. 
		
		By the fundamental theorem of calculus, we have the following observation, which also explains why our argument does not extend to the half-wave case $\alpha = 1$.
		
		\medskip
		
		\noindent\textbf{Observation.}
		If $\frac{1}{2} < |\xi_1| \le |\xi_2| < 2$ and $|\xi_1 - \xi_2| \ge c > 0$, then
		\begin{equation*}
			\Big| |\xi_1|^{\alpha-2}\xi_1 - |\xi_2|^{\alpha-2}\xi_2 \Big|
			\ge (\alpha - 1)\, 2^{-|\alpha - 2|} \, c, \quad \forall \alpha\ne1.
		\end{equation*}
		
		As a consequence, we may take $\delta_v \ll \varepsilon_v$ so as to ensure that there are no critical points outside $B(\xi_v, \varepsilon_v)$.
			
			Integrating by parts a sufficient number of times, for example $[d/2]+1$ times, we can also obtain
			\begin{equation}\label{distance}
				\bigg|\int_{B(\xi_{v}, \varepsilon_{v})^{c}}e^{i\lambda (u\xi-|\xi|^{\alpha})}\psi(\xi)\,d\xi\bigg|\le C_{v}\langle\lambda\rangle^{-d/2}\le C_{v}N^{-\frac{d\alpha}{2}}\frac{1}{|t|^{d/2}},
			\end{equation}
			uniformly in $u\in B(v,\delta_{v})$.  Combining the two estimates, we can derive the uniform bound:
			\begin{equation*}
				\bigg|\int_{\R^d}e^{i\lambda (u\xi-|\xi|^{\alpha})}\psi(\xi)\,d\xi\bigg|\le C_{v}N^{-\frac{d\alpha}{2}}\frac{1}{|t|^{d/2}}, \quad \forall u\in B(v,\delta_{v}).
			\end{equation*} 
			Now $\bigcup_{v}B(v,\delta_{v})$ forms an open covering of $\lbrace|v|\sim 1\rbrace$, so we can take a finite subcovering, which directly yields the desired bound $|I_{0,N}|\lesssim N^{d-\frac{d\alpha}{2}}\frac{1}{|t|^{d/2}}$.
			\vspace{7pt}
			\item $|x|\lesssim N^{\alpha-1}|t|$ or $|x|\gtrsim N^{\alpha-1}|t|$:
			
			A simple calculation shows that 
			\begin{equation*}
				|\nabla_{\xi}\phi_{0,N}(\xi)|=|Nx-\alpha N^{\alpha}t|\xi|^{\alpha-2}\xi|\gtrsim\max\lbrace N|x|, N^{\alpha}|t|\rbrace\ge N^{\alpha}|t|.
			\end{equation*}
			Then, applying integration by parts a sufficient number of times, we can also derive the bound $|I_{0,N}|\lesssim N^{d-\frac{d\alpha}{2}}\frac{1}{|t|^{d/2}}$.
		\end{itemize}
		Since the bound $|I_{0,N}|\lesssim N^d$ is always trivial, we can conclude that $|I_{0,N}|\lesssim f(t)$. 
		
		Now we turn to the case $n\neq 0$, which is much easier to deal with. Observe that 
		\begin{equation*}
			|\nabla_{\xi}\phi_{n,N}(\xi)|=|N(x-2\pi n)-\alpha N^{\alpha}t|\xi|^{\alpha-2}\xi|\gtrsim N|x-2\pi n|\gtrsim N|n|.
		\end{equation*}
		Then, using integration by parts a sufficient number of times, the following holds:
		\begin{equation*}
			\sum_{n\neq 0}|I_{n,N}|\lesssim \sum_{n\neq 0}N^d(N|n|)^{-d\alpha}\lesssim N^{d-d\alpha}\le f(t).
		\end{equation*}
		In conclusion, when $|t|\lesssim N^{1-\alpha}$, we have established the desired control:
		\begin{equation*}
			|\kappa_N(x,t)|\le |I_{0,N}|+\sum_{n\neq 0}|I_{n,N}|\lesssim f(t), \quad \forall x\in(-\pi,\pi]^{d}, t\in (-1,1)\setminus\{0\}.
		\end{equation*}
		
		\medskip
		\noindent
		\textbf{Case 2:} $\bm{N^{1-\alpha} \lesssim |t|<1}.$
		
		In this case, we claim that 
		\begin{equation*}
			|\kappa_N(x,t)|\lesssim \max\lbrace f(t), N^{d(\alpha-1)}|t|^{d}f(t) \rbrace = N^{d(\alpha-1)}|t|^{d}f(t)=: g(t).
		\end{equation*}
		
		When $n=0$, we still claim that $|I_{0,N}|\lesssim f(t)$. The estimate $|I_{0,N}|\lesssim N^d$ is still trivial, and we observe that
		\begin{equation*}
			|\nabla_{\xi}\phi_{0,N}(\xi)|=|Nx-\alpha N^{\alpha}t|\xi|^{\alpha-2}\xi|\gtrsim N^{\alpha}|t|,
		\end{equation*}
		which implies $|I_{0,N}|\lesssim N^{d-\frac{d\alpha}{2}}\frac{1}{|t|^{d/2}}$ by applying integration by parts a sufficient number of times.
		
		When $n\neq 0$, we consider the following subcases:
		\begin{itemize}
			\item $|n|\sim |x-2\pi n|\lesssim N^{\alpha-1}|t|$:
			
			We can rewrite the phase function $\phi_{n,N}(\xi)$ as before,
			\begin{equation*}
				\phi_{n,N}(\xi)=N^{\alpha}t\left(N^{1-\alpha}\cdot\frac{x-2\pi n}{t}\xi-|\xi|^{\alpha}\right):=\lambda (v\xi-|\xi|^{\alpha}),
			\end{equation*}
			where we denote $\lambda:=N^{\alpha}t$, $v:=N^{1-\alpha}\cdot\frac{x-2\pi n}{t}$; then $|v|\lesssim 1$. As in \textbf{Case 1}, we can use the uniform decay estimate and obtain the familiar bound $|I_{n,N}|\lesssim f(t)$, which leads to 
			\begin{equation*}
				\sum_{|n|\lesssim N^{\alpha-1}|t|} |I_{n,N}|\lesssim N^{d(\alpha-1)}|t|^{d} f(t).
			\end{equation*}
			\item $|n|\sim |x-2\pi n|\gtrsim N^{\alpha-1}|t|$:
			
			Note that, in this case, the following lower bound still holds:
			\begin{equation*}
				|\nabla_{\xi}\phi_{n,N}(\xi)|\gtrsim N|n|,
			\end{equation*}
			ensuring that 
			\begin{equation*}
				\sum_{|n|\gtrsim N^{\alpha-1}|t|}|I_{n,N}|\lesssim \sum_{n\neq 0} N^d(N|n|)^{-d\alpha}\lesssim f(t).
			\end{equation*}
		\end{itemize}
		In conclusion, we have shown, for all $x\in(-\pi,\pi]^{d}$ and $t\in (-1,1)\setminus\{0\}$,
		\begin{equation*}
			|\kappa_N(x,t)|\le |I_{0,N}|+\sum_{n\neq 0}|I_{n,N}|\lesssim \max \lbrace f(t), N^{d(\alpha-1)}|t|^{d}f(t)\rbrace = g(t).
		\end{equation*}
		Then combining the result in \textbf{Case 1}, we can take 
		\begin{equation*}
			\omega_{N}(t)=f(t)\mathbf{1}_{|t|\lesssim N^{1-\alpha}}+g(t)\mathbf{1}_{N^{1-\alpha}\lesssim|t|\le 1},
		\end{equation*}
        which yields the desired bound.
        
		For the simpler case $0<\alpha<1$, the argument in \textbf{Case 1} still holds, yielding $|\kappa_{N}(x,t)|\lesssim f(t)$. Then we can determine $\omega_{N}(t)$ as $f(t)$.
		
    	\end{proof}

		Recall that we define 
		\begin{equation*}
			\gamma_{\alpha,p}:=
			\left\{
			\begin{aligned}
				&\frac{d\alpha}{2},
				&& (\alpha\ge2)\ \text{or}\ (1<\alpha<2,\ \tfrac{2\alpha}{2-\alpha}\ge pd),
				\\
				&d-\frac{\alpha}{p},
				&& (1<\alpha<2,\ \tfrac{2\alpha}{2-\alpha}< pd),
			\end{aligned}
			\right.
		\end{equation*}
		\begin{equation*}
			\ell_{\alpha,p}:=d-\frac{\alpha}{p}.
		\end{equation*}
		
		Then a simple calculation shows that
		\begin{equation*}
			\alpha>1:\quad \|\omega_N\|_{L^p([0,1])}\sim N^{\gamma_{\alpha,p}},
			\quad 1\le p\le\infty,
		\end{equation*}
		\begin{equation*}
			0<\alpha<1:\quad \|\omega_N\|_{L^p([0,1])}\sim N^{\ell_{\alpha,p}},
			\quad 1\le p\le\infty.
		\end{equation*}
		
		To apply the $TT^{\ast}$ argument to the dispersive estimate (\ref{dispersive}), we define the operator $T_N:u\mapsto e^{-it|D|^{\alpha}}\Delta_N u$, and will prove that it maps $L^{2}(\T^d)$ to $L^{2p}([0,1]; L^{\infty}(\T^d))$. Direct calculation shows that, for $g:[0,1]\times \T^d\to \C$, 
		\begin{equation*}
			T_NT_N^{\ast}(g)(t,x)=\int_{0}^{1}\Delta_N e^{-i(t-s)|D|^{\alpha}}\Delta_N g(s,x)\,ds.
		\end{equation*}
		From the dispersive estimate (\ref{dispersive}), we derive
		\begin{equation*}
			\|T_NT_N^{\ast}(g)(t,\cdot)\|_{L^{\infty}(\T^d)}\lesssim \int_{0}^{1}\omega_{N}(t-s)\|g(s,\cdot)\|_{L^{1}(\T^d)}\,ds.
		\end{equation*}
		Recall the following Young's inequality: for 
		\begin{equation*}
			\frac{1}{q_{1}}+1=\frac{1}{q_{2}}+\frac{1}{q_{3}},
		\end{equation*}
		with $1\le q_1,q_2,q_3\le\infty$, we have 
		\begin{equation*}
			\|F\ast G\|_{L^{q_1}([0,1])}\le\|F\|_{L^{q_2}([0,1])}\|G\|_{L^{q_3}([0,1])}.
		\end{equation*}
		Then, taking $q_1:=2p$, $q_2:=p$, and $q_{3}:=(2p)'=\frac{2p}{2p-1}$, we obtain
		\begin{equation*}
			\|T_NT_N^{\ast}(g)\|_{L^{2p}([0,1]; L^{\infty}(\T^d))}\lesssim N^{\gamma_{\alpha,p}} \|g\|_{L^{(2p)'}([0,1];L^{1}(\T^d))}, \quad \alpha>1,
		\end{equation*}
		\begin{equation*}
			\|T_NT_N^{\ast}(g)\|_{L^{2p}([0,1]; L^{\infty}(\T^d))}\lesssim N^{\ell_{\alpha,p}} \|g\|_{L^{(2p)'}([0,1];L^{1}(\T^d))}, \quad 0<\alpha<1.
		\end{equation*}
		These imply our desired (local) Strichartz estimates, for $1\le p\le \infty$:
		\begin{equation}\label{Stri1}
			\|e^{-it|D|^{\alpha}}\Delta_N u\|_{L^{2p}([0,1];L^{\infty}(\T^d))}\lesssim N^{\frac{\gamma_{\alpha,p}}{2}}\|u\|_{L^{2}(\T^d)}, \quad \alpha>1,
		\end{equation}
		\begin{equation}\label{Stri2}
			\|e^{-it|D|^{\alpha}}\Delta_N u\|_{L^{2p}([0,1];L^{\infty}(\T^d))}\lesssim N^{\frac{\ell_{\alpha,p}}{2}}\|u\|_{L^{2}(\T^d)}, \quad 0<\alpha<1.
		\end{equation}
		\begin{rem}
			Note that the operator $e^{-it|D|^{\alpha}}$ is an isometry on $L^{2}(\T^d)$. Therefore, the above (local) Strichartz estimates (\ref{Stri1}) and (\ref{Stri2}) hold for any time interval $I$ of length $1$.
		\end{rem}
		Now we are ready to give a direct proof of Theorem \ref{Strichartz main}.
		\begin{proof}[Proof of Theorem \ref{Strichartz main}]
			For $\alpha>1$ and $\gamma>\frac{\gamma_{\alpha,p}}{2}$, using estimate (\ref{Stri1}), we derive
			\begin{equation*}
				\|e^{-it|D|^{\alpha}}u\|_{L^{2p}([0,1];L^{\infty}(\T^d))}\le \sum_{N}\|e^{-it|D|^{\alpha}}\Delta_N u\|_{L^{2p}([0,1];L^{\infty}(\T^d))}	\lesssim \sum_{N}N^{\frac{\gamma_{\alpha,p}}{2}}\|\Delta_{N}u\|_{L^{2}(\T^d)}
			\end{equation*}
			\vspace{-6.5pt}
			\begin{equation*}
				\le \left(\sum_{N} N^{2\gamma}\|\Delta_{N}u\|_{L^{2}(\T^d)}^{2}\right)^{\frac{1}{2}}\left(\sum_{N}N^{-2(\gamma-\frac{\gamma_{\alpha,p}}{2})}\right)^{\frac{1}{2}}\lesssim \|u\|_{H^{\gamma}(\T^d)}.
			\end{equation*}
			Similarly, for the case $0<\alpha<1$, we obtain the corresponding estimate.
			
		\end{proof}
		\begin{rem}
			Here we present an example, pointed out by Prof.~Alex Cohen, which illustrates the sharpness of our estimate in the case where $\gamma_{\alpha,p}$ (or $\ell_{\alpha,p}$) equals $d-\frac{\alpha}{p}$.
			
			In the Euclidean case $\R^d$, we consider 
			\[
			f_{\mathbb{R}^d}(x):=N^{d}\check{\psi}\left(Nx\right), \quad x\in\R^d
			\]
			where $N\ge1$ and $\psi\in C_c^{\infty}(\R^d)$ is supported on $\lbrace |\xi|\in [1,2]\rbrace$. Then $\widehat{f}_{\mathbb{R}^d}$ is a smooth bump function adapted to the frequency region $\lbrace |\xi|\in [N,2N]\rbrace$. Now we have
			\[
			e^{-it|D|^\alpha} f_{\mathbb{R}^d}(x)
			= \int_{\R^d} e^{i(\xi\cdot x - t|\xi|^\alpha)} \psi\left(\frac{\xi}{N}\right) d\xi
			\]
			\[
			= N^d \int e^{i(\xi \cdot Nx - N^{\alpha}t |\xi|^\alpha)} \psi(\xi)d\xi
			=: N^{d}K_0(Nx, N^{\alpha}t).
			\]
			
			Applying a Galilean transformation,
			\[
			|K_0(x,t)| 
			\sim \mathbf{1}_{|x|\lesssim (1+t)} \frac{1}{(1+t)^{d/2}}.
			\]
			
			Thus we see that
			\[
			\big| e^{-it|D|^\alpha} f_{\mathbb{R}^d}(x) \big|
			\sim \mathbf{1}_{|x|\lesssim (N^{-1} + N^{\alpha-1}t)}
			\frac{N^d}{(1+N^{\alpha}t)^{d/2}}.
			\]
			
			Now we turn to the torus $\T^d$ and take $u:=e^{-it|D|^{\alpha}}f_{\T^d}$, where
			\[
			f_{\mathbb{T}^d}(x):= \sum_{|k|\in[N,2N]}e^{ikx}, \quad  x\in\T^d.
			\]
			After $t=N^{1-\alpha}$, $u$ spreads out to a spatial width of $\sim 1$ and covers the entire torus $\T^d$. By conservation of mass, we obtain 
			\begin{equation*}
				\|u(t)\|_{L^{\infty}(\T^d)}\sim \|u(t)\|_{L^{2}(\T^d)}=\|f_{\T^d}\|_{L^{2}(\T^d)}\sim N^{\frac{d}{2}},
			\end{equation*}
			which implies that the magnitude stays $\sim N^{\frac{d}{2}}$. Thus on the torus $\T^d$, we expect that 
			\[
			|u(x,t)|=\big| e^{-it|D|^\alpha} f_{\mathbb{T}^d}(x) \big|
			\sim 
			\begin{cases}
				\displaystyle 
				\mathbf{1}_{|x|\lesssim (N^{-1} + N^{\alpha-1}t)}
				\frac{N^{d}}{(1+N^{\alpha}t)^{d/2}},
				& t \le N^{1-\alpha}, \\[1.2ex]
				\displaystyle N^{d/2},
				& t \ge N^{1-\alpha}.
			\end{cases}
			\]
			
			Then we obtain 
			\begin{equation}\label{3.25}
				\left\| e^{-it|D|^\alpha} f_{\mathbb{T}^d}\right\|_{L^{\infty}(\T^d)}
				\sim 
				\begin{cases}
					N^{d}, & t \in [0,N^{-\alpha}], \\[0.8ex]
					N^{d-\frac{d\alpha}{2}} t^{-d/2}, 
					& t \in [N^{-\alpha}, N^{1-\alpha}], \\[0.8ex]
					N^{d/2}, & t \in [N^{1-\alpha},\infty),
				\end{cases}
			\end{equation}
			which implies
			\vspace{-7pt} 
			\begin{equation*}
				\|e^{-it|D|^\alpha} f_{\mathbb{T}^d}\|_{L^{2p}([0,1];L^{\infty}(\T^d))}
				\sim N^{d-\frac{\alpha}{2p}}+
				\begin{cases}
					N^{\frac{d}{2}}, & \alpha>1, \\[0.8ex]
					0, & 0<\alpha<1.
				\end{cases}
			\end{equation*}
			Note that $\|f_{\T^d}\|_{H^{\gamma}(\T^d)}\sim N^{\gamma+\frac{d}{2}}$. Then we derive the necessary conditions:
			\[
			\gamma\ge 
			\begin{cases}
				\max\left\lbrace \frac{d}{2}-\frac{\alpha}{2p}, 0\right\rbrace, & \alpha>1, \\[0.8ex]
				\frac{d}{2}-\frac{\alpha}{2p}, 
				& 0<\alpha<1.
			\end{cases}
			\]
			In particular, when $p=\infty$, the lower bound $\gamma\ge \frac{d}{2}$ corresponds to the Sobolev embedding 
			\[
			H^{\frac{d}{2}+\varepsilon}(\T^d)\hookrightarrow L^{\infty}(\T^d),
			\quad \forall \varepsilon>0.
			\]
			
			We also point out that, based on the computation in (\ref{3.25}), the estimate (\ref{omega}) is in fact sharp in \textbf{Case 1}, namely when $t \in [0, N^{1-\alpha}]$. However, the estimate in \textbf{Case 2}, corresponding to $t \in [N^{1-\alpha}, 1]$, is potentially non-sharp. This is because we applied the triangle inequality to sum over all $|n| \lesssim N^{\alpha-1}|t|$, which may overlook possible cancellations.
		\end{rem}
		\section{Bourgain spaces and multi-linear estimates}
		To establish well-posedness of the fractional nonlinear Schr\"{o}dinger equation on $\T^d$, we recall the Bourgain space on the torus, which was introduced by Bourgain \cite{13}.
		\begin{defi}
			The space $X_{\alpha}^{s,b}(\R\times\T^d)$ is the closure of Schwartz functions $\mathcal{S}_{t,x}(\R\times \T^d)$ under the following norm:
			\begin{equation*}
				\|u\|_{X_{\alpha}^{s,b}(\R\times\T^d)}:=\left(\frac{1}{(2\pi)^{d}}\sum_{k\in\Z^d}\int_{\R}(1+|k|^{2})^{s}(1+|\tau+|k|^{\alpha}|^{2})^{b}|\mathcal{F}u(\tau,k)|^{2}\,d\tau\right)^{1/2},
			\end{equation*}
			where $\mathcal{F}$ denotes the Fourier transform in both time and space.
		\end{defi}
		\begin{rem}
			If we denote $H^{s,b}(\R\times\T^d):=H^{b}(\R;H^{s}(\T^d))$, then we have another representation of the Bourgain space $X_{\alpha}^{s,b}(\R\times\T^d)$: 
			\begin{equation*}
				\|u\|_{X_{\alpha}^{s,b}(\R\times\T^d)}=\|e^{it|D|^{\alpha}}u\|_{H^{s,b}(\R\times\T^d)}.
			\end{equation*}
		\end{rem}
		The Bourgain space $X_{\alpha}^{s,b}(\R\times\T^d)$ defined above is global in time, which requires the following modification, especially when establishing local well-posedness theory.
		\begin{defi}
			For $T>0$, the local Bourgain space $X_{\alpha}^{s,b}([-T,T]\times\T^d)$ consists of all functions $u$ such that there exists another function $\widetilde{u}\in X_{\alpha}^{s,b}(\R\times\T^d)$ whose restriction to $[-T,T]$ is $u$, i.e., $\widetilde{u}|_{[-T,T]}\equiv u$. The norm is given by 
			\begin{equation*}
				\|u\|_{X_{\alpha}^{s,b}([-T,T]\times\T^d)}:=\inf\left\lbrace  \|\widetilde{u}\|_{X_{\alpha}^{s,b}(\R\times\T^d)}\; \Big| \; \widetilde{u}\in X_{\alpha}^{s,b}(\R\times\T^d),\; \widetilde{u}|_{[-T,T]}\equiv u\right\rbrace.
			\end{equation*}
		\end{defi}
		
		In order to obtain (global) Strichartz estimates from local ones, we need a localization lemma for the Triebel-Lizorkin space $F_{p,q}^{r}$; see \cite{14}.
		
		Before presenting this lemma, we first recall the following definition of Triebel-Lizorkin space $F_{p,q}^{r}$. 
		\begin{defi}
			The Triebel--Lizorkin space \(F^r_{p,q}(\mathbb{R}^n)\) consists of all tempered distributions \(f \in \mathcal{S}'(\mathbb{R}^n)\) for which
			\[
			\| f \|_{F^r_{p,q}(\R^n)} := 
			\Bigl\| \Bigl( \sum_{j=0}^\infty \bigl( 2^{jr} |\Delta_j f| \bigr)^q \Bigr)^{1/q} \Bigr\|_{L^p(\mathbb{R}^n)} < \infty.
			\]
			
			If \(q=\infty\), the definition reads
			\[
			\| f \|_{F^r_{p,\infty}(\R^n)} := 
			\Bigl\| \sup_{j \ge 0} \, 2^{jr} |\Delta_j f| \Bigr\|_{L^p(\mathbb{R}^n)}.
			\]
		\end{defi}
		
		\begin{lemma}\label{Localization}
			Let \( 0 < p < \infty \), \( 0 < q \leq \infty \) or \( p = q = \infty \), 
			and let \( r \in \mathbb{R} \).  
			Let \( \psi \) be a compactly supported \( C^{\infty} \) function on \( \mathbb{R}^n \) such that
			\[
			\sum_{k \in \mathbb{Z}^n} \psi(x - k/2) = 1, \qquad \forall x \in \mathbb{R}^n.
			\]
			Then we have the equivalence
			\[
			\Bigl( \sum_{k \in \mathbb{Z}^n} \|\psi(\cdot - k/2)f\|_{F_{p,q}^r(\R^n)}^p \Bigr)^{1/p}\sim \|f\|_{F_{p,q}^{r}(\R^n)}.
			\]
			In particular, since $F_{2,2}^{r}(\R^n)=H^{r}(\R^n)$, we have 
			\[
			\Bigl( \sum_{k \in \mathbb{Z}^n} \|\psi(\cdot - k/2)f\|_{H^r(\R^n)}^2 \Bigr)^{1/2}\sim \|f\|_{H^{r}(\R^n)}.
			\]
		\end{lemma}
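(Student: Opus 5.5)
The plan is to prove the two inequalities $\lesssim$ and $\gtrsim$ separately. Set $\psi_k:=\psi(\cdot-k/2)$. Since $\psi$ has compact support, every $x\in\R^n$ lies in the support of at most $M$ of the translates $\psi_k$, with $M$ depending only on $\psi$. We fix a second cutoff $\chi\in C_c^\infty$ with $\chi\equiv1$ on $\operatorname{supp}\psi$ and set $\chi_k:=\chi(\cdot-k/2)$; these also have bounded overlap. The two tools are pointwise-multiplier estimates for $F^r_{p,q}$ and localization of the Littlewood--Paley pieces. I treat general $F^r_{p,q}$ as in \cite{14}, and the $H^r$ case then follows from $F^r_{2,2}=H^r$. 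For the $H^r$ case alone one could also argue directly on the Fourier side, or via commutator estimates when $r\ge 0$ together with duality for $r<0$. I would nevertheless follow the Littlewood--Paley route, since it gives all cases at once.

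\emph{Upper bound.} Decompose
\[
\Delta_j(\psi_k f)=\Delta_j(\psi_k\,\chi_k f).
\]
Write $\Delta_j$ as convolution with $2^{jn}\check\varphi(2^j\cdot)$, which is Schwartz. Split $f$ into $\chi_k$-localized pieces. Because $\psi_k$ is a uniformly smooth multiplier, the standard paraproduct decomposition $\psi_k f=\sum_{l}\Delta_l\psi_k\,\Delta_{\le l}f+\dots$ gives a pointwise bound:
\[
\Big(\sum_j 2^{jrq}|\Delta_j(\psi_kf)(x)|^q\Big)^{1/q}\lesssim \sum_{m\in\Z^n}\langle m\rangle^{-L}\,\mathcal M_{\theta}\Big[\Big(\sum_j 2^{jrq}|\Delta_j f|^q\Big)^{1/q}\mathbf 1_{Q_{k+m}}\Big](x).
\]
Here $Q_{k+m}$ is a unit cube near $(k+m)/2$, $\mathcal M_\theta$ is a Peetre-type maximal function with $\theta<\min(p,q)$, and $L$ is a large exponent. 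This uses the rapid decay of the kernels away from the support of $\psi_k$. We take the $L^p$ norm, raise it to the power $p$, and sum over $k$. The Fefferman--Stein vector-valued maximal inequality, applied in the form of Peetre maximal functions for $\Delta_jf$, together with $\sum_m\langle m\rangle^{-L}<\infty$ and the bounded overlap of the cubes, then gives
\[
\sum_k\|\psi_kf\|^p_{F^r_{p,q}}\lesssim\|f\|^p_{F^r_{p,q}}.
\]
The case $p=q=\infty$ is the same argument with suprema in place of sums.

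\emph{Lower bound.} Write $f=\sum_k\psi_kf$, so that $\Delta_jf=\sum_k\Delta_j(\psi_kf)$. Each $\Delta_j(\psi_k f)$ is concentrated near $\operatorname{supp}\psi_k$ and has rapidly decaying tails; again these are controlled by Peetre maximal functions of $(\Delta_i(\psi_kf))_i$ localized near $k/2$. At each $x$, only $O(1)$ of the $k$ contribute the main term, and the rest contribute $\langle x-k/2\rangle^{-L}$-weighted tails. Using $(\sum_k a_k)^{\min(1,p,q)}$-subadditivity and then the maximal inequality, we obtain
\[
\|f\|_{F^r_{p,q}}^p\lesssim\sum_k\|\psi_kf\|^p_{F^r_{p,q}}.
\]

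The main obstacle is making the localization of $\Delta_j(\psi_kf)$ rigorous uniformly in $j$, in particular at low frequencies $j=0$, where the kernel has width $\sim1$, comparable to the partition scale. This is why the tails are only summable with the weights $\langle m\rangle^{-L}$ rather than being genuinely local. For $p<1$ or $q<1$ one must use Peetre maximal functions instead of Hardy--Littlewood ones. Since this is a known result, I would ultimately cite \cite{14} for these technical details, which is the intended usage in the paper.
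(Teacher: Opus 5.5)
Your proposal is correct and matches the paper in substance: the paper gives no argument of its own for this lemma and simply invokes Triebel's localization principle from \cite{14}. Your Littlewood--Paley/Peetre-maximal-function outline (upper bound via localized pieces of $\psi_k f$, lower bound via $f=\sum_k\psi_k f$ with bounded overlap plus rapidly decaying tails, $\sup_k$ for $p=q=\infty$) is a fair sketch of that standard proof, and it is loose only in places that your final appeal to \cite{14} covers --- for example, your displayed pointwise bound puts $\mathcal M_\theta$ outside the $\ell^q$-sum, while the vector-valued Fefferman--Stein inequality you then invoke controls the $\ell^q$-sum of maximal functions.
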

		
		With this localization lemma, we can obtain the following estimates involving Bourgain spaces.
		\begin{lemma}\label{Stri3}
			For $b>\frac{1}{2}$, $\alpha>1$, and $\gamma>\frac{\gamma_{\alpha,p}}{2}$, we have 
			\begin{equation*}
				\|u\|_{L^{2p}(\R; L^{\infty}(\T^{d}))}\lesssim\|u\|_{X_{\alpha}^{\gamma,b}(\R\times\T^d)}, \quad 1\le p\le \infty.
			\end{equation*}
			Similarly, for $b>\frac{1}{2}$, $0<\alpha<1$, and $\gamma>\frac{\ell_{\alpha,p}}{2}$, the above estimate also holds.
		\end{lemma}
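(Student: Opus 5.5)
The plan is to combine the local Strichartz estimate of Theorem~\ref{Strichartz main} with a time localization, and then reassemble the pieces via Lemma~\ref{Localization}. This is the standard transference principle.

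\emph{Step 1 (interval $[0,1]$).} First prove the estimate on a single unit interval $I=[k/2,k/2+1]$. Set $F:=e^{it|D|^{\alpha}}u$, so that $\|u\|_{X^{\gamma,b}_\alpha}=\|F\|_{H^b(\R;H^\gamma)}$. Write $\widetilde F(\tau,\cdot)=\mathcal F_t F$ and use the Fourier inversion
\begin{equation*}
u(t)=\frac{1}{2\pi}\int_{\R} e^{it\tau}\,e^{-it|D|^{\alpha}}\widetilde F(\tau)\,d\tau .
\end{equation*}
The factor $e^{it\tau}$ has modulus one, so it does not affect $L^{2p}_tL^\infty_x$ norms. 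Minkowski's inequality, Theorem~\ref{Strichartz main} (which holds on any unit interval, by the Remark) and Cauchy--Schwarz in $\tau$ then give
\begin{equation*}
\|u\|_{L^{2p}(I;L^\infty)}\lesssim\int_{\R}\|\widetilde F(\tau)\|_{H^\gamma}\,d\tau\le\Big(\int\langle\tau\rangle^{-2b}d\tau\Big)^{1/2}\|F\|_{H^b(\R;H^\gamma)} .
\end{equation*}
The integral $\int\langle\tau\rangle^{-2b}d\tau$ is finite exactly because $b>\tfrac12$.

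\emph{Step 2 (localization).} Fix a partition $\sum_k\psi(t-k/2)=1$ with $\psi\in C_c^\infty$ supported in $[-1,1]$. For $p<\infty$, write
\begin{equation*}
\|u\|_{L^{2p}(\R;L^\infty)}^{2p}\le\sum_k\|u\|_{L^{2p}(I_k;L^\infty)}^{2p},
\end{equation*}
where the intervals $I_k$ cover $\R$ with bounded overlap. On each $I_k$, replace $u$ by $\psi_k u$ with $\psi_k:=\tilde\psi(\cdot-k/2)$, for a slightly fatter cutoff $\tilde\psi$ equal to $1$ on $I_k$. Applying Step~1 to $\psi_k u$ gives the bound
\begin{equation*}
\|\psi_k u\|_{X^{\gamma,b}_\alpha}=\|\psi_k F\|_{H^b(\R;H^\gamma)},
\end{equation*}
since multiplication by a function of $t$ alone commutes with $e^{it|D|^\alpha}$. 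Hence
\begin{equation*}
\|u\|_{L^{2p}L^\infty}\lesssim\Big(\sum_k\|\psi_kF\|_{H^b(H^\gamma)}^{2p}\Big)^{1/(2p)}\le\Big(\sum_k\|\psi_kF\|_{H^b(H^\gamma)}^{2}\Big)^{1/2},
\end{equation*}
where the last step uses $\ell^2\subset\ell^{2p}$.

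\emph{Step 3 (main obstacle).} The remaining task is to bound $\sum_k\|\psi_kF\|_{H^b(H^\gamma)}^2\lesssim\|F\|^2_{H^b(H^\gamma)}$. This is the $H^r$ case of Lemma~\ref{Localization}, with $n=1$ and $r=b$, applied to the vector-valued function $F$ with values in $H^\gamma(\T^d)$. I would reduce it to the scalar lemma by expanding in spatial Fourier modes:
\begin{equation*}
\|G\|^2_{H^b(H^\gamma)}=\sum_{m\in\Z^d}\langle m\rangle^{2\gamma}\|\widehat G(\cdot,m)\|^2_{H^b(\R)} .
\end{equation*}
The cutoff $\psi_k$ acts only in $t$, so it acts on each coefficient $\widehat F(\cdot,m)$ separately. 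Applying the scalar equivalence to each $\widehat F(\cdot,m)$ and summing over $m$ then yields the claim. One must check that the lemma tolerates the fattened cutoffs $\tilde\psi$. This holds because $\tilde\psi(\cdot-k/2)=\tilde\psi(\cdot-k/2)\sum_{|j-k|\le C}\psi(\cdot-j/2)$, and multiplication by a fixed smooth bounded function is bounded on $H^b$.

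\emph{Endpoint $p=\infty$.} Here take the supremum over $k$ instead of the $\ell^{2p}$ sum. The supremum is bounded by the $\ell^2$ sum, so the same estimate follows.

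\emph{Case $0<\alpha<1$.} The argument is identical, using the $\ell_{\alpha,p}$ version of Theorem~\ref{Strichartz main}.
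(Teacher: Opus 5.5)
Your proposal is correct and follows essentially the same route as the paper: Fourier inversion in time with Minkowski, the unit-interval Strichartz estimate of Theorem~\ref{Strichartz main} and Cauchy--Schwarz (using $b>\tfrac12$), then a time partition of unity, the embedding $\ell^2\subset\ell^{2p}$, and Lemma~\ref{Localization}. Your two departures are both sound: you use fattened cutoffs equal to $1$ on $I_k$ where the paper uses a finite-overlap pointwise inequality, and you explicitly reduce the $H^\gamma$-valued localization to the scalar lemma via spatial Fourier coefficients, a step the paper leaves implicit.
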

		\begin{proof}
			As usual, we only deal with the case $\alpha>1$. We first assume that $u$ is supported on some interval $I$ of length $1$. Writing $v:=e^{it|D|^{\alpha}}u$, we have 
			\begin{equation*}
				\|u\|_{L^{2p}(I; L^{\infty}(\T^{d}))}=\|e^{-it|D|^{\alpha}}v\|_{L^{2p}(I; L^{\infty}(\T^{d}))}=\frac{1}{2\pi}\left\|e^{-it|D|^{\alpha}}\int_{\R}\widehat{v}(s)e^{its}\,ds\right\|_{L^{2p}(I; L^{\infty}(\T^{d}))}
			\end{equation*}
			\begin{equation*}
				\lesssim\int_{\R}\|e^{-it|D|^{\alpha}}\widehat{v}(s)\|_{L^{2p}(I; L^{\infty}(\T^{d}))}\,ds,
			\end{equation*}
			where $\widehat{v}$ denotes the Fourier transform in time. Then, applying Theorem \ref{Strichartz main} and the Cauchy-Schwarz inequality, we obtain 
			\begin{equation*}
				\|u\|_{L^{2p}(I; L^{\infty}(\T^{d}))}\lesssim \int_{\R}\|\widehat{v}(s)\|_{H^{\gamma}(\T^d)}\,ds\lesssim\left(\int_{\R}\|\widehat{v}(s)\|_{H^{\gamma}(\T^d)}^{2}(1+|s|^2)^{b}\,ds\right)^{\frac{1}{2}}\left(\int_{\R}\frac{1}{(1+|s|^{2})^{b}}\,ds\right)^{\frac{1}{2}}
			\end{equation*}
			\begin{equation*}
				\lesssim \|v\|_{H^{\gamma,b}(\R\times \T^d)}=\|u\|_{X_{\alpha}^{\gamma,b}(\R\times\T^d)}.
			\end{equation*}
			For the general case, we can find $\psi\in C_{c}^{\infty}([0,1])$ such that
			\begin{equation*}
				\sum_{k\in\Z}\psi(t-k/2)\equiv 1, \quad 0\le \psi\le 1.
			\end{equation*}
			Since the supports of $\lbrace\psi(\cdot-k/2)\rbrace_{k\in \Z}$ are finitely overlapping, we derive that
			\begin{equation*}
				\|u\|_{L^{2p}(\R; L^{\infty}(\T^{d}))}^{2p}=\int_{\R}\left\|\sum_{k\in\Z}u(t,\cdot)\psi(t-k/2)\right\|_{L^{\infty}(\T^d)}^{2p}dt
			\end{equation*}
			\begin{equation*}
				\le \int_{\R}\left(\sum_{k\in\Z}\psi(t-k/2)\|u(t,\cdot)\|_{L^{\infty}(\T^d)}\right)^{2p}\,dt\lesssim \sum_{k\in\Z}\|\psi(t-k/2)u(t,\cdot)\|_{L^{2p}(\R; L^{\infty}(\T^{d}))}^{2p}.
			\end{equation*}
			Then, applying Lemma \ref{Localization} and the estimate for the special case, we conclude that
			\begin{equation*}
				\|u\|_{L^{2p}(\R; L^{\infty}(\T^{d}))}\lesssim \left(\sum_{k\in\Z}\|\psi(t-k/2)v(t,\cdot)\|_{H^{\gamma,b}(\R\times\T^d)}^{2p}\right)^{\frac{1}{2p}}
			\end{equation*}
			\begin{equation*}
				\le \left(\sum_{k\in\Z}\|\psi(t-k/2)v(t,\cdot)\|_{H^{\gamma,b}(\R\times\T^d)}^{2}\right)^{\frac{1}{2}}\sim \|v\|_{H^{\gamma,b}(\R\times\T^d)}=\|u\|_{X_{\alpha}^{\gamma,b}(\R\times\T^d)}.
			\end{equation*}
		\end{proof}
		\begin{rem}\label{local version}
			
		Note that for any time interval $J \subseteq \mathbb{R}$ and any extension $\widetilde{u} \in X_{\alpha}^{s,b}(\mathbb{R} \times \mathbb{T}^d)$ satisfying $\widetilde{u}|_{J} \equiv u$, we have
		\begin{equation*}
			\|u\|_{L^{2p}(J; L^{\infty}(\mathbb{T}^d))}
			\le \|\widetilde{u}\|_{L^{2p}(\mathbb{R}; L^{\infty}(\mathbb{T}^d))}
			\lesssim \|\widetilde{u}\|_{X_{\alpha}^{\gamma,b}(\mathbb{R} \times \mathbb{T}^d)}.
		\end{equation*}
		Taking the infimum over all such extensions $\widetilde{u}$, we obtain the following local version.
			
			\begin{itemize}
				\item For $b>\frac{1}{2}$, $\alpha>1$, and $\gamma>\frac{\gamma_{\alpha,p}}{2}$, we have 
				\begin{equation*}
					\|u\|_{L^{2p}(J; L^{\infty}(\T^{d}))}\lesssim\|u\|_{X_{\alpha}^{\gamma,b}(J\times\T^d)}, \quad 1\le p\le \infty.
				\end{equation*}
				Similarly, for $b>\frac{1}{2}$, $0<\alpha<1$, and $\gamma>\frac{\ell_{\alpha,p}}{2}$, the above estimate also holds.
			\end{itemize}
		\end{rem}
		
		Besides Lemma \ref{Stri3}, we also need the following inequality obtained by interpolation.
		\begin{lemma}\label{Stri4}
			For $b>\frac{1}{2p}$, we have 
			\begin{equation*}
				\|u\|_{L^{\frac{2p}{p-1}}(\R;L^{2}(\T^d))}\lesssim\|u\|_{X_{\alpha}^{0,b}(\R\times\T^d)}.
			\end{equation*}
		\end{lemma}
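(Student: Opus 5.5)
Since the spatial space is $L^2(\T^d)$ and the propagator $e^{-it|D|^{\alpha}}$ is an isometry there, the plan is to remove the dispersion entirely and reduce the statement to a one-dimensional Sobolev embedding in time. Set $v:=e^{it|D|^{\alpha}}u$, so that $\|u(t)\|_{L^2(\T^d)}=\|v(t)\|_{L^2(\T^d)}$ for every $t$, and by the remark following the definition of the Bourgain space, $\|u\|_{X_{\alpha}^{0,b}(\R\times\T^d)}=\|v\|_{H^{0,b}(\R\times\T^d)}=\|v\|_{H^{b}(\R;L^2(\T^d))}$. Hence it suffices to prove
\begin{equation*}
\|v\|_{L^{\frac{2p}{p-1}}(\R;L^{2}(\T^d))}\lesssim \|v\|_{H^{b}(\R;L^{2}(\T^d))},\qquad b>\tfrac{1}{2p}.
\end{equation*}

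For the reduced estimate, I would obtain it by interpolating between two endpoints. The first is Plancherel in time, which gives $L^2_tL^2_x=H^{0}_tL^2_x$. The second is the embedding $H^{b_1}(\R;L^2)\hookrightarrow L^\infty(\R;L^2)$ for any $b_1>\tfrac12$, proved by Fourier inversion in $t$ together with Cauchy--Schwarz against $(1+|s|^2)^{-b_1}$, exactly as in the proof of Lemma \ref{Stri3}. Complex interpolation with parameter $\theta=\tfrac1p$ then gives $H^{\theta b_1}_tL^2_x\hookrightarrow L^{q}_tL^2_x$ with $\tfrac1q=\tfrac{1-\theta}{2}=\tfrac{p-1}{2p}$, that is, $q=\tfrac{2p}{p-1}$. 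Since $b_1>\tfrac12$ is arbitrary, the exponent $\theta b_1$ can be taken to be any number larger than $\tfrac1{2p}$, which covers every $b>\tfrac1{2p}$; the range $b\ge\tfrac12$ follows trivially from monotonicity of the $H^b$ norms. Alternatively, one can bypass interpolation and cite the vector-valued Sobolev embedding $H^{b}(\R;L^2)\hookrightarrow L^q(\R;L^2)$, valid for $b\ge \tfrac12-\tfrac1q=\tfrac1{2p}$ because $L^2$ is a Hilbert space, so Plancherel and Hardy--Littlewood--Sobolev apply with the usual scalar proof.

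The cases $p=1$ and $p=\infty$ need separate checks. When $p=1$ the target is $L^\infty_t$ and the condition is $b>\tfrac12$, which is exactly the second endpoint. When $p=\infty$ the target is $L^2_t$ and the condition is $b>0$, which is immediate from Plancherel.

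The only point requiring care is justifying the interpolation of the mixed spaces $H^{b}(\R;L^2)$ and $L^q(\R;L^2)$. Both scales are Hilbert- and Bochner-valued versions of the scalar scales, so the standard complex interpolation identities $[L^2,L^\infty]_\theta=L^q$ and $[H^0,H^{b_1}]_\theta=H^{\theta b_1}$ carry over, and this is the step I would cite rather than reprove. Beyond this there is no real obstacle: the estimate is essentially the scalar Sobolev embedding in $t$.
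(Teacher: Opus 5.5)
Your proposal is correct and is essentially the paper's argument. The paper also interpolates between the identity $\|u\|_{L^{2}(\R;L^{2}(\T^d))}=\|u\|_{X_{\alpha}^{0,0}}$ and the bound $\|u\|_{L^{\infty}(\R;L^{2}(\T^d))}\lesssim\|u\|_{X_{\alpha}^{0,b}}$ for $b>\frac12$, the latter obtained by Fourier inversion and Cauchy--Schwarz. Your conjugation $v=e^{it|D|^{\alpha}}u$, which turns the statement into a vector-valued Sobolev embedding in time, only repackages the same interpolation and makes the interpolation step slightly easier to justify.
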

		\begin{proof}
			This follows from the simple fact that $\|u\|_{L^{2}(\R;L^{2}(\T^d))}=\|u\|_{X_{\alpha}^{0,0}}$ and 
			\begin{equation*}
				\|u\|_{L^{\infty}(\R;L^{2}(\T^d))}\lesssim \|u\|_{X_{\alpha}^{0,b}(\R\times\T^d)},\quad  \forall b>\frac{1}{2},
			\end{equation*}
			which can be derived from the inverse Fourier transform and the Cauchy-Schwarz inequality.
		\end{proof}
		
		To establish the local well-posedness theory for the fractional nonlinear Schr\"odinger equation (\ref{FNLS}), we also need a multilinear estimate to control the nonlinear term, where Lemma \ref{Stri3} and Lemma \ref{Stri4} will be used. The proof below follows the ideas in \cite{15,1}.
		
		\begin{prop}\label{multi}
			For $\alpha>1$, $s\ge\gamma>\frac{\gamma_{\alpha,p}}{2}$, $b>\frac{1}{2}$, $b'>\frac{1}{2p}$, and $b>b'$, we have
			\begin{equation*}
				\|u_1\overline{u}_2u_3\|_{X_{\alpha}^{s,-b'}(\R\times\T^d)}\lesssim \sum_{j=1}^{3}\left(\|u_{j}\|_{X_{\alpha}^{s,b}(\R\times\T^d)}\prod_{k\ne j}\|u_{k}\|_{X_{\alpha}^{\gamma,b}(\R\times\T^d)}\right).
			\end{equation*}
			If we replace $\gamma_{\alpha,p}$ with $\ell_{\alpha,p}$, the above multilinear estimate still holds for $0<\alpha<1$.
		\end{prop}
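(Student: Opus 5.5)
We argue by duality, in the spirit of \cite{15,1}. Since $X_{\alpha}^{s,-b'}$ is dual to $X_{\alpha}^{-s,b'}$ through the $L^2_{t,x}$ pairing, it suffices to show that
\begin{equation*}
\left|\int_{\R}\int_{\T^d} \langle D\rangle^{s}(u_1\overline{u}_2u_3)\,\overline{w}\,dx\,dt\right|\lesssim \|w\|_{X_{\alpha}^{0,b'}}\sum_{j=1}^{3}\left(\|u_{j}\|_{X_{\alpha}^{s,b}}\prod_{k\ne j}\|u_{k}\|_{X_{\alpha}^{\gamma,b}}\right)
\end{equation*}
for every $w\in X_{\alpha}^{0,b'}(\R\times\T^d)$. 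We first reduce to nonnegative Fourier coefficients. Write each factor through its space-time Fourier transform. On the Fourier side $\langle k\rangle^{s}\lesssim \sum_j\langle k_j\rangle^{s}$ whenever $k=k_1-k_2+k_3$. Hence the frequency sum splits into three pieces, and in the $j$-th piece the weight $\langle k_j\rangle^{s}$ falls on $u_j$. Each piece is bounded by the same form with $u_j$ replaced by the function $U_j$ whose Fourier transform is $\langle k\rangle^{s}|\mathcal{F}u_j|$, and with the other factors replaced by $U_k$, whose Fourier transform is $|\mathcal{F}u_k|$. Because the $X^{s,b}$ norms depend only on $|\mathcal{F}u|$, this substitution costs nothing. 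By symmetry, after possibly conjugating, it then suffices to bound a single piece, say $j=1$.

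Next we apply H\"older in physical space. The $j=1$ term is controlled by
\begin{equation*}
\|U_1\|_{L^{2}_tL^2_x}\,\|U_2\|_{L^{2p}_tL^\infty_x}\,\|U_3\|_{L^{2p}_tL^\infty_x}\,\|W\|_{L^{\frac{2p}{p-1}}_tL^2_x},
\end{equation*}
where $W$ is the function with $\mathcal F W=|\mathcal F w|$. The time exponents add up correctly, since $\frac12+\frac1{2p}+\frac1{2p}+\frac{p-1}{2p}=1$, and the space exponents satisfy $\frac12+0+0+\frac12=1$. Now we bound each factor:
\begin{itemize}
\item $\|U_1\|_{L^2_tL^2_x}\le\|u_1\|_{X^{s,0}}\le\|u_1\|_{X_\alpha^{s,b}}$;
\item the two $L^{2p}_tL^\infty_x$ factors are controlled by $\|u_k\|_{X_\alpha^{\gamma,b}}$ using Lemma \ref{Stri3}, which needs $b>\frac12$ and $\gamma>\frac{\gamma_{\alpha,p}}{2}$;
\item the $w$ factor is controlled by $\|w\|_{X_\alpha^{0,b'}}$ using Lemma \ref{Stri4}, which needs $b'>\frac1{2p}$.
\end{itemize}
This proves the estimate. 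The same argument works for $0<\alpha<1$, with $\ell_{\alpha,p}$ in place of $\gamma_{\alpha,p}$ in Lemma \ref{Stri3}. The endpoint $p=1$ is degenerate, since then $\frac{2p}{p-1}=\infty$. In that case the dual factor would need $L^\infty_tL^2_x$, which requires $b'>\frac12$. This is consistent with the hypothesis $b'>\frac1{2p}=\frac12$, and we use the $L^\infty_tL^2_x$ bound from the proof of Lemma \ref{Stri4}.

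The main obstacle is making the absolute-value reduction legitimate. The functions $U_j$ are no longer related to the $u_j$ pointwise, so we must check that Lemmas \ref{Stri3} and \ref{Stri4} are applied to the $U_j$ rather than to the $u_j$. This is harmless, because those lemmas are stated for arbitrary elements of the respective spaces and the norms coincide. The one remaining check is that the distributional (Leibniz-type) splitting of $\langle k\rangle^{s}$ among the three frequencies is compatible with the conjugation on $u_2$, where the frequency enters as $-k_2$. The hypothesis $b>b'$ does not appear to be needed for this estimate; it will presumably be used for the time-localization gain in Section 4.
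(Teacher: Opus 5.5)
Your H\"older step is wrong as written. You place $U_1\in L^2_tL^2_x$, $U_2,U_3\in L^{2p}_tL^\infty_x$ and $W\in L^{\frac{2p}{p-1}}_tL^2_x$. The reciprocal time exponents then add up to
\[
\frac12+\frac1{2p}+\frac1{2p}+\frac{p-1}{2p}=1+\frac1{2p},
\]
not to $1$. The count is right only in the limit $p=\infty$, and at $p=1$ your endpoint discussion actually gives $\frac32$. A H\"older inequality whose exponents overshoot $1$ is false; test it on $\mathbf{1}_{[0,\delta]}$ in $t$ with $\delta\to0$. So the displayed bound is not justified.

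The fix is the choice the paper makes: put the factor carrying $\langle k\rangle^{s}$ in the same space as the dual function. That is, use $\|U_1\|_{L^{\frac{2p}{p-1}}_tL^2_x}\lesssim\|U_1\|_{X_{\alpha}^{0,b}}=\|u_1\|_{X_{\alpha}^{s,b}}$, which is Lemma \ref{Stri4} applied with $b>\frac12\ge\frac1{2p}$. The time exponents then sum to $\frac{p-1}{2p}+\frac1{2p}+\frac1{2p}+\frac{p-1}{2p}=1$, and the space exponents are unchanged. At $p=1$, both $U_1$ and $W$ sit in $L^\infty_tL^2_x$, which is covered by $b,b'>\frac12$.

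With this correction your argument is valid, and it takes a genuinely different route from the paper.

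The paper's route is dyadic:
\begin{enumerate}
\item It decomposes all four functions dyadically in frequency $N_j$ and modulation $L_j$, and reduces to $N_0\lesssim N_1=\max_j N_j$.
\item It applies the same H\"older/Strichartz bound to each block, with slightly lowered indices $\gamma_0<\gamma$, $b_0<b$, $b_0'<b'$. This produces negative powers of $N_2,N_3$ and of the $L_j$.
\item It sums, using Cauchy--Schwarz together with the factor $(N_0/N_1)^s$ for the $N_0,N_1$ sums.
\end{enumerate}

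You instead distribute the weight via $\langle k\rangle^s\lesssim\sum_j\langle k_j\rangle^s$ and pass to functions with nonnegative space-time Fourier transforms. This is legitimate for three reasons: the $X_\alpha^{s,b}$ norms depend only on $|\mathcal F u|$; the conjugation on $u_2$ is harmless since $\langle -k\rangle=\langle k\rangle$; and Lemmas \ref{Stri3} and \ref{Stri4} apply to arbitrary elements of the spaces.

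Your route needs no dyadic summation and no $\varepsilon$-loss in the indices. It also confirms your remark that $b>b'$ is superfluous for this estimate. In the paper, that hypothesis enters only because $u_1^{L_1N_1}$ is placed in $X_\alpha^{0,b_0'}$, so summability of $L_1^{b_0'-b}$ is needed. The dyadic bookkeeping would become essential only if the blockwise estimates carried frequency-dependent constants, which is not the case here.
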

		\begin{proof}
			As usual, we only consider the case $\alpha>1$, since the case $0<\alpha<1$ is similar. By duality, the multilinear estimate is reduced to 
			\begin{equation*}
				\left|\int_{\R\times\T^d}u_1\overline{u}_2u_3\overline{u}_0 \; dxdt\right|\lesssim\sum_{j=1}^{3}\|u_{0}\|_{X_{\alpha}^{-s,b'}(\R\times\T^d)} \left(\|u_{j}\|_{X_{\alpha}^{s,b}(\R\times\T^d)}\prod_{k\ne j}\|u_{k}\|_{X_{\alpha}^{\gamma,b}(\R\times\T^d)}\right).
			\end{equation*}
			Next, we define, for $j=1,2,3$,
			\begin{equation*}
				\mathcal{F}\omega_{j}^{(\gamma)}(\tau,k):=(1+|k|^{2})^{\frac{\gamma}{2}}(1+|\tau+|k|^{\alpha}|^{2})^{\frac{b}{2}}\mathcal{F}u_{j}(\tau,k),
			\end{equation*}
			\begin{equation*}
				\mathcal{F}\omega_{j}^{(s)}(\tau,k):=(1+|k|^{2})^{\frac{s}{2}}(1+|\tau+|k|^{\alpha}|^{2})^{\frac{b}{2}}\mathcal{F}u_{j}(\tau,k),
			\end{equation*}
			and 
			\begin{equation*}
				\mathcal{F}\omega_{0}(\tau,k):=(1+|k|^{2})^{-\frac{s}{2}}(1+|\tau+|k|^{\alpha}|^{2})^{\frac{b'}{2}}\mathcal{F}u_{0}(\tau,k).
			\end{equation*}
			
			From the definition of the Bourgain space, we can rewrite the right-hand side of the above estimate as 
			\begin{equation*}
				\sum_{j=1}^{3}\|\omega_{0}\|_{L^{2}(\R\times\T^d)}\left(\|\omega_{j}^{(s)}\|_{L^{2}(\R\times\T^d)}\prod_{k\ne j}\|\omega_{k}^{(\gamma)}\|_{L^{2}(\R\times\T^d)}\right),
			\end{equation*}
			
			Then we dyadically decompose $u_{j}$ and $u_{0}$ in both time and space. For dyadic integers $L_0, L_j, N_0, N_j$, we define, for $j=1,2,3$,
			\begin{equation*}
				u_{j}^{L_{j} N_{j}}(t,x):=\frac{1}{(2\pi)^{d}}\sum_{|k|\sim N_{j}}e^{ikx}\int_{|\tau+|k|^{\alpha}|\sim L_{j}}\mathcal{F}u_{j}(\tau,k)e^{it\tau}\,d\tau,
			\end{equation*}
			\begin{equation*}
				u_{0}^{L_{0} N_{0}}(t,x):=\frac{1}{(2\pi)^{d}}\sum_{|k|\sim N_{j}}e^{ikx}\int_{|\tau+|k|^{\alpha}|\sim L_{j}}\mathcal{F}u_{0}(\tau,k)e^{it\tau}\,d\tau.
			\end{equation*}
			Given $\beta, \sigma\in\R$, we can explicitly write down the $X_{\alpha}^{\sigma,\beta}$-norm for $j=1,2,3$:
			\begin{equation*}
				\left\|u_{j}^{L_{j} N_{j}}\right\|_{X_{\alpha}^{\sigma,\beta}(\R\times\T^d)}^{2}=\frac{1}{(2\pi)^{d}}\sum_{|k|\sim N_{j}}\int_{|\tau+|k|^{\alpha}|\sim L_{j}}(1+|k|^{2})^{\sigma-\gamma}(1+|\tau+|k|^{\alpha}|^{2})^{\beta-b}|\mathcal{F}\omega_{j}^{(\gamma)}(\tau,k)|^2\,d\tau
			\end{equation*}
			\begin{equation*}
				\lesssim N_{j}^{2(\sigma-\gamma)}L_{j}^{2(\beta-b)}\sum_{|k|\sim N_j}\int_{|\tau+|k|^{\alpha}|\sim L_{j}}|\mathcal{F}\omega_{j}^{(\gamma)}(\tau,k)|^2\,d\tau.
			\end{equation*}
			We further denote, for $j=1,2,3$,
			\begin{equation*}
				c_{j}^{\gamma}(L_j, N_j)^2:=\sum_{|k|\sim N_j}\int_{|\tau+|k|^{\alpha}|\sim L_{j}}|\mathcal{F}\omega_{j}^{(\gamma)}(\tau,k)|^2\,d\tau.
			\end{equation*}
		
			Thus we have the estimate 
			\begin{equation*}
				\left\|u_{j}^{L_j N_j}\right\|_{X_{\alpha}^{\sigma,\beta}(\R\times\T^d)}\lesssim N_{j}^{\sigma-\gamma}L_{j}^{\beta-b}c_{j}^{(\gamma)}(L_j,N_j),
			\end{equation*}
			with the relations
			\begin{equation*}
				\sum_{L_j,N_j}c_{j}^{(\gamma)}(L_j,N_j)^{2}\sim \|\omega_{j}^{(\gamma)}\|_{L^{2}(\R\times\T^d)}^2.
			\end{equation*}
			Similarly, we also denote, for $j=1,2,3$, 
			\begin{equation*}
				c_{j}^{(s)}(L_j, N_j)^2:=\sum_{|k|\sim N_j}\int_{|\tau+|k|^{\alpha}|\sim L_{j}}|\mathcal{F}\omega_{j}^{(s)}(\tau,k)|^2\,d\tau
			\end{equation*}
			and 
			\begin{equation*}
				c_{0}(L_0, N_0)^2:=\sum_{|k|\sim N_0}\int_{|\tau+|k|^{\alpha}|\sim L_{0}}|\mathcal{F}\omega_{0}(\tau,k)|^2\,d\tau.
			\end{equation*}
			Then we can obtain
			\begin{equation*}
				\left\|u_{j}^{L_j N_j}\right\|_{X_{\alpha}^{\sigma,\beta}(\R\times\T^d)}\lesssim N_{j}^{\sigma-s}L_{j}^{\beta-b}c_{j}^{(s)}(L_j,N_j),
			\end{equation*}
			\begin{equation*}
				\left\|u_{0}^{L_0 N_0}\right\|_{X_{\alpha}^{\sigma,\beta}(\R\times\T^d)}\lesssim N_{0}^{\sigma+s}L_{0}^{\beta-b'}c_{0}(L_0,N_0),
			\end{equation*}
			with the corresponding relations.
			
			We further denote $L=(L_1,L_2,L_3, L_0)$, $N=(N_1,N_2,N_3,N_0)$ and 
			\begin{equation*}
				I(L,N):=\left|\int_{\R\times\T^d}u_{1}^{L_1 N_1}\overline{u_{2}^{L_2 N_2}}u_{3}^{L_3 N_3}\overline{u_{0}^{L_0 N_0}}\,dxdt\right|.
			\end{equation*}
			Note that when $\max\lbrace N_1,N_2,N_3\rbrace\ll N_0$, $I(L,N)$ will vanish, so we can assume that $N_0\lesssim \max\lbrace N_1,N_2,N_3\rbrace$. By symmetry, we also suppose that $N_1=\max\lbrace N_1,N_2,N_3\rbrace$.
			
			Now we take $\gamma_{0}\in (\frac{\gamma_{\alpha,p}}{2}, \gamma)$, $b_{0}\in (\frac{1}{2}, b)$, and $b_{0}'\in (\frac{1}{2p}, b')$. Then, applying Lemma \ref{Stri3}, Lemma \ref{Stri4}, and H\"{o}lder's inequality, we obtain
			\begin{equation*}
				I(L,N)\le \left\|u_{1}^{L_1 N_1}\right\|_{L^{\frac{2p}{p-1}}(\R; L^{2}(\T^d))}\left\|u_{2}^{L_2 N_2}\right\|_{L^{2p}(\R; L^{\infty}(\T^d))}\left\|u_{3}^{L_3 N_3}\right\|_{L^{2p}(\R; L^{\infty}(\T^d))}\left\|u_{0}^{L_0 N_0}\right\|_{L^{\frac{2p}{p-1}}(\R; L^{2}(\T^d))}
			\end{equation*}
			\begin{equation*}
				\lesssim \left\|u_{1}^{L_1 N_1}\right\|_{X_{\alpha}^{0,b_0'}(\R\times\T^d)}\left\|u_{2}^{L_2 N_2}\right\|_{X_{\alpha}^{\gamma_0,b_0}(\R\times\T^d)}\left\|u_{3}^{L_3 N_3}\right\|_{X_{\alpha}^{\gamma_0,b_0}(\R\times\T^d)}\left\|u_{0}^{L_0 N_0}\right\|_{X_{\alpha}^{0,b_0'}(\R\times\T^d)}
			\end{equation*}
			\begin{equation*}
				\begin{aligned}
					\lesssim\;&
					L_1^{b_0'-b} N_2^{\gamma_0-\gamma}
					L_2^{b_0-b} N_3^{\gamma_0-\gamma}
					L_3^{b_0-b}
					\left(\frac{N_0}{N_1}\right)^s
					L_0^{b_0'-b_0}
					\\
					&\times
					c_1^{(s)}(L_1,N_1)\,
					c_2^{(\gamma)}(L_2,N_2)\,
					c_3^{(\gamma)}(L_3,N_3)\,
					c_0(L_0,N_0).
				\end{aligned}
			\end{equation*}
			Notice that the exponents on $L_1$, $N_2$, $L_2$, $N_3$, $L_3$, and $L_0$ are all negative. Summing over those variables yields
			\begin{equation*}
				\sum_{L_0,L_1} \sum_{N_2, L_2, N_3, L_3}I(L,N)\lesssim\left( \sum_{L_0,L_1}L_0^{b_0'-b_0}L_1^{b_0'-b}c_1^{(s)}(L_1,N_1)c_0(L_0,N_0)\right)
			\end{equation*}
			\begin{equation*}
				\times \left(\frac{N_0}{N_1}\right)^s \|\omega_{2}^{(\gamma)}\|_{L^{2}(\R\times\T^{d})}\|\omega_{3}^{(\gamma)}\|_{L^{2}(\R\times\T^{d})},
			\end{equation*}
			where we used $c_2^{(\gamma)}(L_2,N_2)\lesssim \|\omega_{2}^{(\gamma)}\|_{L^{2}(\R\times\T^{d})}$ and $c_3^{(\gamma)}(L_3,N_3)\lesssim \|\omega_{3}^{(\gamma)}\|_{L^{2}(\R\times\T^{d})}$.
			
			Then, applying the Cauchy-Schwarz inequality, we obtain
			\begin{equation*}
				\lesssim \left(\frac{N_0}{N_1}\right)^s \left(\sum_{L_0}c_{0}(L_0,N_0)^{2}\right)^\frac{1}{2}\left(\sum_{L_1}c_{1}^{(s)}(L_1,N_1)^{2}\right)^\frac{1}{2}\|\omega_{2}^{(\gamma)}\|_{L^{2}(\R\times\T^{d})}\|\omega_{3}^{(\gamma)}\|_{L^{2}(\R\times\T^{d})}.
			\end{equation*}
			Note that $N_0\lesssim N_{1}$. Hence we can find an integer $M$ such that $N_1=2^{j}N_0$ for some $j\ge -M$. Finally, summing over $N_0$ and $N_1$, we obtain
			\begin{equation*}
				\sum_{L,N}I(L,N)\lesssim \sum_{j=-M}^{\infty}\sum_{N_0}2^{-js}\left(\sum_{L_0}c_{0}(L_0,N_0)^{2}\right)^\frac{1}{2}\left(\sum_{L_1}c_{1}^{(s)}(L_1,2^{j}N_0)^{2}\right)^\frac{1}{2}\|\omega_{2}^{(\gamma)}\|_{L^{2}(\R\times\T^{d})}\|\omega_{3}^{(\gamma)}\|_{L^{2}(\R\times\T^{d})}
			\end{equation*}
			\begin{equation*}
				\lesssim \sum_{j=-M}^{\infty}2^{-js}\left(\sum_{L_0, N_0}c_{0}(L_0,N_0)^{2}\right)^\frac{1}{2}\left(\sum_{L_1, N_0}c_{1}^{(s)}(L_1,2^{j}N_0)^{2}\right)^\frac{1}{2}\|\omega_{2}^{(\gamma)}\|_{L^{2}(\R\times\T^{d})}\|\omega_{3}^{(\gamma)}\|_{L^{2}(\R\times\T^{d})}
			\end{equation*}
			\begin{equation*}
				\lesssim \|\omega_{0}\|_{L^{2}(\R\times\T^{d})}\|\omega_{1}^{(s)}\|_{L^{2}(\R\times\T^{d})}\|\omega_{2}^{(\gamma)}\|_{L^{2}(\R\times\T^{d})}\|\omega_{3}^{(\gamma)}\|_{L^{2}(\R\times\T^{d})}.
			\end{equation*}
			Thus we have proven Proposition \ref{multi}.
		\end{proof}
		\begin{rem}\label{2sigma+1}
			By slightly modifying the above proof, we can extend it to the following $(2\sigma+1)$-linear estimate for $\sigma\ge2$:
		\end{rem}
		\begin{itemize}
			\item
			For $\alpha>1$, $p\ge \sigma$, $s\ge\gamma>\frac{\gamma_{\alpha,p}}{2}$, $b>\frac{1}{2}$, $b'>\frac{\sigma}{2p}$, and $b>b'$, we have
			\begin{equation*}
				\|u_1\overline{u}_2\cdots u_{2\sigma-1}\overline{u}_{2\sigma}u_{2\sigma+1}\|_{X_{\alpha}^{s,-b'}(\R\times\T^d)}\lesssim \sum_{j=1}^{2\sigma+1}\left(\|u_{j}\|_{X_{\alpha}^{s,b}(\R\times\T^d)}\prod_{i\ne j}\|u_{i}\|_{X_{\alpha}^{\gamma,b}(\R\times\T^d)}\right).
			\end{equation*}
			If we replace $\gamma_{\alpha,p}$ with $\ell_{\alpha,p}$, the above estimate still holds for $0<\alpha<1$.
		\end{itemize}
		This estimate can be applied to the fractional nonlinear Schr\"{o}dinger equation with algebraic nonlinearity.
		
		\section{Well-poseness theory of fractional nonlinear Schr\"{o}dinger equation}
		
		To prove Theorem \ref{local well-posedness}, we consider the framework of contraction mapping with the following map $\Phi$:
		\begin{equation}\label{Phi}
			\Phi: u\mapsto \varphi(t)e^{-it|D|^{\alpha}}u_{0}-i\varphi\left(\frac{t}{T}\right)\int_{0}^{t}e^{-i(t-\tau)|D|^{\alpha}}|u(\tau)|^2u(\tau)\,d\tau,
		\end{equation} 
		where $\varphi\in C_{c}^{\infty}(\R)$ satisfies $\varphi(t)\equiv1$ for all $t\in [-1,1]$.
		
		Besides, we need a useful lemma from Ginibre \cite{21}, which connects the second term in (\ref{Phi}) with Proposition \ref{multi}.
		\begin{lemma}\label{Gini}
			Let $0\le b'<\frac{1}{2}$, $0<b<1-b'$, and $T \le 1$. Then we have 
			\begin{equation*}
				\left\|\varphi\left(\frac{t}{T}\right)\int_{0}^{t}e^{-i(t-s)|D|^{\alpha}}f(\tau)\,d\tau\right\|_{X_{\alpha}^{\gamma,b}(\R\times\T^d)} \le C\, T^{1-(b+b')} \|f\|_{X_{\alpha}^{\gamma,-b'}(\R\times\T^d)},
			\end{equation*}
			for all $s \in \mathbb{R}$, with the same constant $C>0$.
		\end{lemma}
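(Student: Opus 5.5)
The estimate is linear in $f$ and acts diagonally on each spatial Fourier mode, so I would reduce it to a one-dimensional inequality in time. Set $v(t):=e^{it|D|^{\alpha}}f(t)$. Then
\[
e^{it|D|^{\alpha}}\Big(\varphi(t/T)\int_0^t e^{-i(t-\tau)|D|^{\alpha}}f(\tau)\,d\tau\Big)=\varphi(t/T)\int_0^t v(\tau)\,d\tau .
\]
By the remark identifying $\|u\|_{X_{\alpha}^{s,b}}=\|e^{it|D|^{\alpha}}u\|_{H^{s,b}}$, both sides become norms on $v$, and for each fixed $k\in\Z^d$ the weight $(1+|k|^2)^{\gamma}$ factors out. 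It therefore suffices to prove, for scalar functions $g$ on $\R$,
\[
\Big\|\varphi(t/T)\int_0^t g(\tau)\,d\tau\Big\|_{H^b(\R)}\le C\,T^{1-b-b'}\|g\|_{H^{-b'}(\R)},
\]
with $C$ independent of $T\le 1$ and $g$. Summing the squares over $k$ then gives the lemma with the same constant. The phrase ``for all $s$'' just records that the Sobolev index plays no role.

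To prove the scalar estimate, I would write $\int_0^t g=\frac{1}{2\pi}\int \widehat g(\lambda)\frac{e^{it\lambda}-1}{i\lambda}\,d\lambda$ and split $\widehat g$ into the regions $|\lambda|T\le1$ and $|\lambda|T>1$.

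\emph{Low frequencies.} Expand $\frac{e^{it\lambda}-1}{i\lambda}=\sum_{n\ge1}\frac{(it)^n\lambda^{n-1}}{n!}$. Each term contributes $\varphi(t/T)t^n$ times $\int_{|\lambda|\le1/T}\lambda^{n-1}\widehat g$. Use $\|\varphi(t/T)t^n\|_{H^b}\lesssim T^{n+1/2-b}C^n$. Estimate the $\lambda$-integral by Cauchy–Schwarz against the weight $\langle\lambda\rangle^{-b'}$; this gives $\lesssim T^{-(n-1)-1/2+b'}\|g\|_{H^{-b'}}$ (for $n=1$ one uses $b'<1/2$ to make $\langle\lambda\rangle^{b'}\mathbf{1}_{|\lambda|\le1/T}$ square integrable with the right power). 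The powers combine to $T^{1-b-b'}$, and the factorials make the series in $n$ summable.

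\emph{High frequencies.} Split $\frac{e^{it\lambda}-1}{i\lambda}$ into two pieces.

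The first piece is $\varphi(t/T)\,\mathcal F^{-1}\!\big[\widehat g(\lambda)\mathbf{1}_{|\lambda|T>1}/(i\lambda)\big]$. Multiplication by $\varphi(\cdot/T)$ is bounded on $H^b$ with norm $\lesssim T^{-b}$ for $T\le1$, after checking with $0<b<1$. So this piece is bounded by $T^{-b}\big\|\langle\lambda\rangle^{b}\lambda^{-1}\widehat g\,\mathbf{1}_{|\lambda|>1/T}\big\|_{L^2}$. Using $\langle\lambda\rangle^{b+b'-1}\le T^{1-b-b'}$ on that region (valid since $b+b'<1$) gives at most $T^{1-2b-b'}$. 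That is too weak, so here I would instead use the sharper product estimate $\|\varphi(t/T)h\|_{H^b}\lesssim \|h\|_{H^b}+T^{1/2-b}\|h\|_{L^\infty}$. Alternatively, I would handle it in the homogeneous $\dot H^b$ norm plus $L^2$ separately, where the $\dot H^b$ part is scale invariant.

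The second piece is $\varphi(t/T)\int_{|\lambda|>1/T}\widehat g/(i\lambda)$. This is $\|\varphi(\cdot/T)\|_{H^b}\lesssim T^{1/2-b}$ times a constant bounded by Cauchy–Schwarz: $\int_{|\lambda|>1/T}|\lambda|^{-2+2b'}\,d\lambda\sim T^{1-2b'}$, which yields $T^{1/2-b'}$ and hence exactly $T^{1-b-b'}$ overall.

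I expect the main obstacle to be the first high-frequency piece, namely getting the exact power $T^{1-b-b'}$ uniformly rather than a lossy one. My first attempt would be to reduce to homogeneous norms. For $\dot H^b$, rescaling $t\mapsto Tt$ makes the truncated estimate scale exactly and pins down the power. The inhomogeneous $L^2$ part then carries an extra positive power of $T$ and is harmless.
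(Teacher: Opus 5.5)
The paper does not prove this lemma; it quotes it from Ginibre's notes. Your argument is the standard proof of that estimate, and it goes through: you reduce to the scalar bound $\|\varphi(t/T)\int_0^t g\|_{H^b}\le C\,T^{1-b-b'}\|g\|_{H^{-b'}}$, split into $|\lambda|T\le 1$ versus $|\lambda|T>1$, Taylor expand at low frequency, and at high frequency write $\int_0^t g_{\mathrm{high}}=h(t)-h(0)$ with $\widehat h=\widehat g\,\mathbf{1}_{|\lambda|>1/T}/(i\lambda)$. A few details need correcting.

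In the low-frequency step, the Cauchy--Schwarz factor is $\bigl(\int_{|\lambda|\le 1/T}|\lambda|^{2n-2}\langle\lambda\rangle^{2b'}\,d\lambda\bigr)^{1/2}\lesssim T^{-(n-1)-1/2-b'}$. With the sign $+b'$ that you wrote, the powers would combine to $T^{1-b+b'}$, not $T^{1-b-b'}$. The hypothesis $b'<\tfrac12$ plays no role there, since the region is bounded. It is used in the tail integral $\int_{|\lambda|>1/T}|\lambda|^{2b'-2}\,d\lambda\sim T^{1-2b'}$.

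For the piece $\varphi(t/T)h$, your product estimate $\|\varphi(\cdot/T)h\|_{H^b}\lesssim\|h\|_{H^b}+T^{1/2-b}\|h\|_{L^\infty}$ is valid for $0\le b\le 1$. One way to see it is from the Gagliardo seminorm together with $\|\varphi(\cdot/T)\|_{\dot H^b}=T^{1/2-b}\|\varphi\|_{\dot H^b}$. It closes the argument once you record two bounds:
\begin{itemize}
\item $\|h\|_{H^b}\lesssim T^{1-b-b'}\|g\|_{H^{-b'}}$, because $b+b'<1$ and $\langle\lambda\rangle\ge 1/T$ on the support;
\item $\|h\|_{L^\infty}\le\frac{1}{2\pi}\|\widehat h\|_{L^1}\lesssim T^{1/2-b'}\|g\|_{H^{-b'}}$, which is the same Cauchy--Schwarz you used for the constant $h(0)$.
\end{itemize}

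Your rescaling alternative also works. After $t\mapsto Tt$, the function $h(T\cdot)$ has frequencies in $|\mu|>1$. There the $\dot H^b$ and $H^b$ norms are comparable, and multiplication by the fixed $\varphi$ is bounded on $H^b$. Together these give $\|\varphi(\cdot/T)h\|_{\dot H^b}\lesssim\|h\|_{\dot H^b}$, while the $L^2$ part gains an extra factor $T^{b}$.
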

		
		Now we are ready to establish the local well-posedness of the fractional nonlinear Schr\"{o}dinger equation (\ref{FNLS}).
		\begin{proof}[Proof of Theorem \ref{local well-posedness}]
			Consider the ball $B_R:=\left\lbrace u\in X_{\alpha}^{\gamma,b}(\R\times\T^d)\;\Big|\; \|u\|_{X_{\alpha}^{\gamma,b}(\R\times\T^d)}\le R\|u_0\|_{H^{\gamma}(\T^d)}\right\rbrace$, where $R>0$ will be determined later.
			
			We need to show that the map $\Phi$ is a contraction on $B_R$. First, we verify that $\Phi$ maps $B_R$ into $B_R$. 
			
			From the assumption $\frac{1}{2}<b<1-\frac{1}{2p}$, we can find $b'$ such that all the conditions in Proposition \ref{multi} and Lemma \ref{Gini} are satisfied. Therefore, for $u\in B_R$, we deduce that
			\begin{equation*}
				\|\Phi(u)\|_{X_{\alpha}^{\gamma,b}(\R\times\T^d)}
				\le \|\varphi\|_{H^{b}(\R)}\|u_0\|_{H^{\gamma}(\T^d)}+C T^{1-(b+b')}\||u|^{2}u\|_{X_{\alpha}^{\gamma,-b'}(\R\times\T^d)}
			\end{equation*}
			\begin{equation*}
				\le \|\varphi\|_{H^{b}(\R)}\|u_0\|_{H^{\gamma}(\T^d)}+ \widetilde{C}T^{1-(b+b')}\|u\|_{X_{\alpha}^{\gamma,b}(\R\times\T^d)}^{3}.
			\end{equation*}
			Choosing $R>\|\varphi\|_{H^{b}(\R)}$ and $T=T(\|u_{0}\|_{H^{\gamma}(\T^d)})$ sufficiently small, we can ensure $\|\Phi(u)\|_{X_{\alpha}^{\gamma,b}(\R\times\T^d)}\le R\|u_0\|_{H^{\gamma}(\T^d)}$, i.e., $\Phi(u)\in B_R$.
			
			Next, we show that $\Phi$ is a contraction. Given $u, v\in B_R$, we apply Proposition \ref{multi} and Lemma \ref{Gini} again to obtain 
			\begin{equation*}
				\|\Phi(u)-\Phi(v)\|_{X_{\alpha}^{\gamma,b}(\R\times\T^d)}
			\end{equation*}
			\begin{equation*}
				=\left\|\varphi\left(\frac{t}{T}\right)\int_{0}^{t}e^{-i(t-s)|D|^{\alpha}}\big[|u(\tau)|^2u(\tau)-|v(\tau)|^2v(\tau)\big]\,d\tau\right\|_{X_{\alpha}^{\gamma,b}(\R\times\T^d)}
			\end{equation*}
			\begin{equation*}
				\le C T^{1-(b+b')}\left\||u(\tau)|^2u(\tau)-|v(\tau)|^2v(\tau)\right\|_{X_{\alpha}^{\gamma,-b'}(\R\times\T^d)}
			\end{equation*}
			\begin{equation*}
				\le\widetilde{C} T^{1-(b+b')}R^{2}\|u_{0}\|_{H^{\gamma}(\T^d)}^2\|u-v\|_{X_{\alpha}^{\gamma,b}(\R\times\T^d)}.
			\end{equation*}
			Then, taking $T$ sufficiently small, we can guarantee that $\Phi$ is a contraction.
			
			Hence, there exists a unique fixed point $u\in B_R$, i.e.,
			\begin{equation*}
				u= \varphi(t)e^{-it|D|^{\alpha}}u_{0}-i\varphi\left(\frac{t}{T}\right)\int_{0}^{t}e^{-i(t-\tau)|D|^{\alpha}}|u(\tau)|^2u(\tau)\,d\tau,
			\end{equation*}
			which implies that $u$ is a solution to the nonlinear Schr\"odinger equation (\ref{FNLS}) on $[-T,T]$, with $\|u\|_{X_{\alpha}^{\gamma,b}([-T,T]\times\T^d)}\le R\|u_0\|_{H^{\gamma}(\T^d)}$. Uniqueness follows directly from the contraction and a continuity argument.
			
			If $u_0\in H^{s}(\T^d)$ with $s>\gamma$, then we define 
			\begin{equation*}
				E_R:=\left\lbrace u\;\big|\; \|u\|_{X_{\alpha}^{\gamma,b}(\R\times\T^d)}\le R\|u_0\|_{H^{\gamma}(\T^d)}\right\rbrace\cap \left\lbrace u\;\big|\; \|u\|_{X_{\alpha}^{s,b}(\R\times\T^d)}\le R\|u_0\|_{H^{s}(\T^d)}\right\rbrace
			\end{equation*}
			and apply the above procedure again. This yields, for $u\in E_R$,
			\begin{equation*}
				\|\Phi(u)\|_{X_{\alpha}^{s,b}(\R\times\T^d)}\le \|\varphi\|_{H^{b}(\R)}\|u_0\|_{H^{s}(\T^d)}+\widetilde{C}T^{1-(b+b')}\|u\|_{X_{\alpha}^{\gamma,b}(\R\times\T^d)}^{2}\|u\|_{X_{\alpha}^{s,b}(\R\times\T^d)}
			\end{equation*}
			\begin{equation*}
				\le\left(\|\varphi\|_{H^{b}(\R)}+\widetilde{C}R^3T^{1-(b+b')}\|u_0\|_{H^{\gamma}(\T^d)}^2\right)\|u_0\|_{H^{s}(\T^d)},
			\end{equation*}
			\begin{equation*}
				\|\Phi(u)\|_{X_{\alpha}^{\gamma,b}(\R\times\T^d)}\le \|\varphi\|_{H^{b}(\R)}\|u_0\|_{H^{\gamma}(\T^d)}+\widetilde{C}T^{1-(b+b')}\|u\|_{X_{\alpha}^{\gamma,b}(\R\times\T^d)}^{3}
			\end{equation*}
			\begin{equation*}
				\le\left(\|\varphi\|_{H^{b}(\R)}+\widetilde{C}R^3T^{1-(b+b')}\|u_0\|_{H^{\gamma}(\T^d)}^2\right)\|u_0\|_{H^{\gamma}(\T^d)}.
			\end{equation*}
			Note that we can choose $T$ sufficiently small and independent of $\|u_0\|_{H^s(\T^d)}$ such that $\Phi$ maps $E_R$ into $E_R$. Similarly, we can also show that $\Phi$ is a contraction on $E_R$, which completes the proof of Theorem \ref{local well-posedness}.
		\end{proof}
		\begin{rem}\label{high}
			Applying the $(2\sigma+1)$-linear estimate in Remark \ref{2sigma+1}, we can actually derive the local well-posedness theory for the following fractional nonlinear Schr\"odinger equation (\ref{fNLS}) for any positive integer $\sigma$:
			\begin{equation}\label{fNLS}
				\begin{cases}
					i\partial_{t} u(t,x)=|D|^{\alpha}u+ |u|^{2\sigma}u, \\[4pt]
					u(0,x) = u_{0}(x), \quad (t,x)\in\T^d\times \R.
				\end{cases}
			\end{equation}
			We present this result here, which is very similar to Theorem \ref{local well-posedness}, and leave the details for interested readers:
			\begin{itemize}
				\item Let $\alpha>1$, $\sigma<p\le\infty$, $\gamma>\frac{\gamma_{\alpha,p}}{2}$, and $\frac{1}{2}<b<1-\frac{\sigma}{2p}$. If $u_{0}\in H^{\gamma}(\T^d)$, then there exists $T_0=T_0(\|u_0\|_{H^{\gamma}(\T^d)})>0$ such that equation (\ref{fNLS}) has a unique solution $u\in X_{\alpha}^{\gamma,b}([-T_0,T_0]\times\T^d)\supseteq C([-T_0,T_0]; H^{\gamma}(\T^d))$. The solution $u$ also satisfies the bound 
				\begin{equation*}
					\|u\|_{X_{\alpha}^{\gamma,b}([-T_0,T_0]\times\T^d)}\lesssim \|u_{0}\|_{H^{\gamma}(\T^d)}.
				\end{equation*}
				Moreover, if the initial data $u_{0}$ has higher regularity, i.e., $u_{0}\in H^{s}(\T^d)$ for $s>\gamma$, then we can still ensure that $u\in C([-T_0,T_0]; H^{s}(\T^d))$.
				
				Similarly, replacing $\gamma_{\alpha,p}$ with $\ell_{\alpha,p}$, the above result also holds for $0<\alpha<1$.
			\end{itemize}
		\end{rem}

		\begin{rem}\label{original}
			If we have the a priori bound 
			\[
			\|u(t)\|_{H^{\gamma}(\T^d)}\lesssim_{u_0}(1+|t|)^A,
			\]
			then Theorem~\ref{local well-posedness} implies that $u$ can be extended to $\R$, that is, there exists a unique global smooth solution.
			
			Moreover, for any $\gamma_{0}\in \left(\frac{\gamma_{\alpha,p}}{2}, \gamma\right)$, combining the Strichartz estimate in Lemma~\ref{Stri3} with the comment in Remark~\ref{local version}, we further obtain
			\begin{equation*}
				\||D|^{\gamma-\gamma_{0}}u\|_{L^{2p}(I(t);L^{\infty}(\T^d))}
				\lesssim 
				\|u\|_{X_{\alpha}^{\gamma,b}(I(t)\times \T^d)}
				\lesssim 
				\|u(t)\|_{H^{\gamma}(\T^d)}
				\lesssim_{u_0}
				(1+|t|)^A,
			\end{equation*}
			where $I(t)$ is an interval containing $t$ whose length depends only on $\|u(t)\|_{H^{\gamma}(\T^d)}$. In fact, from the proof of Theorem~\ref{local well-posedness}, we have
			\vspace{-6pt}
			\begin{equation*}
				|I(t)|\sim \|u(t)\|_{H^{\gamma}(\T^d)}^{-\frac{2}{1-b-b'}}.
			\end{equation*}
			
			Consequently, on the interval $[0,T]$ with $T>1$, we may choose 
			$0=t_0<t_1<\cdots<t_{n-1}<t_n=T$ such that 
			$\Delta_{i}:=|t_i-t_{i-1}|\sim T^{-\frac{2A}{1-b-b'}}$ for all $1\le i\le n$. Then we deduce
			\begin{equation*}
				\||D|^{\gamma-\gamma_{0}}u\|_{L^{2p}([0,T];L^{\infty}(\T^d))}
				\le 
				\sum_{i=1}^{n}
				\||D|^{\gamma-\gamma_{0}}u\|_{L^{2p}([t_{i-1},t_i];L^{\infty}(\T^d))},
			\end{equation*}
			\vspace{-6pt}
			\begin{equation}\label{new year}
				\lesssim 
				\sum_{i=1}^{n} (1+|t_i|)^{A}
				\lesssim 
				T^{1+A+\frac{2A}{1-b-b'}},
				\qquad \forall\, 1<p\le\infty.
			\end{equation}
			
			This estimate plays a crucial role in establishing the polynomial growth of Sobolev norms. Similarly, in view of the discussion in Remark~\ref{high}, for any $\sigma < p \le \infty$, the solution $u$ to equation~(\ref{fNLS}) satisfies an analogous estimate to~\eqref{new year}.
		\end{rem}
		\section{Polynomial growth of Sobolev norms}    
		The main idea for establishing polynomial growth of Sobolev norms is the introduction of a modified energy; see \cite{22,20}.
		
		We first recall the conservation of mass and energy, which will be used in the derivation of polynomial growth:
		
		Suppose that $u$ is a solution to the fractional nonlinear Schr\"{o}dinger equation (\ref{FNLS}) with sufficient regularity. Then we have
		\begin{itemize}
			\item Mass conservation:
			\begin{equation*}
				M(u(t)):=\|u(t)\|_{L^{2}(\T^d)}^{2}\equiv \|u_{0}\|_{L^{2}(\T^d)}^{2}, \quad \forall t\in \R;
			\end{equation*}
			\item Energy conservation:
			\begin{equation*}
				E(u(t)):=\frac{1}{2}\||D|^{\frac{\alpha}{2}}u(t)\|_{L^{2}(\T^d)}^{2}+\frac{1}{4}\|u(t)\|_{L^{4}(\T^d)}^{4}\equiv \frac{1}{2}\||D|^{\frac{\alpha}{2}}u_0\|_{L^{2}(\T^d)}^{2}+\frac{1}{4}\|u_0\|_{L^{4}(\T^d)}^{4}, \quad \forall t\in \R.
			\end{equation*}
		\end{itemize} 
	
		In the discussion of polynomial growth of Sobolev norms, we treat separately the cases $\alpha>d$ and $\frac{d}{2}<\alpha\le d$. The reason why $\alpha>d$ is special is that we have the embedding $H^{\frac{\alpha}{2}}(\T^d)\hookrightarrow L^{\infty}(\T^d)$, which, by conservation laws, implies that the $L^{\infty}$-norm can be controlled by a constant. 
		
		Moreover, when $\alpha>d$, the polynomial growth result holds for any dimension $d\ge 2$, and the proof is independent of the estimates established in the previous sections. However, when $\frac{d}{2}<\alpha\le d$, we need to use those estimates to recover control of the $L^{\infty}(\T^d)$-norm.
		
		\medskip
		\noindent
		\textbf{Case 1: }$\boldsymbol{\alpha>d}$
		
		For completeness, we first establish the existence of smooth solutions when the initial data $u_0$ is smooth. By the second part of Theorem~\ref{local well-posedness}, there exists $T_0>0$ such that on the interval $[-T_0,T_0]$ one has a smooth solution $
		u\in C^{\infty}\big([-T_0,T_0]; C^{\infty}(\T^d)\big)$. Applying the polynomial growth estimate for Sobolev norms proved below, we deduce that the $H^{\gamma}$-norm of $u(t)$ cannot blow up in finite time. Consequently, the solution extends globally, which yields the global well-posedness of the fractional nonlinear Schr\"{o}dinger equation~\eqref{FNLS}. 
		
		Next, we introduce the following modified energy: 
		\[
		\mathcal{E}_{\alpha,n}(u) := \|u\|_{L^{2}(\mathbb{T}^d)}^2 + \||D|^{\alpha+n} u\|_{L^{2}(\mathbb{T}^d)}^2 +  J_1(u)+J_2(u), 
		\]
		where \(J_{1}(u)\) and \(J_2(u)\) are given by 
		\begin{equation*}
			J_{1}(u):=2\Re e\bigl( |D|^{\alpha+n} u, |D|^{n} (|u|^2 u)  \bigr), \quad 
			J_2(u):= - \frac{1}{2}\bigl\| |D|^{\frac{\alpha}{2}+n} (|u|^2) \bigr\|_{L^{2}(\mathbb{T}^d)}^2.
		\end{equation*}
		
		The modified energy \(\mathcal{E}_{\alpha,n}(u)\) serves as a suitable alternative to the \( H^{\alpha+n} \)-norm of \( u \), which behaves better than a single \(H^{\alpha+n}\)-norm under time differentiation. 
		
		To show that \(\mathcal{E}_{\alpha,n}(u)\) is indeed an alternative, we prove that \(\mathcal{E}_{\alpha,n}(u)\) is equivalent to \(\|u\|_{H^{\alpha+n}(\T^d)}\) when the latter is sufficiently large, which is harmless in studying the growth of Sobolev norms.
		
		Note that the first two terms of \(\mathcal{E}_{\alpha,n}(u)\) constitute the \(H^{\alpha+n}\)-norm, so we only need to control \(J_1(u)\) and \(J_2(u)\). 
		\begin{itemize}
			\item \[
			|J_1(u)| \lesssim \||D|^{\alpha+n} u\|_{L^{2}(\mathbb{T}^d)} \, \||D|^{n} (|u|^2 u)\|_{L^{2}(\mathbb{T}^d)}
			\]
			\[
			\lesssim \|u\|_{H^{\alpha+n}(\T^d)} \, \|u\|_{H^{n}(\T^d)}\, \|u\|_{L^{\infty}(\T^d)}^{2}
			\lesssim \|u\|_{H^{\alpha+n}(\T^d)}^{2-\varepsilon_{\alpha,n}} \, \|u_0\|_{H^{\alpha/2}(\T^d)}^{\varepsilon_{\alpha,n}} \, \|u\|_{L^{\infty}(
				\T^d)}^2,
			\]
			\item \[
			|J_2(u)| \lesssim \|u\|_{L^{\infty}(\T^d)}^2 \|u\|_{H^{\frac{\alpha}{2}+n}(\T^d)}^2\,\lesssim \|u\|_{H^{\alpha+n}(\T^d)}^{2-\varepsilon_{\alpha,n}} \, \|u_0\|_{H^{\alpha/2}(\T^d)}^{\varepsilon_{\alpha,n}} \, \|u\|_{L^{\infty}(\T^d)}^2,
			\]
		\end{itemize}
		where \(\varepsilon_{\alpha,n}=\min \lbrace\frac{2\alpha}{2n+\alpha},1\rbrace\) and we control \(H^n\) and \(H^{\frac{\alpha}{2}+n}\) by interpolation between \(H^{\frac{\alpha}{2}}\) and \(H^{\alpha+n}\). Since the exponent of the \(H^{\alpha+n}\)-norm in \(J_1(u)\) and \(J_2(u)\) is strictly less than \(2\), we see that \(\mathcal{E}_{\alpha,n}(u)\) is equivalent to the \(H^{\alpha+n}\)-norm.
		
		Next, we calculate the time derivative of \(\mathcal{E}_{\alpha,n}(u)\) as follows, where \(\dot{u}\) denotes \(\partial_{t}u\).
		
		Since the \(L^2\)-norm is conserved, we denote \(J_0(u):=\||D|^{\alpha+n}u\|_{L^2(\T^d)}^2\). Then 
		\[
		\frac{d}{dt}J_0(u)=2\Re e\left(|D|^{\alpha+n}\dot{u}, |D|^{\alpha+n}u\right)  = 2\Im m\left(|D|^{\alpha+n}\dot{u}, |D|^{n}\dot{u} \right) - 2\Re e\left( |D|^{\alpha+n}\dot{u}, |D|^{n}(|u|^{2}u) \right),
		\]
		where we substituted the equation \( |D|^{\alpha}u = i\dot{u} - |u|^{2}u \).
		The first term on the right-hand side vanishes. Combining this with the time derivative of \( J_1(u) \), we obtain
		\[
		\frac{d}{dt} \bigl[ J_0 + J_1 \bigr](u) = 2\Re e\left( |D|^{\alpha+n}u, |D|^{n}\bigl(|u|^{2}u\bigr)^{\cdot} \right)
		\]
		\[
		=2\sum_{|\beta|=n}\Re e\left( \partial_{x}^{\beta}|D|^{\alpha}u, \partial_{x}^{\beta}\bigl(|u|^{2}u\bigr)^{\cdot} \right)
		=2\sum_{|\beta|=n}\Re e\left( \partial_{x}^{\beta}|D|^{\alpha}u, \partial_{x}^{\beta}\bigl(\dot{u}|u|^2+u(|u|^2)^{\cdot}\bigr) \right).
		\]
		Applying the Leibniz rule, we obtain three terms for fixed \(\beta\): 
		\[
		2\Re e\left( \partial_x^{\beta}|D|^{\alpha}u, \; (\partial_x^{\beta}\dot{u})|u|^{2} \right)
		+ 2\Re e\left( \partial_x^{\beta}|D|^{\alpha}u, \; \sum_{|\rho|<|\beta|} \binom{\beta}{\rho} \bigl[ (\partial_x^{\rho}\dot{u})\,(\partial_x^{\beta-\rho}|u|^{2})+(\partial_{x}^{\beta-\rho}u)(\partial_{x}^{\rho}(|u|^2)^{\cdot}) \bigr] \right)
		\]
		\begin{equation}\label{qq}
			+ 2\Re e\left( \partial_x^{\beta}|D|^{\alpha}u, \; u\partial_x^{\beta}\bigl( |u|^{2} \bigr)^{\cdot}  \right).
		\end{equation}
		
		For the first term, we denote it as $I$. Then we can substitute the equation \( |D|^{\alpha}u = i\dot{u} - |u|^{2}u \) into it and obtain  
		\[
		|I|= \big|2 \Re e\big(\partial_x^\beta (|u|^2 u), (\partial_x^\beta \dot{u}) |u|^2\big)\big| \lesssim \||u|^2 u \|_{H^{n}(\mathbb{T}^d)} \|u\|_{L^{\infty}(\mathbb{T}^d)}^2 \|\dot{u}\|_{H^{n}(\mathbb{T}^d)}
		\]
		\[
		\lesssim \|u\|_{L^{\infty}(\mathbb{T}^d)}^4 \|u\|_{H^{n}(\mathbb{T}^d)} \big(\|u\|_{H^{\alpha+n}(\mathbb{T}^d)} + \|u\|_{L^{\infty}(\mathbb{T}^d)}^2 \|u\|_{H^{n}(\mathbb{T}^d)}\big),
		\]
		where the estimate \(\|\dot{u}\|_{H^{n}(\mathbb{T}^d)} \lesssim \|u\|_{H^{\alpha+n}(\mathbb{T}^d)} + \|u\|_{L^{\infty}(\mathbb{T}^d)}^2 \|u\|_{H^{n}(\mathbb{T}^d)}\) follows from the equation itself. Then, applying interpolation again, we derive
		\[
		|I| \lesssim \|u\|_{H^{\alpha+n}(\mathbb{T}^d)}^{2-\varepsilon_{\alpha,n}} \|u\|_{H^{\frac{\alpha}{2}}(\mathbb{T}^d)}^{\varepsilon_{\alpha,n}} \|u\|_{L^{\infty}(\mathbb{T}^d)}^4,
		\]
		with the same \(\varepsilon_{\alpha,n}\) as before. 
		
		For the second term, we denote
		\[
		II:=2\Re e\big(\partial_{x}^{\beta}|D|^{\alpha}u, (\partial_x^{\rho}\dot{u})\,(\partial_x^{\beta-\rho}|u|^{2})\big)
		\]
		and suppose \(|\rho|=k<n\). Applying H\"{o}lder's inequality and Sobolev embedding, we obtain 
		\[
		|II|\lesssim \|\partial_{x}^{\beta}|D|^{\alpha}u\|_{L^{2}(\T^d)}\|\partial_{x}^{\rho}\dot{u}\|_{L^{\frac{2d}{d-2s_1}}(\T^d)}\|\partial_{x}^{\beta-\rho}|u|^2\|_{L^{\frac{2d}{d-2s_2}}(\T^d)}
		\]
		\[
		\lesssim\|u\|_{H^{\alpha+n}(\mathbb{T}^d)} \|u\|_{H^{\alpha+k+s_1}(\mathbb{T}^d)} \|u\|_{L^{\infty}(\mathbb{T}^d)} \|u\|_{H^{n-k+s_2}(\mathbb{T}^d)},
		\]
		where \(s_1:=\frac{d(n-k)}{2(n+\alpha)}\) and \(s_2:=\frac{d(k+\alpha)}{2(n+\alpha)}\).
		
		One can check that \(\max \lbrace\alpha+k+s_1,\; n-k+s_2\rbrace<\alpha+n\). Then we can again use interpolation to derive
		\begin{equation}\label{q}
			|II| \lesssim \|u\|_{H^{\alpha+n}(\mathbb{T}^d)}^{2-\theta_{\alpha,n}} \|u\|_{H^{\frac{\alpha}{2}}(\mathbb{T}^d)}^{1+\theta_{\alpha,n}} \|u\|_{L^{\infty}(\mathbb{T}^d)},
		\end{equation}
		where \(\theta_{\alpha, n} := \frac{\alpha-d}{2n+\alpha}>0\).
		
		Similarly, for the term \(2 \Re e\big(\partial_x^\beta |D|^{\alpha} u, (\partial_x^{\beta-\rho} u) (\partial_x^{\rho} (|u|^2)^{\cdot})\big)\), we can also bound it by the right-hand side of (\ref{q}).
		
		For the third term, denoted by \(III\), we have
		\[
		III=2 \Re e\big(\partial_x^\beta |D|^{\alpha} u, u\partial_x^\beta (|u|^2)^{\cdot}\big) = 2 \Re e \left( \bar{u} \partial_x^\beta  |D|^{\alpha} u, \partial_x^\beta (|u|^2)^{\cdot}\right)
		\]
		\[
		= 2 \Re e\left( \partial_{x}^{\beta}(\bar{u}|D|^{\alpha} u), \partial_x^\beta (|u|^2)^{\cdot} \right)-\sum_{|\rho|<|\beta|}2\Re e\left((\partial_{x}^{\beta-\rho}\bar{u})(\partial_{x}^{\rho}|D|^{\alpha}u), \partial_x^\beta (|u|^2)^{\cdot}\right).
		\]
		Note that the second term above can be bounded by the right-hand side of (\ref{q}) in a similar manner as for the term $II$. 
		
		Summing $III$ over all $\beta$, the estimate reduces to bounding
		\[
		2\Re e \big(|D|^{n}(\bar{u}|D|^{\alpha}u), |D|^{n}(|u|^2)^{\cdot}\big).
		\]
		Combining this with the time derivative of \(J_2(u)\),
		\begin{equation*}
			\frac{d}{dt}J_2(u)=-\big(|D|^{\frac{\alpha}{2}+n}(|u|^2), |D|^{\frac{\alpha}{2}+n}(|u|^2)^{\cdot}\big)=-\big(|D|^{\alpha+n}(u\bar{u}), |D|^{n}(|u|^2)^{\cdot}\big),
		\end{equation*}
		it suffices to control the following structure:
		\begin{equation}\label{Leibniz structure}
			\big(|D|^{n}\big(u|D|^{\alpha}\bar{u}+\bar{u}|D|^{\alpha}u-|D|^{\alpha}(u\bar{u})\big), |D|^{n}(|u|^{2})^{\cdot}\big).
		\end{equation}
		
		It is worth mentioning that Lemma 2.2 in \cite{1}, which was previously used by Thirouin to handle this structure, requires the assumption \(\alpha < 2\) and therefore does not apply in higher dimensions. To overcome this difficulty, we first establish a polynomial growth bound for the \(H^{\alpha}\)-norm, i.e., for \(n=0\), and then perform interpolation with this norm, rather than with the \(H^{\frac{\alpha}{2}}\)-norm.
		
		When \(n=0\), the structure (\ref{Leibniz structure}) can be bounded by
		\begin{equation*}
			\Big| \big(u|D|^{\alpha}\bar{u}+\bar{u}|D|^{\alpha}u-|D|^{\alpha}(u\bar{u}), (|u|^2)^{\cdot}\big) \Big|
			\le\left\|u|D|^{\alpha}\bar{u}+\bar{u}|D|^{\alpha}u-|D|^{\alpha}(u\bar{u})\right\|_{L^2(\T^d)}\cdot\left\|(|u|^2)^{\cdot}\right\|_{L^{2}(\T^d)}:=I_1\cdot I_2.
		\end{equation*}
		For \(I_1:=\left\|u|D|^{\alpha}\bar{u}+\bar{u}|D|^{\alpha}u-|D|^{\alpha}(u\bar{u})\right\|_{L^2(\T^d)}\), we can apply the fractional Leibniz rule in Theorem \ref{fractional Leibniz 2} with \(p=2\), \(s=\alpha\), \(s_1=s_2=\frac{\alpha}{2}\), and \(p_1=p_2=4\) to obtain
		\begin{equation*}
			I_1\lesssim \||D|^{\frac{\alpha}{2}}u\|_{L^{4}(\T^d)}\cdot\||D|^{\frac{\alpha}{2}}\bar{u}\|_{L^{4}(\T^d)}
		\end{equation*}
		\vspace{-9pt}
		\begin{equation*}
			+\sum_{0<|\beta|=k\le \frac{\alpha}{2}}\left(\|\partial_{x}^{\beta}u\|_{L^{\frac{2d}{d-2s_{1,k}}}(\T^d)}\cdot\||D|^{\alpha,\beta}\bar{u}\|_{L^\frac{2d}{d-2s_{2,k}}(\T^d)}+\|\partial_{x}^{\beta}\bar{u}\|_{L^{\frac{2d}{d-2s_{1,k}}}(\T^d)}\cdot\||D|^{\alpha,\beta}u\|_{L^\frac{2d}{d-2s_{2,k}}(\T^d)}\right),
		\end{equation*}
		\vspace{-7pt}
		\begin{equation*}
			\lesssim \||D|^{\frac{\alpha}{2}}u\|_{L^{4}(\T^d)}\cdot\||D|^{\frac{\alpha}{2}}\bar{u}\|_{L^{4}(\T^d)}+\sum_{k=1}^{[\frac{\alpha}{2}]}\|u\|_{H^{k+s_{1,k}}(\T^d)}\|u\|_{H^{\alpha-k+s_{2,k}}(\T^d)}\lesssim \|u\|_{H^{\frac{\alpha}{2}}(\T^d)}^{\frac{2\alpha-d}{\alpha}}\|u\|_{H^{\alpha}(\T^d)}^{\frac{d}{\alpha}}.
		\end{equation*}
		where we take \(s_{1,k}:=\frac{d(\alpha-k)}{2\alpha}\) and \(s_{2,k}:=\frac{dk}{2\alpha}\).
		
		For \(I_2:=\left\|(|u|^2)^{\cdot}\right\|_{L^{2}(\T^d)}\), we substitute the equation (\ref{FNLS}) to derive
		\begin{equation*}
			I_2\lesssim \|u\|_{L^{\infty}(
				\T^d)}\|\dot{u}\|_{L^{2}(\T^d)}\lesssim \|u\|_{L^{\infty}(
				\T^d)}\|u\|_{H^{\alpha}(\T^d)}.
		\end{equation*}
		Combining the above estimates, we conclude that the structure (\ref{Leibniz structure}) can be bounded by \(\|u\|_{H^{\alpha}(\T^d)}^{2-\theta_{\alpha,0}}\|u\|_{H^{\frac{\alpha}{2}}(\T^d)}^{1+\theta_{\alpha,0}}\|u\|_{L^{\infty}(\T^d)}\).
		
		Note that the conservation law and the Sobolev embedding \(H^{\frac{\alpha}{2}}(\T^d)\hookrightarrow L^{\infty}(\T^d)\) ensure that the \(H^{\frac{\alpha}{2}}\)-norm and the \(L^{\infty}\)-norm can be bounded by a constant. Moreover, when \(n=0\), the second term in (\ref{qq}) vanishes and the estimate (\ref{q}) is not needed. Then we derive that 
		\begin{equation*}
			\bigg|\frac{d}{dt}\mathcal{E}_{\alpha,0}(u)\bigg|\lesssim \|u\|_{H^{\alpha}(\T^d)}^{2-\theta_{\alpha,0}}+\|u\|_{H^{\alpha}(\T^d)}^{2-\varepsilon_{\alpha,0}}\lesssim \|u\|_{H^{\alpha}(\T^d)}^{2-\theta_{\alpha,0}},
		\end{equation*}
		which leads, by Gronwall's inequality, to the following polynomial growth estimate:
		\begin{equation*}
			\|u(t)\|_{H^{\alpha}(\T^d)}\lesssim_{u_0} (1+|t|)^{\frac{1}{\theta_{\alpha,0}}}=(1+|t|)^{\frac{\alpha}{\alpha-d}}.
		\end{equation*}
		
		Next, we apply this result to control the growth of the general \(H^{\alpha+n}\)-norm. The argument relies on a simple lemma together with a variant of Gronwall's inequality. We also emphasize that the following lemma not only generalizes Lemma~3.11 in~\cite{1} to higher dimensions but also removes the additional assumption \(\alpha<1\) in one dimension.
		
		\begin{lemma}\label{eee}
			If \(\alpha>\frac{d}{2}\) and \(n\ge 1\), we have 
			\begin{equation*}
				\left\|u|D|^{\alpha}\bar{u}+\bar{u}|D|^{\alpha}u-|D|^{\alpha}(u\bar{u})\right\|_{H^{n}(\T^d)}\lesssim \|u\|_{H^{\alpha}(\T^d)}^{1+\frac{2\alpha-d}{2n}}\|u\|_{H^{\alpha+n}(\T^d)}^{1-\frac{2\alpha-d}{2n}}.
			\end{equation*}
		\end{lemma}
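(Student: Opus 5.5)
We plan to estimate the commutator-type expression $B(u):=u|D|^{\alpha}\bar{u}+\bar{u}|D|^{\alpha}u-|D|^{\alpha}(u\bar{u})$ through its Fourier representation. On the Fourier side, $\widehat{B(u)}(k)=\sum_{k_1+k_2=k}m(k_1,k_2)\widehat{u}(k_1)\widehat{\bar u}(k_2)$ with symbol $m(k_1,k_2)=|k_1|^{\alpha}+|k_2|^{\alpha}-|k_1+k_2|^{\alpha}$. It suffices to control $\||D|^{n}B(u)\|_{L^2}$, since the $L^2$ part is already handled by the $n=0$ bound established above and interpolation. The key point is that $m$ has no "high $\times$ low into high" loss of the full $\alpha$ derivatives. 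If $|k_2|\le|k_1|/2$, the mean value theorem gives $\big||k_1|^\alpha-|k_1+k_2|^\alpha\big|\lesssim |k_1|^{\alpha-1}|k_2|$, hence $|m|\lesssim |k_1|^{\alpha-1}|k_2|+|k_2|^\alpha$. If the two frequencies are comparable, $|m|\lesssim |k_1|^{\alpha/2}|k_2|^{\alpha/2}$. Multiplying by $|k|^n\lesssim \max(|k_1|,|k_2|)^n$ shows that the total derivative count $\alpha+n$ can always be split so that the high-frequency factor carries at most $\alpha+n-\min(1,\alpha)/\ldots$, i.e.\ strictly less than $\alpha+n$, and the low factor carries at least some derivatives.

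Concretely, we would follow the pattern of the $n=0$ computation. Apply the generalized fractional Leibniz rule of Theorem~\ref{fractional Leibniz 2} (Appendix B) to $\partial_x^{\beta}B(u)$, $|\beta|=n$, after distributing $\partial^\beta$ by the ordinary Leibniz rule. This produces a sum of bilinear terms of the form
\[
\|\partial^{\rho_1}|D|^{a}u\|_{L^{p_1}}\,\|\partial^{\rho_2}|D|^{b}u\|_{L^{p_2}},\qquad |\rho_1|+|\rho_2|+a+b=\alpha+n,\quad \tfrac1{p_1}+\tfrac1{p_2}=\tfrac12,
\]
where neither factor carries all $\alpha+n$ derivatives; the top-order pieces $u\,\partial^\beta|D|^\alpha\bar u$ cancel against the corresponding piece of $|D|^\alpha\partial^\beta(u\bar u)$. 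Each factor is then bounded via Sobolev embedding $H^{s_i}\hookrightarrow L^{2d/(d-2s_i)}$ with $s_1+s_2=d/2$, exactly as with $s_{1,k},s_{2,k}$ above. This yields a bound by $\|u\|_{H^{\sigma_1}}\|u\|_{H^{\sigma_2}}$ with $\sigma_1+\sigma_2=\alpha+n+\frac d2$ and $\alpha\le\sigma_i\le\alpha+n$. We distribute the embedding exponents so that the lower factor is at least $H^\alpha$; this is possible because $\alpha>\frac d2$.

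Finally we interpolate each $H^{\sigma_i}$ between $H^{\alpha}$ and $H^{\alpha+n}$: $\|u\|_{H^{\sigma}}\le\|u\|_{H^\alpha}^{1-\frac{\sigma-\alpha}{n}}\|u\|_{H^{\alpha+n}}^{\frac{\sigma-\alpha}{n}}$. Since $\sigma_1+\sigma_2-2\alpha=n+\frac d2-\alpha$, the total power of $\|u\|_{H^{\alpha+n}}$ is $\frac{n+d/2-\alpha}{n}=1-\frac{2\alpha-d}{2n}$. The power of $\|u\|_{H^\alpha}$ is $1+\frac{2\alpha-d}{2n}$, matching the claim.

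The main obstacle is the cancellation step: ensuring that, for general $\alpha$ (possibly $\alpha>2$, where the Kenig--Ponce--Vega estimate fails) and after distributing $\partial^\beta$, no term puts $\alpha+n$ derivatives on one factor. Equivalently, every term must admit the constraint $\sigma_i\ge\alpha$. We would handle this by organizing the terms of $\partial^\beta B(u)$ as $\sum_{\rho\le\beta}\binom{\beta}{\rho}B(\partial^{\rho}u,\partial^{\beta-\rho}\bar u)$ with the bilinear $B(f,g)=f|D|^\alpha g+g|D|^\alpha f-|D|^\alpha(fg)$. We then apply Theorem~\ref{fractional Leibniz 2} to each $B(\partial^\rho u,\partial^{\beta-\rho}\bar u)$. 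The extreme cases $\rho=0$ or $\rho=\beta$ are where the gain of at least one derivative on the low factor from the symbol bound is essential, and that is the step to check carefully.
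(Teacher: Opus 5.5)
\textbf{There is a genuine gap, and it sits exactly at the step you flagged; the reason you give for it is not the right condition.}

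Your bookkeeping is essentially the paper's. The paper splits $|D|^nB(u)$ into commutators $H_1,H_3,H_5$ plus products $H_2,H_4$, applies Theorem~\ref{fractional Leibniz 2} and Sobolev embedding, and interpolates between $H^\alpha$ and $H^{\alpha+n}$. Your expansion $\partial^\beta B=\sum_\rho\binom{\beta}{\rho}B(\partial^\rho u,\partial^{\beta-\rho}\bar u)$ is equivalent.

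In $B(u,\partial^\beta\bar u)$, Theorem~\ref{fractional Leibniz 2} leaves the first-order terms $\partial_j u\,|D|^{\alpha,e_j}\partial^\beta\bar u$. These are the genuine high--low interaction, as your own bound $|m|\lesssim|k_1|^{\alpha-1}|k_2|$ shows. In them the low factor carries exactly one derivative. After H\"older and $H^{t_i}\hookrightarrow L^{2d/(d-2t_i)}$ with $t_1+t_2=\frac d2$, $t_1<\frac d2$, that factor lands in $H^{1+t_1}$ with $1+t_1<1+\frac d2$. So the constraint $\sigma_{\rm low}\ge\alpha$ can hold only if $\alpha<1+\frac d2$, not ``because $\alpha>\frac d2$''. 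If you instead bound that factor by $\|u\|_{H^\alpha}$, the power of $\|u\|_{H^{\alpha+n}}$ exceeds $1-\frac{2\alpha-d}{2n}$. When $n<\alpha-\frac d2$ the target exponent is even negative, and $\sigma_1+\sigma_2\ge 2\alpha$ is impossible.

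\textbf{The statement itself is false for $\alpha>1+\frac d2$, so this cannot be repaired.} That range contains every $\alpha>d$ with $d\ge2$, which is exactly where the paper invokes the lemma. Take $u=e^{ix_1}+N^{-\alpha}e^{iNx_1}$. Then $\|u\|_{H^\alpha}\sim1$, $\|u\|_{H^{\alpha+n}}\sim N^n$, and
\[
B(u)=2+2N^{-\alpha}+2N^{-\alpha}\bigl(N^{\alpha}+1-(N-1)^{\alpha}\bigr)\cos\bigl((N-1)x_1\bigr).
\]
The cosine coefficient is $\approx 2\alpha N^{-1}$ at frequency $N-1$, so $\|B(u)\|_{H^n}\gtrsim N^{n-1}$. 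The claimed right-hand side is $\sim N^{n-\alpha+\frac d2}$, so the ratio is $\gtrsim N^{\alpha-\frac d2-1}\to\infty$. The paper's one-line proof (``proceed similarly \dots\ interpolate between $H^\alpha$ and $H^{\alpha+n}$'') skips the same failing step.

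\textbf{What your symbol computation does prove for $\alpha>1+\frac d2$.} For both $|k_1|\ge|k_2|$ and the comparable regime one has $\langle k\rangle^n|m(k_1,k_2)|\lesssim\langle k_{\max}\rangle^{\alpha+n-1}|k_{\min}|$. Young's inequality $\ell^2*\ell^1$ together with $\sum_k|k|\,|\widehat u(k)|\lesssim\|u\|_{H^\alpha}$ (valid since $\alpha>1+\frac d2$) gives
\[
\|B(u)\|_{H^n}\lesssim\|u\|_{H^\alpha}\|u\|_{H^{\alpha+n-1}}\le\|u\|_{H^\alpha}^{1+\frac1n}\|u\|_{H^{\alpha+n}}^{1-\frac1n}.
\]
The example shows the gain $\frac1n$ is optimal. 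This weaker bound still feeds into Lemma~\ref{Variant}, but it changes the resulting growth exponent.
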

		
		\begin{proof}
			Let \(F_{\alpha}(u):=u|D|^{\alpha}\bar{u}+\bar{u}|D|^{\alpha}u-|D|^{\alpha}(u\bar{u})\). Then we can rewrite \(|D|^{n}F_{\alpha}(u)\) as follows:
			\begin{equation*}
				|D|^{n}F_{\alpha}(u)= \big(|D|^{n}(u|D|^{\alpha}\bar{u})-|D|^{n}u\cdot|D|^{\alpha}\bar{u}-u\cdot|D|^{n+\alpha}\bar{u}\big)+|D|^{n}u\cdot|D|^{\alpha}\bar{u}
			\end{equation*}
			\begin{equation*}
				+\big(|D|^{n}(\bar{u}|D|^{\alpha}u)-|D|^{n}\bar{u}\cdot|D|^{\alpha}u-\bar{u}\cdot|D|^{n+\alpha}u\big)+|D|^{n}\bar{u}\cdot|D|^{\alpha}u
			\end{equation*}
			\begin{equation*}
				+\big(\bar{u}\cdot|D|^{\alpha+n}u+u\cdot|D|^{\alpha+n}\bar{u}-|D|^{\alpha+n}(|u|^2)\big):=H_1+H_2+H_3+H_4+H_5.
			\end{equation*}
			The bounds for \(H_2\) and \(H_4\) follow easily from H\"{o}lder's inequality and Sobolev embedding. 
			
			For the terms \(H_1\), \(H_3\), and \(H_5\), we can apply the fractional Leibniz rule in Theorem \ref{fractional Leibniz 2} and proceed similarly to the estimate for \(I_1\). The only difference is that we now interpolate between \(H^{\alpha}\) and \(H^{\alpha+n}\), rather than between \(H^{\frac{\alpha}{2}}\) and \(H^{\alpha+n}\).
		\end{proof}
		\begin{lemma}\label{Variant}
			Let \(0<\lambda_k\le 1\) and \(\beta_k>0\), \(k=1,2,\dots,m\). If \(f\in C^{1}([0,\infty))\) satisfies
			\[
			f'(t)\le \sum_{k=1}^{m}f(t)^{1-\lambda_k}\langle t\rangle^{\beta_k},
			\quad 
			\forall t\ge0,
			\]
			with \(f(t)\ge 0\), then we have the following bound:
			\[
			f(t)\lesssim 
			\langle t\rangle^{\alpha_*},
			\quad
			\alpha_*=
			\max_{1\le k\le m}\Big\{
			\frac{\beta_k+1}{\lambda_k}
			\Big\}.
			\]
		\end{lemma}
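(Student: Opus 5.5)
Since $\lambda_k\le1$ and $f\ge0$, the differential inequality can be compared directly with the explicit profile $F(t)=C\langle t\rangle^{\alpha_*}$. I will work with $g(t):=f(t)+1$ rather than $f$, to avoid singular behaviour where $f$ vanishes. Because $1-\lambda_k\ge0$ and $f\le g$, we have $f^{1-\lambda_k}\le g^{1-\lambda_k}$. Hence
\[
g'(t)\le \sum_{k=1}^m g(t)^{1-\lambda_k}\langle t\rangle^{\beta_k}.
\]
Moreover, since $g\ge1$, we have $g^{1-\lambda_k}\le g^{1-\lambda_*}$ with $\lambda_*=\min\lambda_k$. I will not use this crude reduction, however: it loses the individual exponents, and the claimed $\alpha_*$ is a maximum of the per-term quantities $(\beta_k+1)/\lambda_k$.

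\textbf{Comparison step.} Set $F(t):=C\langle t\rangle^{\alpha_*}$ with $C\ge 1+g(0)$ to be fixed. Then
\[
F'(t)=C\alpha_* t\langle t\rangle^{\alpha_*-2}.
\]
This vanishes near $t=0$, so the bare profile is not a supersolution there. To fix this I will use instead
\[
F(t):=C(1+t)^{\alpha_*},\qquad F'=C\alpha_*(1+t)^{\alpha_*-1}.
\]
Since $\langle t\rangle\sim 1+t$ for $t\ge0$, this costs nothing. For each $k$, observe that $\alpha_*\ge(\beta_k+1)/\lambda_k$ is equivalent to
\[
\alpha_*-1\ge (1-\lambda_k)\alpha_*+\beta_k .
\]
Therefore
\[
F(t)^{1-\lambda_k}(1+t)^{\beta_k}=C^{1-\lambda_k}(1+t)^{(1-\lambda_k)\alpha_*+\beta_k}\le C^{1-\lambda_k}(1+t)^{\alpha_*-1}.
\]
Summing over $k$ and using $\langle t\rangle^{\beta_k}\le 2^{\beta_k/2}(1+t)^{\beta_k}$, the right-hand side of the inequality evaluated at $F$ is at most
\[
\Big(\sum_k c_k C^{1-\lambda_k}\Big)(1+t)^{\alpha_*-1}.
\]
Because every $\lambda_k>0$, we have $C^{1-\lambda_k}=o(C)$, so choosing $C$ large gives a constant $<C\alpha_*$. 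We get the strict supersolution property
\[
F'(t)>\sum_k F(t)^{1-\lambda_k}\langle t\rangle^{\beta_k}.
\]
Here $\alpha_*\ge(\beta_k+1)/\lambda_k>0$, so $\alpha_*>0$ and indeed $\alpha_*\ge1$.

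\textbf{Conclusion step.} We have $g(0)<F(0)=C$. Suppose $g$ touches $F$, and let $t_0$ be the first time with $g(t_0)=F(t_0)$. Then $g'(t_0)\ge F'(t_0)$. On the other hand,
\[
g'(t_0)\le \sum_k g(t_0)^{1-\lambda_k}\langle t_0\rangle^{\beta_k}=\sum_k F(t_0)^{1-\lambda_k}\langle t_0\rangle^{\beta_k}<F'(t_0),
\]
a contradiction. Hence $f<g<F\lesssim\langle t\rangle^{\alpha_*}$ for all $t\ge0$, where the implicit constant depends on $f(0)$, the $\lambda_k$, the $\beta_k$ and $m$.

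The only delicate points are:
\begin{itemize}
\item choosing the profile $(1+t)^{\alpha_*}$ rather than $\langle t\rangle^{\alpha_*}$, so that $F'$ does not vanish near $t=0$;
\item using $\lambda_k>0$ to absorb the constants $C^{1-\lambda_k}$ into $C$.
\end{itemize}
Neither is a real obstacle; the exponent bookkeeping in the equivalence above is the heart of the argument.
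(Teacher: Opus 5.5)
Your proposal is correct and follows the same first-crossing barrier argument as the paper: the exponent comparison $(1-\lambda_k)\alpha_*+\beta_k\le\alpha_*-1$ together with taking the constant large enough to absorb $C^{1-\lambda_k}$. Your profile $C(1+t)^{\alpha_*}$ replaces the paper's step of arranging $t_0\ge 1$, which the paper needs because $\frac{d}{dt}\langle t\rangle^{\alpha_*}$ vanishes at $t=0$; the shift to $f+1$ is harmless but unnecessary, since $1-\lambda_k\ge 0$.
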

		
		\begin{proof}
			Let \(A\) be sufficiently large, and consider the function
			\[
			h(t)=f(t)-A\langle t\rangle^{\alpha_\ast}.
			\]
			Suppose, for contradiction, that there exists a first time \(t_0\) such that
			\[
			h(t_0)=0,
			\quad 
			h(t)<0 \ \text{ for all } t<t_0.
			\]
			Note that we can take \(A\gg f(0)\) and ensure that \(t_0\ge 1\). Then from basic calculus, we know \(h'(t_0)\ge 0\); hence
			\[
			f'(t_0)\ge A\frac{d}{dt}\langle t\rangle^{\alpha_\ast}\Big|_{t=t_0}
			\gtrsim A\langle t_0\rangle^{\alpha_\ast-1}.
			\]
			Since \(f(t_0)=A\langle t_0\rangle^{\alpha_\ast}\), the differential inequality gives
			\[
			f'(t_0)
			\le
			\sum_{k=1}^{m}A^{1-\lambda_k}
			\langle t_0\rangle^{(1-\lambda_k)\alpha_\ast+\beta_k}.
			\]
			Because of the choice \(\alpha_*=
			\max_{1\le k\le m}\left\{\frac{\beta_k+1}{\lambda_k}\right\}
			\), we have 
			\[
			(1-\lambda_k)\alpha_\ast+\beta_k
			\le
			\alpha_\ast-1
			\quad \forall k=1,2,\dots,m,
			\]
			which yields a contradiction for large \(A\).
			
			Therefore we obtain the desired bound \(f(t)\le A\langle t\rangle^{\alpha_\ast}\). 
		\end{proof}
		
		Now we are ready to bound the \(H^{\alpha+n}\)-norm. By applying H\"{o}lder's inequality, the structure (\ref{Leibniz structure}) can be bounded by 
		\begin{equation*}
			\lesssim \|u|D|^{\alpha}\bar{u}+\bar{u}|D|^\alpha u-|D|^{\alpha}(u\bar{u})\|_{H^n(\T^d)}\cdot\|(|u|^2)^{\cdot}\|_{H^{n}(\T^d)}:=J_1\cdot J_2.
		\end{equation*}
		For \(J_1:=\|u|D|^{\alpha}\bar{u}+\bar{u}|D|^\alpha u-|D|^{\alpha}(u\bar{u})\|_{H^n(\T^d)}\), we use Lemma \ref{eee} to obtain
		\begin{equation*}
			J_1\lesssim 	\|u\|_{H^{\alpha}(\T^d)}^{1+\frac{2\alpha-d}{2n}}\|u\|_{H^{\alpha+n}(\T^d)}^{1-\frac{2\alpha-d}{2n}}.
		\end{equation*}
		For \(J_2:=\|(|u|^2)^{\cdot}\|_{H^{n}(\T^d)}\), we have (ignoring zero-order terms)
		\begin{equation*}
			J_2\lesssim\sum_{|\beta|=n}\|\bar{u}\partial_{x}^{\beta}\dot{u}\|_{L^{2}(\T^d)}+\sum_{0<|\rho|=k\le n=|\beta|}\|\partial_{x}^{\rho}\bar{u}\cdot \partial_{x}^{\beta-\rho}\dot{u}\|_{L^{2}(\T^d)}
		\end{equation*}
		\begin{equation*}
			\lesssim \sum_{|\beta|= n}\|u\|_{L^{\infty}(\T^d)}\|\partial_{x}^{\beta}\dot{u}\|_{L^{2}(\T^d)}+\sum_{0<|\rho|=k\le n=|\beta|}\|\partial_{x}^{\rho}\bar{u}\|_{L^\frac{2d}{d-2\widetilde{s}_{1,k}}(\T^d)}\|\partial_{x}^{\beta-\rho}\dot{u}\|_{L^\frac{2d}{d-2\widetilde{s}_{2,k}}(\T^d)}
		\end{equation*}
		\begin{equation*}
			\lesssim \|u\|_{L^{\infty}(\T^d)}\|u\|_{H^{\alpha+n}(\T^d)}+\sum_{k=1}^{n}\|u\|_{H^{k+\widetilde{s}_{1,k}}(\T^d)}\|u\|_{H^{n+\alpha-k+\widetilde{s}_{2,k}}(\T^d)}
		\end{equation*}
		\begin{equation*}
			\lesssim \|u\|_{L^{\infty}(\T^d)}\|u\|_{H^{\alpha+n}(\T^d)}+\|u\|_{H^{\frac{\alpha}{2}}(\T^d)}^{1+\frac{\alpha-d}{2n+\alpha}}\|u\|_{H^{\alpha+n}(\T^d)}^{1-\frac{\alpha-d}{2n+\alpha}}\lesssim \|u\|_{H^{\alpha+n}(\T^d)},
		\end{equation*}
		where we take \(\widetilde{s}_{1,k}:=\frac{d(\alpha+n-k)}{2(\alpha+n)}\) and \(\widetilde{s}_{2,k}:=\frac{dk}{2(\alpha+n)}\).
		
		Combining all the estimates above, we conclude that
		\begin{equation*}
			\bigg|\frac{d}{dt}\mathcal{E}_{\alpha,n}(u)\bigg|\lesssim \|u\|_{H^{\alpha+n}(\T^d)}^{2-\frac{2\alpha-d}{2n}}\|u\|_{H^{\alpha}(\T^d)}^{1+\frac{2\alpha-d}{2n}}+\|u\|_{H^{\alpha+n}(\T^d)}^{2-\varepsilon_{\alpha,n}}+\|u\|_{H^{\alpha+n}(\T^d)}^{2-\theta_{\alpha,n}}
		\end{equation*}
		\begin{equation*}
			\lesssim \|u\|_{H^{\alpha+n}(\T^d)}^{2-\frac{2\alpha-d}{2n}}(1+|t|)^{\frac{\alpha}{\alpha-d}\cdot\big(1+\frac{2\alpha-d}{2n}\big)}+\|u\|_{H^{\alpha+n}(\T^d)}^{2-\varepsilon_{\alpha,n}}+\|u\|_{H^{\alpha+n}(\T^d)}^{2-\theta_{\alpha,n}}.
		\end{equation*}
		Applying the variant of Gronwall's inequality in Lemma \ref{Variant}, we deduce that
		\begin{equation*}
			\|u(t)\|_{H^{\alpha+n}(\T^d)}\lesssim_{u_0}(1+|t|)^{\frac{2n+\alpha}{\alpha-d}}.
		\end{equation*}

		\medskip
		\noindent
		\textbf{Case 2: }$\boldsymbol{\frac{d}{2}<\alpha\le d}$
		
		In this case, we can no longer use the conservation of the \(H^{\frac{\alpha}{2}}\)-norm to control the \(L^{\infty}\)-norm. To overcome this difficulty, we instead rely on the a priori bound together with the estimate stated in Remark~\ref{original}, namely, for any $\gamma_{0}\in \left(\frac{\gamma_{\alpha,p}}{2}, \gamma\right)$,
		\begin{equation*}
			\||D|^{\gamma-\gamma_{0}}u\|_{L^{2p}([0,T];L^{\infty}(\T^d))}
			\lesssim T^{1+A+\frac{2A}{1-b-b'}}, 
			\qquad \forall\, 1<p\le\infty,\ \forall\, T>1.
		\end{equation*}
		This estimate allows us to use $\|u\|_{W^{\gamma-\gamma_{0},\infty}(\T^d)}$ to control the time derivative of the modified energy $\mathcal{E}_{\alpha,n}(u)$.
		
		As before, we first show that the \(H^{\alpha}\)-norm, i.e., \(n=0\), has polynomial growth. The key difficulty comes from the structure (\ref{Leibniz structure}). Recall that, using interpolation, we bounded the term \(I_1:=\|u|D|^{\alpha}\bar{u}+\bar{u}|D|^{\alpha}u-|D|^{\alpha}(u\bar{u})\|_{L^2(\T^d)}\) by \(\|u\|_{H^{\alpha}(\T^d)}^{\frac{d}{\alpha}}\), which requires \(\frac{d}{\alpha}<1\), i.e., \(\alpha>d\), a condition that is never satisfied in this case.
		
		To overcome this difficulty, we introduce the following Gagliardo-Nirenberg inequalities (see \cite{24}), which successfully help us reduce the order of regularity.
		\begin{lemma}\label{Gag}
			Suppose that \(s,s_1,s_2\ge 0\), \(\theta\in (0,1)\), and \(1\le p_1,p_2,p_3\le \infty\) satisfy the relations
			\begin{equation*}
				s=\theta s_1+(1-\theta)s_2, \quad \frac{1}{p}=\frac{\theta}{p_1}+\frac{1-\theta}{p_2}.
			\end{equation*}
			Then the following estimate
			\begin{equation*}
				\|f\|_{W^{s,p}(\T^d)}\lesssim \|f\|_{W^{s_1,p_1}(\T^d)}^{\theta}\|f\|_{W^{s_2,p_2}(\T^d)}^{1-\theta}, \quad \forall f\in W^{s_1,p_1}(\T^d)\cap W^{s_2,p_2}(\T^d)
			\end{equation*}
			holds if and only if the condition (\ref{fails}) fails:
			\begin{equation}\label{fails}
				s_2 \; \text{is an integer} \ge 1, \quad p_2=1, \quad s_2-s_1\le 1-\frac{1}{p_1}.
			\end{equation}
		\end{lemma}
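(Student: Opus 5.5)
We would not reprove this from scratch. Instead we would transfer the full characterization on $\R^d$ due to Brezis--Mironescu \cite{24} to the torus, which is compact with no scaling, by localization. The inhomogeneous norms $W^{s,p}(\T^d)$ are what make the statement scale-free, so the lower-order parts of the norms absorb everything. Concretely, we identify $f\in W^{s,p}(\T^d)$ with its periodic extension. We fix a smooth partition of unity $\{\chi_j\}_{j=1}^{J}$ on $\T^d$ subordinate to small coordinate balls, and regard each $\chi_j f$ as a compactly supported function on $\R^d$. For every $\sigma\ge0$ and $1\le q\le\infty$ we have
\[
\|f\|_{W^{\sigma,q}(\T^d)}\sim\sum_{j=1}^{J}\|\chi_j f\|_{W^{\sigma,q}(\R^d)}.
\]
For $q<\infty$ and integer $\sigma$ this is elementary. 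For fractional $\sigma$ it follows from Lemma~\ref{Localization}, using the realization $W^{\sigma,q}=F^{\sigma}_{q,2}$ for $1<q<\infty$, or $B^{\sigma}_{q,q}$ for the Slobodeckij scale; it is routine but must be checked in the endpoint cases $q\in\{1,\infty\}$.

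\emph{Sufficiency.} Assume (\ref{fails}) fails. Apply the $\R^d$ inequality of \cite{24} to each $\chi_j f$:
\[
\|\chi_j f\|_{W^{s,p}}\lesssim\|\chi_j f\|_{W^{s_1,p_1}}^{\theta}\,\|\chi_j f\|_{W^{s_2,p_2}}^{1-\theta}.
\]
Multiplication by a fixed smooth cutoff is bounded on every $W^{\sigma,q}(\R^d)$. This gives $\|\chi_j f\|_{W^{s_i,p_i}(\R^d)}\lesssim\|f\|_{W^{s_i,p_i}(\T^d)}$, and summing over $j$ yields the claim. If one prefers an intrinsic proof in the non-endpoint range $1<p_1,p_2<\infty$, the plan would instead be a direct Littlewood--Paley argument on $\T^d$. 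For each dyadic $N$ one bounds the block by
\[
\|\Delta_N f\|_{L^p}\le\|\Delta_N f\|_{L^{p_1}}^{\theta}\|\Delta_N f\|_{L^{p_2}}^{1-\theta}
\]
(H\"older), inserts the weights $N^{s}=N^{\theta s_1}N^{(1-\theta)s_2}$, and then sums the blocks through the square-function characterization of $W^{s,p}$ and H\"older in the discrete sum. The genuinely delicate cases are $p_i\in\{1,\infty\}$ and integer $s_i$; there we rely on \cite{24}.

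\emph{Necessity.} Assume (\ref{fails}) holds: $p_2=1$, $s_2\ge1$ is an integer, and $s_2-s_1\le 1-\frac1{p_1}$. We take the $\R^d$ counterexamples of \cite{24}. These are one-parameter families $g_\varepsilon$ that concentrate at a point and violate the inequality as $\varepsilon\to0$, essentially one-dimensional profiles with a jump in a derivative of order $s_2-1$. Rescale and translate them so that all are supported in a fixed small ball $B\subset(-\pi,\pi)^d$, then periodize. On functions supported in $B$ the torus norms and Euclidean norms agree up to constants. The violation in \cite{24} comes from the top-order parts of the norms. So the only point to verify is that the lower-order terms, which are present in the inhomogeneous norms, do not rescue the inequality; this is why we keep the scaling parameter inside the family rather than rescaling after the fact.

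The main obstacle is the endpoint bookkeeping in the localization step. One needs the equivalence of $W^{\sigma,q}(\T^d)$ with the sum of localized Euclidean norms for $q=1,\infty$ and fractional $\sigma$, where Lemma~\ref{Localization} does not directly apply. One also needs to confirm that the counterexamples of \cite{24} survive passage to inhomogeneous norms. For the applications in this paper only $1<p_i<\infty$ are used, so the Littlewood--Paley route above already suffices.
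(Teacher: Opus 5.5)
Your plan matches the paper's: the paper gives no proof beyond citing Brezis--Mironescu \cite{24}, and it applies that result on $\T^d$ without comment. Your localization transfer therefore supplies a step the paper skips. The endpoint bookkeeping you call the main obstacle is elementary on the Slobodeckij scale of \cite{24}. For functions supported in a small ball, the torus and Euclidean $L^q$ norms and Gagliardo (for $q=\infty$, H\"older) seminorms agree up to constants. Multiplication by a fixed smooth cutoff is bounded on $W^{\sigma,q}$ for every $1\le q\le\infty$. So Lemma~\ref{Localization} is not needed.

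The genuine gap, inherited from the statement as transcribed, is the missing ordering $s_1<s_2$ that \cite{24} presupposes. Without it, both directions you import are false.

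For necessity, take $s_1=s_2=1$, $p_1=2$, $p_2=1$, $\theta=\tfrac12$, so $p=\tfrac43$. Then \eqref{fails} holds, yet $\|g\|_{L^{4/3}}\le\|g\|_{L^2}^{1/2}\|g\|_{L^1}^{1/2}$ applied to $g=f$ and $g=\nabla f$ proves the inequality. There are no counterexamples to import.

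For sufficiency, take $(s_1,p_1)=(1,1)$ and $(s_2,p_2)=(0,\infty)$, so $s=\theta$ and $p=1/\theta$. Here \eqref{fails} fails because $s_2=0$, but $\|f\|_{W^{\theta,1/\theta}}\lesssim\|f\|_{W^{1,1}}^{\theta}\|f\|_{L^\infty}^{1-\theta}$ is false. A smoothly cut-off ramp of width $\varepsilon$ across a hyperplane has bounded $W^{1,1}$ and $L^\infty$ norms, while its $W^{\theta,1/\theta}$ seminorm grows like $(\log\frac1\varepsilon)^{\theta}$. This is the exceptional case of \cite{24} with the two endpoints swapped.

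So you must assume $s_1<s_2$, i.e.\ test \eqref{fails} on the higher-order endpoint. Under that hypothesis your argument goes through, and the worry about lower-order terms is moot, since \cite{24} already uses inhomogeneous norms. What you do need is that violating families can be taken supported in a fixed ball. This follows by contraposition from $\|f\|_{W^{\sigma,q}}^q\sim\sum_k\|\psi_kf\|_{W^{\sigma,q}}^q$ for a bounded-overlap partition of unity $\{\psi_k\}$ (sup for $q=\infty$), together with H\"older in $k$ with exponents $\frac{p_1}{\theta p}$ and $\frac{p_2}{(1-\theta)p}$. A fixed dilation and translation then cost only constants.

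Separately, your closing claim that the paper only needs $1<p_i<\infty$ is wrong. In Case~2 of Section~5 the lemma is used with $p_2=\infty$, interpolating between $H^{\alpha}$ and $W^{\gamma-\gamma_0,\infty}$; the latter stands there for $\||D|^{\gamma-\gamma_0}u\|_{L^\infty}$. The square-function characterization does not reach that endpoint, so the Littlewood--Paley fallback does not cover the use the paper actually makes.

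As sketched, that fallback also has its steps in the wrong order: blockwise H\"older in $L^p$ does not reassemble into a square-function norm. For $F^{s}_{p,2}$ one applies H\"older pointwise in $\ell^2_N$ and then in $L^p_x$. Blockwise H\"older followed by H\"older in $\ell^p_N$ is the right order only for $B^{s}_{p,p}$. Neither scale alone matches the $W^{s,p}$ of \cite{24} when integer and fractional orders are mixed.
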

		In order to use simpler fractional Leibniz rules, we assume that \(\alpha<4\) and the general case can be similarly solved by applying Theorem \ref{fractional Leibniz 2}. If \(\alpha>2\), then we apply Corollary \ref{aaa} and Corollary \ref{fractional Leibniz 1} to obtain
		\begin{equation*}
			I_1\lesssim \||D|^{\frac{\alpha}{2}}u\|_{L^{4}(\T^d)}^2+\||D|^{\alpha-2}(\nabla u\cdot \nabla\bar{u})\|_{L^{2}(\T^d)}.
		\end{equation*}
		\begin{equation*}
			\lesssim \||D|^{\frac{\alpha}{2}}u\|_{L^{4}(\T^d)}^2+\|(|D|^{\alpha-2}\nabla u) \cdot\nabla\bar{u}\|_{L^2(\T^d)}+\|(|D|^{\alpha-2}\nabla \bar{u}) \cdot\nabla u\|_{L^2(\T^d)}
		\end{equation*}
		\begin{equation*}
			\lesssim \||D|^{\frac{\alpha}{2}}u\|_{L^{4}(\T^d)}^2+\|u\|_{W^{\alpha-1, \frac{2(\alpha-\gamma+\gamma_{0})}{\alpha-1-\gamma+\gamma_{0}}}(\T^d)}\|u\|_{W^{1,\frac{2(\alpha-\gamma+\gamma_{0})}{1-\gamma+\gamma_{0}}}(\T^d)},
		\end{equation*}
		where we can suppose that $\min\lbrace \alpha-1, 1\rbrace>\gamma-\gamma_{0}>0$. Otherwise, we can directly use $\|u\|_{W^{\gamma-\gamma_{0},\infty}(\T^d)}$ to control the last two terms.
		
		Using the Gagliardo-Nirenberg inequality, we can further derive
		\begin{equation*}
			I_1\lesssim\||D|^{\alpha-\gamma+\gamma_{0}}u\|_{L^{2}(\T^d)}\||D|^{\gamma-\gamma_{0}}u\|_{L^{\infty}(\T^d)}
		\end{equation*}
		\begin{equation*}
			+\left(\|u\|_{H^{\alpha}(\T^d)}^{\frac{\alpha-1-\gamma+\gamma_{0}}{\alpha-\gamma+\gamma_{0}}}\|u\|_{W^{\gamma-\gamma_{0}, \infty}(\T^d)}^{\frac{1}{\alpha-\gamma+\gamma_{0}}}\right)\left(\|u\|_{H^{\alpha}(\T^d)}^{\frac{1-\gamma+\gamma_{0}}{\alpha-\gamma+\gamma_{0}}}\|u\|_{W^{\gamma-\gamma_{0},\infty}(\T^d)}^{\frac{\alpha-1}{\alpha-\gamma+\gamma_{0}}}\right)
		\end{equation*}
		\begin{equation*}
			\lesssim \|u\|_{H^{\alpha}(\T^d)}^{1-\frac{2(\gamma-\gamma_{0})}{\alpha}}\|u\|_{H^{\frac{\alpha}{2}}(\T^d)}^{\frac{2(\gamma-\gamma_{0})}{\alpha}}\|u\|_{W^{\gamma-\gamma_{0},\infty}(\T^d)}+\|u\|_{H^{\alpha}(\T^d)}^{1-\frac{\gamma-\gamma_{0}}{\alpha-\gamma+\gamma_{0}}}\|u\|_{W^{\gamma-\gamma_{0},\infty}(\T^d)}^{\frac{\alpha}{\alpha-\gamma+\gamma_{0}}}.
		\end{equation*}
		
		Similarly, if \(\alpha\le 2\), then we only need to apply Corollary \ref{fractional Leibniz 1}; the term \(\||D|^{\alpha-2}(\nabla u\cdot \nabla\bar{u})\|_{L^{2}(\T^d)}\) no longer appears, leading to the bound 
		\begin{equation*}
			I_1\lesssim \|u\|_{H^{\alpha}(\T^d)}^{1-\frac{2(\gamma-\gamma_{0})}{\alpha}}\|u\|_{H^{\frac{\alpha}{2}}(\T^d)}^{\frac{2(\gamma-\gamma_{0})}{\alpha}}\|u\|_{W^{\gamma-\gamma_{0},\infty}(\T^d)}
		\end{equation*}
		
		Before establishing the polynomial growth, we need another variant of Gronwall's inequality.
		\begin{lemma}\label{Variant2}
			Let $0<\lambda_k\le 1$, $A_k\ge0$ and $p_k\ge 1$, $k=1,2,\cdots,m$. If $f\in C^{1}([0,+\infty))$, $g_{k}\in L^{p_k}_{loc}([0,+\infty))$ satisfy
			\[
			f'(t)\le \sum_{k=1}^{m} f(t)^{1-\lambda_k} g_k(t),\quad \|g_k\|_{L^{p_k}([0,t])}\lesssim\langle t\rangle^{A_k},
			\]
			with $f(t), g_k(t)\ge0$, then we have the following bound:
			\[
			f(t)\lesssim \langle t\rangle^{\Gamma}, \quad 
			\Gamma:=\max_{1\le k\le m}\left\{\frac{A_k+1/p_k'}{\lambda_k}\right\}.
			\]
		\end{lemma}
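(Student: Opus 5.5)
We will argue by a barrier (first-crossing) argument in integrated form, in the spirit of Lemma~\ref{Variant}. The difference is that the coefficients $g_k$ are only controlled in $L^{p_k}$ over $[0,t]$, not pointwise, so we cannot compare derivatives at a single crossing time and must compare integrals instead.

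Fix $A$ large, with $A\gg f(0)$, and set $F(t):=A\langle t\rangle^{\Gamma}$. Suppose for contradiction that the set $\{t\ge0: f(t)\ge F(t)\}$ is nonempty, and let $t_0$ be its infimum. By continuity, $f(t_0)=F(t_0)$ and $f\le F$ on $[0,t_0]$; since $f(0)<F(0)$ we have $t_0>0$. We integrate the differential inequality on $[0,t_0]$ and use $1-\lambda_k\ge0$ to replace $f$ by the larger function $F$:
\[
F(t_0)=f(t_0)\le f(0)+\sum_{k=1}^m\int_0^{t_0}F(s)^{1-\lambda_k}g_k(s)\,ds\le f(0)+\sum_{k=1}^m A^{1-\lambda_k}\langle t_0\rangle^{(1-\lambda_k)\Gamma}\int_0^{t_0}g_k(s)\,ds .
\]
By H\"older's inequality and the hypothesis,
\[
\int_0^{t_0}g_k\le t_0^{1/p_k'}\|g_k\|_{L^{p_k}([0,t_0])}\lesssim\langle t_0\rangle^{A_k+1/p_k'}.
\]
Hence the right-hand side is at most
\[
f(0)+C\sum_k A^{1-\lambda_k}\langle t_0\rangle^{(1-\lambda_k)\Gamma+A_k+1/p_k'}.
\]
By the choice of $\Gamma$ we have $A_k+1/p_k'\le\lambda_k\Gamma$, so each exponent is at most $\Gamma$. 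We therefore obtain
\[
A\langle t_0\rangle^{\Gamma}\le f(0)+C\sum_k A^{1-\lambda_k}\langle t_0\rangle^{\Gamma}.
\]
Dividing by $\langle t_0\rangle^{\Gamma}\ge1$ gives $A\le f(0)+C\sum_kA^{1-\lambda_k}$.

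The main point to check is that this final inequality fails for $A$ large, uniformly in $t_0$. This holds because $\lambda_k>0$, so $A^{1-\lambda_k}=o(A)$, and the implicit constants (from the bound on $\|g_k\|_{L^{p_k}}$) do not depend on $t_0$. Choosing $A\ge 2f(0)$ and $A\ge 2C\sum_kA^{1-\lambda_k}$ therefore gives a contradiction. Hence $f(t)<A\langle t\rangle^\Gamma$ for all $t\ge 0$, which is the claim.

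The endpoint cases need a word. When $\lambda_k=1$ the corresponding factor $F^{0}$ is harmless. When $p_k=1$ we have $1/p_k'=0$, and the H\"older step is trivial.
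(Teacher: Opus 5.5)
Your proof is correct and is essentially the paper's argument. Both proofs integrate the differential inequality, dominate $f^{1-\lambda_k}$ by the majorant $\langle s\rangle^{\Gamma}$, apply H\"older with the $L^{p_k}$ bound on $g_k$, and close using $A^{1-\lambda_k}=o(A)$. The paper phrases this through the finite quantity $M(T)=\sup_{0\le s\le T}f(s)\langle s\rangle^{-\Gamma}$, which satisfies $M(T)\le C_1+C_2\sum_k M(T)^{1-\lambda_k}$, whereas you use an equivalent first-crossing barrier. One small fix: make one of your two conditions on $A$ strict (e.g.\ $A>2f(0)$), since as written they only yield $A\le A$.
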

		
		\begin{proof}
			Integrating the differential inequality gives
			\[
			f(t)\le f(0)+\sum_{k=1}^{m}
			\int_0^t f(s)^{1-\lambda_k} g_k(s)\,ds.
			\]
			Fix $T>0$ and set
			\[
			M(T):=\sup_{0\le s\le T}\frac{f(s)}{\langle s\rangle^{\Gamma}}.
			\]
			Then for $s\le T$,
			\[
			f(s)\le M(T)\langle s\rangle^{\Gamma}.
			\]
			Substituting this into the integral inequality yields
			\[
			f(t)
			\le
			f(0)
			+
			\sum_{k=1}^{m}
			M(T)^{1-\lambda_k}
			\int_0^t
			\langle s\rangle^{\Gamma(1-\lambda_k)}
			g_k(s)\,ds.
			\]
			By H\"{o}lder's inequality,
			\[
			\int_0^t
			\langle s\rangle^{\Gamma(1-\lambda_k)}
			g_k(s)\,ds
			\le
			\|g_k\|_{L^{p_k}([0,t])}
			\,
			\left\|\langle s\rangle^{\Gamma(1-\lambda_k)}\right\|_{L^{p_k'}([0,t])}
			\lesssim
			\langle t\rangle^{A_k+\Gamma(1-\lambda_k)+1/p_k'}.
			\]
			Because of the choice $\Gamma=\max_{1\le k\le m}
			\left\{\frac{A_k+1/p_k'}{\lambda_k}\right\}$, we have 
			\[
			A_k+\Gamma(1-\lambda_k)+\frac1{p_k'}
			\le \Gamma, \quad \forall k=1,2,\cdots, m.
			\]
			Therefore
			\[
			f(t)
			\lesssim
			f(0)
			+
			\sum_{k=1}^{m}
			M(T)^{1-\lambda_k}
			\langle t\rangle^{\Gamma}.
			\]
			Dividing by $\langle t\rangle^{\Gamma}$ and taking the supremum over $0\le t\le T$ gives
			\[
			M(T)
			\le
			C_1+C_2\sum_{k=1}^{m} M(T)^{1-\lambda_k}.
			\]
			Since $1-\lambda_k<1$, the right-hand side is sublinear in $M(T)$; hence $M(T)$ remains bounded uniformly in $T$. The conclusion follows.
		\end{proof}
		
		Combining all the estimates above, we conclude
		\begin{equation*}
			\Big|\frac{d}{dt}\mathcal{E}_{\alpha,0}(u)\Big|
			\lesssim 
			\|u\|_{H^{\alpha}(\T^d)}^{2-\varepsilon_{\alpha,0}}
			\|u\|_{L^{\infty}(\T^d)}^{4}
			+ 
			\|u\|_{H^{\alpha}(\T^d)}^{2-\frac{2(\gamma-\gamma_{0})}{\alpha}}
			\|u\|_{W^{\gamma-\gamma_{0},\infty}(\T^d)}
			\|u\|_{L^{\infty}(\T^d)}
		\end{equation*}
		\begin{equation*}
			+
			\|u\|_{H^{\alpha}(\T^d)}^{2-\frac{\gamma-\gamma_{0}}{\alpha-\gamma+\gamma_{0}}}
			\|u\|_{W^{\gamma-\gamma_{0},\infty}(\T^d)}^{\frac{\alpha}{\alpha-\gamma+\gamma_{0}}}
			\|u\|_{L^{\infty}(\T^d)},
			\qquad \alpha>2,
		\end{equation*}
		and
		\begin{equation*}
			\Big|\frac{d}{dt}\mathcal{E}_{\alpha,0}(u)\Big|
			\lesssim 
			\|u\|_{H^{\alpha}(\T^d)}^{2-\varepsilon_{\alpha,0}}
			\|u\|_{L^{\infty}(\T^d)}^{4}
			+
			\|u\|_{H^{\alpha}(\T^d)}^{2-\frac{2(\gamma-\gamma_{0})}{\alpha}}
			\|u\|_{W^{\gamma-\gamma_{0},\infty}(\T^d)}
			\|u\|_{L^{\infty}(\T^d)},
			\qquad \alpha \le 2.
		\end{equation*}
		
		Here we may take $\gamma$ sufficiently close to $\gamma_{0}$ so that 
		\[
		\frac{\alpha}{\alpha-\gamma+\gamma_{0}}\le 3.
		\]
		Then, applying estimate~(\ref{new year}) in Remark~\ref{original} together with Lemma~\ref{Variant2}, we obtain the polynomial growth of the $H^{\alpha}$-norm.
		
		It is worth mentioning that, in order to remove the technical condition $p\ge 2$, we need an alternative estimate for the first term $I$ in~(\ref{qq}), which produces the bound 
		\[
		\|u\|_{H^{\alpha+n}(\T^d)}^{2-\varepsilon_{\alpha,0}}
		\|u\|_{L^{\infty}(\T^d)}^{4}.
		\]
		
		In fact, substituting $\dot{u}=-i(|u|^2u+|D|^{\alpha}u)$ into $I$, instead of using $|D|^{\alpha}u=i\dot{u}-|u|^2u$, and applying the Sobolev embedding 
		$H^{\frac{2d}{5}}(\T^d)\hookrightarrow L^{10}(\T^d)$,
		we obtain
		\begin{equation*}
			|I|
			=\big|2\Im m \left(|D|^{\alpha}u, |u|^{4}u\right)\big|
			\lesssim 
			\|u\|_{H^{\alpha}(\T^d)}
			\|u\|_{L^{10}(\T^d)}^{5}
			\lesssim 
			\|u\|_{H^{\alpha}(\T^d)}^{2-\frac{6}{\alpha}\left(\alpha-\frac{2d}{3}\right)},
			\qquad 
			\forall \alpha\in \left(\frac{2d}{3},d\right].
		\end{equation*}
		
		Taking $\gamma$ sufficiently close to $\gamma_{0}$, we see that for any $p>1$, the $H^{\alpha}$-norm exhibits polynomial growth.
		
		\vspace{10pt}
		Now we use this polynomial growth bound for the \(H^{\alpha}\)-norm to control the growth of the \(H^{\alpha+n}\)-norm. The argument proceeds as in the case \(\alpha>d\). The only difference is that we replace the \(H^{\frac{\alpha}{2}}\)-norm and the \(L^{\infty}\)-norm by the \(H^{\alpha}\)-norm.
		
		Specifically, for the second term \(II:=2\Re e\big(\partial_{x}^{\beta}|D|^{\alpha}u, (\partial_{x}^{\rho}\dot{u})(\partial_{x}^{\beta-\rho}|u|^2)\big)\) in (\ref{qq}), we now obtain
		\begin{equation*}
			|II|\lesssim \|u\|_{H^{\alpha+n}(\T^d)}^{2-\frac{2\alpha-d}{2n}}\|u\|_{H^{\alpha}(\T^d)}^{1+\frac{2\alpha-d}{2n}}\|u\|_{L^{\infty}(\T^d)}\lesssim \|u\|_{H^{\alpha+n}(\T^d)}^{2-\frac{2\alpha-d}{2n}}\|u\|_{H^{\alpha}(\T^d)}^{2+\frac{2\alpha-d}{2n}}.
		\end{equation*}
		Moreover, for \(J_2:=\|(|u|^2)^{\cdot}\|_{H^{n}(\T^d)}\) in the structure (\ref{Leibniz structure}), we derive
		\begin{equation*}
			J_2\lesssim \|u\|_{H^{\alpha}(\T^d)}\|u\|_{H^{\alpha+n}(\T^d)}+\|u\|_{H^{\alpha+n}(\T^d)}^{1-\frac{2\alpha-d}{2n}}\|u\|_{H^{\alpha}(\T^d)}^{1+\frac{2\alpha-d}{2n}}.
		\end{equation*}
		Combining all the estimates above, we finally conclude that
		\begin{equation*}
			\Big|\frac{d}{dt}\mathcal{E}_{\alpha,n}(u)\Big|\lesssim \|u\|_{H^{\alpha+n}(\T^d)}^{2-\varepsilon_{\alpha,n}}\|u\|_{H^{\alpha}(\T^d)}^{4}+\|u\|_{H^{\alpha+n}(\T^d)}^{2-\frac{2\alpha-d}{2n}}\|u\|_{H^{\alpha}(\T^d)}^{2+\frac{2\alpha-d}{2n}}
		\end{equation*}
		\begin{equation*}
			+\|u\|_{H^{\alpha}(\T^d)}^{1+\frac{2\alpha-d}{2n}}\|u\|_{H^{\alpha+n}(\T^d)}^{1-\frac{2\alpha-d}{2n}}\left(\|u\|_{H^{\alpha}(\T^d)}\|u\|_{H^{\alpha+n}(\T^d)}+\|u\|_{H^{\alpha+n}(\T^d)}^{1-\frac{2\alpha-d}{2n}}\|u\|_{H^{\alpha}(\T^d)}^{1+\frac{2\alpha-d}{2n}}\right).
		\end{equation*}
		Then, applying the variant of Gronwall's inequality in Lemma \ref{Variant}, we obtain
		the polynomial growth of $H^{\alpha+n}$-norm.
		
		\begin{rem}\label{2sigma}
			The derivation of polynomial growth for equation (\ref{fNLS}) with the general nonlinear term \(|u|^{2\sigma}u\) requires a new modified energy. A direct analogue of \(\mathcal{E}_{\alpha,n}(u)\) defined earlier does not work in the non-cubic case. In fact, one may first consider the following modified energy:
			\[
			\widetilde{\mathcal{E}}_{\alpha,n}(u) := \|u\|_{L^{2}(\mathbb{T}^d)}^2 + \||D|^{\alpha+n} u\|_{L^{2}(\mathbb{T}^d)}^2 +  \widetilde{J}_1(u)+\widetilde{J}_2(u), 
			\]
			where \(\widetilde{J}_{1}(u)\) and \(\widetilde{J}_2(u)\) are given by 
			\begin{equation*}
				\widetilde{J}_{1}(u):=2\Re e\bigl( |D|^{\alpha+n} u, |D|^{n} (|u|^{2\sigma} u)  \bigr), \quad 
				\widetilde{J}_2(u):= - \frac{1}{2}\bigl\| |D|^{\frac{\alpha}{2}+n} (|u|^{2\sigma}) \bigr\|_{L^{2}(\mathbb{T}^d)}^2.
			\end{equation*}
			However, the difficulty lies in the fact that the third term \(2\Re e \left(\partial_{x}^{\beta}|D|^{\alpha}u, u\partial_{x}^{\beta}(|u|^{2})^{\cdot}\right)\) in (\ref{eee}) is now replaced by \(2\Re e \left(\partial_{x}^{\beta}|D|^{\alpha}u, u\partial_{x}^{\beta}(|u|^{2\sigma})^{\cdot}\right)\). 
			
			If we proceed with the same method and sum over \(|\beta|=n\), we can only reduce it to \(2\Re e \left(|D|^{n}(\bar{u}|D|^{\alpha}u), |D|^{n}(|u|^{2\sigma})^{\cdot}\right)\), which is incompatible with the time derivative of \(\widetilde{J}_2(u)\); i.e., the structure (\ref{Leibniz structure}) no longer holds.
			
			To overcome this difficulty, we introduce a new modified energy as follows:
			\begin{equation*}
				\widetilde{\mathcal{E}}_{\alpha,n}(u):=\|u\|_{L^{2}(\T^d)}^2+\||D|^{\alpha+n}u\|_{L^{2}(\T^d)}^2+\widetilde{J}_1(u)+\widetilde{J}_2(u)+\widetilde{J}_3(u),
			\end{equation*}
			where \(\widetilde{J}_1(u), \widetilde{J}_2(u), \widetilde{J}_3(u)\) are given by 
			\begin{equation*}
				\widetilde{J}_1(u):=2\Re e\left(|D|^{\alpha+n}u, |D|^{n}(|u|^{2\sigma}u)\right), 
			\end{equation*}
			\begin{equation*}
				\widetilde{J}_2(u)=\sum_{|\beta|=n}\widetilde{J}_{2}^{\beta}(u):=\sum_{|\beta|=n}\left(\partial_{x}^{\beta}|D|^{\frac{\alpha}{2}}(|u|^{2})-\bar{u}\partial_{x}^{\beta}|D|^{\frac{\alpha}{2}}u-u\partial_{x}^{\beta}|D|^{\frac{\alpha}{2}}\bar{u}, |D|^{\frac{\alpha}{2}}\partial_{x}^{\beta}(|u|^{2\sigma})\right),
			\end{equation*}
			\begin{equation*}
				\widetilde{J}_3(u)=\sum_{|\beta|=n}\widetilde{J}_{3}^{\beta}(u):=\sum_{|\beta|=n}-\frac{\sigma}{2}\left(\big||D|^{\frac{\alpha}{2}}\partial_{x}^{\beta}(|u|^{2})\big|^2, |u|^{2(\sigma-1)}\right).
			\end{equation*}
			It should be mentioned that this new construction is partially inspired by \cite{20}, while Thirouin's approach in \cite{1} is restricted to the cubic nonlinear term \(|u|^2u\).
			
			For simplicity, we only consider the case \(\alpha>d\), where the boundedness of the \(L^{\infty}\)-norm follows directly from the conservation of mass and energy.
			
			We first verify that the new modified energy \(\widetilde{\mathcal{E}}_{\alpha,n}(u)\) is equivalent to the \(H^{\alpha+n}\)-norm when the latter is sufficiently large.
			
			For \(\widetilde{J}_{1}(u)\), we immediately obtain 
			\[
			|\widetilde{J}_1(u)|\lesssim \|u\|_{H^{\alpha+n}(\T^d)}\|u\|_{H^{n}(\T^d)}\|u\|_{L^{\infty}(\T^d)}^{2\sigma}\lesssim \|u\|_{H^{\alpha+n}(\T^d)}^{2-\varepsilon_{\alpha,n}}\|u\|_{H^{\frac{\alpha}{2}}(\T^d)}^{\varepsilon_{\alpha,n}}\|u\|_{L^{\infty}(\T^d)}^{2\sigma},
			\]
			where \(\varepsilon_{\alpha,n}=\min\bigl\{\frac{2\alpha}{2n+\alpha},1\bigr\}\).
			
			For \(\widetilde{J}_2(u)\), we can also directly derive 
			\begin{equation*}
				|\widetilde{J}_2(u)|\lesssim \|u\|_{H^{n+\frac{\alpha}{2}}(\T^d)}^{2}\|u\|_{L^{\infty}(\T^d)}^{2\sigma}\lesssim\|u\|_{H^{\alpha+n}(\T^d)}^{2-\varepsilon_{\alpha,n}}\|u\|_{H^{\frac{\alpha}{2}}(\T^d)}^{\varepsilon_{\alpha,n}}\|u\|_{L^{\infty}(\T^d)}^{2\sigma}.
			\end{equation*}
			For \(\widetilde{J}_{3}(u)\), the same bound \(\|u\|_{H^{\alpha+n}(\T^d)}^{2-\varepsilon_{\alpha,n}}\|u\|_{H^{\frac{\alpha}{2}}(\T^d)}^{\varepsilon_{\alpha,n}}\|u\|_{L^{\infty}(\T^d)}^{2\sigma}\) continues to hold.
			
			Thus, the new modified energy \(\widetilde{\mathcal{E}}_{\alpha,n}(u)\) is essentially equivalent to the \(H^{\alpha+n}\)-norm.
			
			Next, following the estimates in \textbf{Case 1}, the time derivative of \(\widetilde{J}_0(u)+\widetilde{J}_1(u)\) can be reduced to 
			\begin{equation*}
				\frac{d}{dt}[\widetilde{J}_0+\widetilde{J}_1](u)=\sum_{|\beta|=n}2\Re e \left(\partial_{x}^{\beta}|D|^{\alpha}u, u\partial_{x}^{\beta}(|u|^{2\sigma})^{\cdot}\right)+ (\ast), 
			\end{equation*}
			where we denote \(\widetilde{J}_0(u):=\||D|^{\alpha+n}u\|_{L^{2}(\T^d)}^2\), and \((\ast)\) represents terms that can be bounded by \(\|u\|_{H^{\alpha+n}(\T^d)}^{2-\theta_{\alpha,n}}\), with \(\theta_{\alpha, n}=\frac{\alpha-d}{2n+\alpha}>0\).
			
			Now for fixed \(\beta\), we can write 
			\begin{equation*}
				2\Re e \left(\partial_{x}^{\beta}|D|^{\alpha}u, u\partial_{x}^{\beta}(|u|^{2\sigma})^{\cdot}\right)= 2\Re e \left(\partial_{x}^{\beta}|D|^{\frac{\alpha}{2}}u, |D|^{\frac{\alpha}{2}}\big[u\cdot \partial_{x}^{\beta}(|u|^{2\sigma})^{\cdot}\big]\right)
			\end{equation*}
			\begin{equation*}
				= 2\Re e \left(\partial_{x}^{\beta}|D|^{\frac{\alpha}{2}}u, u|D|^{\frac{\alpha}{2}}\partial_{x}^{\beta}(|u|^{2\sigma})^{\cdot}\right)+2\Re e \left(\partial_{x}^{\beta}|D|^{\frac{\alpha}{2}}u, (|D|^{\frac{\alpha}{2}}u)\cdot\partial_{x}^{\beta}(|u|^{2\sigma})^{\cdot}\right)
			\end{equation*}
			\begin{equation*}
				+2\Re e \left(\partial_{x}^{\beta}|D|^{\frac{\alpha}{2}}u, |D|^{\frac{\alpha}{2}}\big[u\cdot \partial_{x}^{\beta}(|u|^{2\sigma})^{\cdot}\big]-(|D|^{\frac{\alpha}{2}}u)\cdot\partial_{x}^{\beta}(|u|^{2\sigma})^{\cdot}-u|D|^{\frac{\alpha}{2}}\partial_{x}^{\beta}(|u|^{2\sigma})^{\cdot}\right):=I_{1}+I_{2}+I_{3}.
			\end{equation*}
			
			To apply the simplest fractional Leibniz rule, i.e., Corollary \ref{fractional Leibniz 1}, we assume that \(\alpha\le 4\). The general case can be solved similarly by applying Theorem \ref{fractional Leibniz 2} or Corollary \ref{aaa} and appropriately moving the regularity. 
			
			Under this assumption, we can apply the fractional Leibniz rule to obtain
			\begin{equation*}
				|I_{3}|\lesssim \left\|\partial_{x}^{\beta}|D|^{\frac{\alpha}{2}}u\right\|_{L^{4}(\T^d)}\left\||D|^{\frac{\alpha}{2}}\big[u\cdot\partial_{x}^{\beta} (|u|^{2\sigma})^{\cdot}\big]-(|D|^{\frac{\alpha}{2}}u)\cdot\partial_{x}^{\beta}(|u|^{2\sigma})^{\cdot}-u|D|^{\frac{\alpha}{2}}\partial_{x}^{\beta}(|u|^{2\sigma})^{\cdot}\right\|_{L^{\frac{4}{3}}(\T^d)}
			\end{equation*}
			\begin{equation*}
				\lesssim \left\|\partial_{x}^{\beta}|D|^{\frac{\alpha}{2}}u\right\|_{L^{4}(\T^d)}\left\||D|^{\frac{\alpha}{2}}u\right\|_{L^{4}(\T^d)}\left\|\partial_{x}^{\beta}(|u|^{2\sigma})^{\cdot}\right\|_{L^{2}(\T^d)}
			\end{equation*}
			\begin{equation*}
				\lesssim \left\|u\right\|_{H^{n+\frac{\alpha}{2}+\frac{d}{4}}(\T^d)} \left\|u\right\|_{H^{\frac{\alpha}{2}+\frac{d}{4}}(\T^d)}\|u\|_{H^{n+\alpha}(\T^d)}\lesssim \|u\|_{H^{\alpha+n}(\T^d)}^{2-\theta_{\alpha,n}}\|u\|_{H^{\frac{\alpha}{2}}(\T^d)}^{1+\theta_{\alpha,n}}.
			\end{equation*}
			For \(I_{2}\), we can directly apply H\"{o}lder's inequality to obtain the bound \(|I_{2}|\lesssim \|u\|_{H^{\alpha+n}(\T^d)}^{2-\theta_{\alpha,n}}\|u\|_{H^{\frac{\alpha}{2}}(\T^d)}^{1+\theta_{\alpha,n}}\).
			
			We now reduce to estimating \(I_{1}\) and further decompose it into the following two terms:
			\begin{equation*}
				I_{1}:=\left(\bar{u}\partial_{x}^{\beta}|D|^{\frac{\alpha}{2}}u+u\partial_{x}^{\beta}|D|^{\frac{\alpha}{2}}\bar{u}-\partial_{x}^{\beta}|D|^{\frac{\alpha}{2}}(|u|^{2}), |D|^{\frac{\alpha}{2}}\partial_{x}^{\beta}(|u|^{2\sigma})^{\cdot}\right)
			\end{equation*}
			\begin{equation*}
				+\left(|D|^{\frac{\alpha}{2}}\partial_{x}^{\beta}(|u|^{2}), |D|^{\frac{\alpha}{2}}\partial_{x}^{\beta}(|u|^{2\sigma})^{\cdot} \right):=I_{4}+I_{5}.
			\end{equation*}
			For the first term \(I_{4}\), we can write
			\begin{equation*}
				I_{4}=\frac{d}{dt}\left(\bar{u}\partial_{x}^{\beta}|D|^{\frac{\alpha}{2}}u+u\partial_{x}^{\beta}|D|^{\frac{\alpha}{2}}\bar{u}-\partial_{x}^{\beta}|D|^{\frac{\alpha}{2}}(|u|^{2}), |D|^{\frac{\alpha}{2}}\partial_{x}^{\beta}(|u|^{2\sigma})\right)
			\end{equation*}
			\begin{equation*}
				+\left(\partial_{x}^{\beta}|D|^{\frac{\alpha}{2}}(|u|^{2})^{\cdot}-(\bar{u}\partial_{x}^{\beta}|D|^{\frac{\alpha}{2}}u)^{\cdot}-(u\partial_{x}^{\beta}|D|^{\frac{\alpha}{2}}\bar{u})^{\cdot}, |D|^{\frac{\alpha}{2}}\partial_{x}^{\beta}(|u|^{2\sigma})\right):=I_{6}+I_{7}.
			\end{equation*}
			Note that \(I_{6}=-\frac{d}{dt}\widetilde{J}_{2}^{\beta}(u)\) and we can decompose \(I_7\) and regroup as follows:
			\begin{equation*}
				I_7=\left(\partial_{x}^{\beta}|D|^{\frac{\alpha}{2}}(\dot{u}\bar{u})-\bar{u}\partial_{x}^{\beta}|D|^{\frac{\alpha}{2}}\dot{u}-\dot{u}\partial_{x}^{\beta}|D|^{\frac{\alpha}{2}}\bar{u}, |D|^{\frac{\alpha}{2}}\partial_{x}^{\beta}(|u|^{2\sigma})\right)
			\end{equation*}
			\begin{equation*}
				+\left(\partial_{x}^{\beta}|D|^{\frac{\alpha}{2}}(u\dot{\bar{u}})-u\partial_{x}^{\beta}|D|^{\frac{\alpha}{2}}\dot{\bar{u}}-\dot{\bar{u}}\partial_{x}^{\beta}|D|^{\frac{\alpha}{2}}u, |D|^{\frac{\alpha}{2}}\partial_{x}^{\beta}(|u|^{2\sigma})\right):=I_8+I_9.
			\end{equation*}
			By symmetry, it suffices to deal with \(I_8\). We further denote
			\begin{equation*}
				I_8=\big(\partial_{x}^{\beta}|D|^{\frac{\alpha}{2}}(\dot{u}\bar{u}),|D|^{\frac{\alpha}{2}}\partial_{x}^{\beta}(|u|^{2\sigma})\big)-\big(\bar{u}\partial_{x}^{\beta}|D|^{\frac{\alpha}{2}}\dot{u},|D|^{\frac{\alpha}{2}}\partial_{x}^{\beta}(|u|^{2\sigma})\big)-\big(\dot{u}\partial_{x}^{\beta}|D|^{\frac{\alpha}{2}}\bar{u},|D|^{\frac{\alpha}{2}}\partial_{x}^{\beta}(|u|^{2\sigma})\big)
			\end{equation*}
			\begin{equation*}
				:=I_{10}+I_{11}+I_{12}.
			\end{equation*}
			Then we can decompose \(I_{11}\) as follows:
			\begin{equation*}
				I_{11}=-\left(|D|^{\frac{\alpha}{2}}(\bar{u} \partial_{x}^{\beta}\dot{u}), |D|^{\frac{\alpha}{2}}\partial_{x}^{\beta}(|u|^{2\sigma}) \right)+\big((|D|^{\frac{\alpha}{2}}\bar{u})\cdot \partial_{x}^{\beta}\dot{u}, |D|^{\frac{\alpha}{2}}\partial_{x}^{\beta}(|u|^{2\sigma})\big)
			\end{equation*}
			\begin{equation*}
				+\big(|D|^{\frac{\alpha}{2}}(\bar{u} \partial_{x}^{\beta}\dot{u})-\bar{u}\partial_{x}^{\beta}|D|^{\frac{\alpha}{2}}\dot{u}-(|D|^{\frac{\alpha}{2}}\bar{u})\cdot \partial_{x}^{\beta}\dot{u}, |D|^{\frac{\alpha}{2}}\partial_{x}^{\beta}(|u|^{2\sigma})\big):=I_{13}+I_{14}+I_{15}.
			\end{equation*}
			Then, applying the fractional Leibniz rule and H\"{o}lder's inequality, we obtain the following bounds:
			\begin{equation*}
				|I_{14}|\lesssim \||D|^{\frac{\alpha}{2}}u\|_{L^{4}(\T^d)}\|\partial_{x}^{\beta}\dot{u}\|_{L^{2}(\T^d)}\||D|^{\frac{\alpha}{2}}\partial_{x}^{\beta}(|u|^{2\sigma})\|_{L^{4}(\T^d)}
			\end{equation*}
			\begin{equation*}
				\lesssim \|u\|_{H^{\frac{\alpha}{2}+\frac{d}{4}}(\T^d)}\|u\|_{H^{\alpha+n}(\T^d)}\|u\|_{H^{n+\frac{\alpha}{2}+\frac{d}{4}}(\T^d)}\|u\|_{L^{\infty}(\T^d)}^{2\sigma-1}\lesssim\|u\|_{H^{\alpha+n}(\T^d)}^{2-\theta_{\alpha,n}}\|u\|_{H^{\frac{\alpha}{2}}(\T^d)}^{1+\theta_{\alpha,n}};
			\end{equation*}
			
			\begin{equation*}
				|I_{15}|\lesssim \big\||D|^{\frac{\alpha}{2}}(\bar{u} \partial_{x}^{\beta}\dot{u})-\bar{u}\partial_{x}^{\beta}|D|^{\frac{\alpha}{2}}\dot{u}-(|D|^{\frac{\alpha}{2}}\bar{u})\cdot \partial_{x}^{\beta}\dot{u}\big\|_{L^{\frac{4}{3}}(\T^d)}\||D|^{\frac{\alpha}{2}}\partial_{x}^{\beta}(|u|^{2\sigma})\|_{L^{4}(\T^d)}
			\end{equation*}
			\begin{equation*}
				\lesssim\||D|^{\frac{\alpha}{2}}u\|_{L^{4}(\T^d)}\|\partial_{x}^{\beta}\dot{u}\|_{L^{2}(\T^d)}\||D|^{\frac{\alpha}{2}}\partial_{x}^{\beta}(|u|^{2\sigma})\|_{L^{4}(\T^d)}\lesssim\|u\|_{H^{\alpha+n}(\T^d)}^{2-\theta_{\alpha,n}}\|u\|_{H^{\frac{\alpha}{2}}(\T^d)}^{1+\theta_{\alpha,n}}.
			\end{equation*}
			
			Similarly, we can decompose \(I_{12}\) as follows:
			\begin{equation*}
				I_{12}=-\left(|D|^{\frac{\alpha}{2}}(\dot{u} \partial_{x}^{\beta}\bar{u}), |D|^{\frac{\alpha}{2}}\partial_{x}^{\beta}(|u|^{2\sigma}) \right)+\big((|D|^{\frac{\alpha}{2}}\dot{u})\cdot \partial_{x}^{\beta}\bar{u}, |D|^{\frac{\alpha}{2}}\partial_{x}^{\beta}(|u|^{2\sigma})\big)
			\end{equation*}
			\begin{equation*}
				+\big(|D|^{\frac{\alpha}{2}}(\dot{u} \partial_{x}^{\beta}\bar{u})-\dot{u}\partial_{x}^{\beta}|D|^{\frac{\alpha}{2}}\bar{u}-(|D|^{\frac{\alpha}{2}}\dot{u})\cdot \partial_{x}^{\beta}\bar{u}, |D|^{\frac{\alpha}{2}}\partial_{x}^{\beta}(|u|^{2\sigma})\big):=I_{16}+I_{17}+I_{18}.
			\end{equation*}
			Then \(I_{17}\) and \(I_{18}\) can also be bounded by \(\|u\|_{H^{\alpha+n}(\T^d)}^{2-\theta_{\alpha,n}}\|u\|_{H^{\frac{\alpha}{2}}(\T^d)}^{1+\theta_{\alpha,n}}\).
			
			For the term \(I_{10}\), we apply the classical Leibniz rule to \(\partial_{x}^{\beta}(\dot{u}\bar{u})\) and derive
			\begin{equation*}
				I_{10}=\left(|D|^{\frac{\alpha}{2}}(\bar{u} \partial_{x}^{\beta}\dot{u}), |D|^{\frac{\alpha}{2}}\partial_{x}^{\beta}(|u|^{2\sigma}) \right)+\left(|D|^{\frac{\alpha}{2}}(\dot{u} \partial_{x}^{\beta}\bar{u}), |D|^{\frac{\alpha}{2}}\partial_{x}^{\beta}(|u|^{2\sigma}) \right)
			\end{equation*}
			\begin{equation*}
				+\sum_{0<|\rho|<|\beta|}\binom{\beta}{\rho}\left(|D|^{\frac{\alpha}{2}}\big[(\partial_{x}^{\rho}\bar{u})\cdot(\partial_{x}^{\beta-\rho}\dot{u})\big], |D|^{\frac{\alpha}{2}}\partial_{x}^{\beta}(|u|^{2\sigma})\right):=I_{19}+I_{20}+\sum_{0<|\rho|<|\beta|}I_{21}^{\rho}.
			\end{equation*}
			Note that \(I_{19}=-I_{13}\) and \(I_{20}=-I_{16}\). We can bound \(I_{21}^{\rho}\) by moving \(|D|^{\frac{\alpha}{2}}\) to the right:
			\begin{equation*}
				|I_{21}^{\rho}|\lesssim \big|((\partial_{x}^{\rho}\bar{u})\cdot(\partial_{x}^{\beta-\rho}\dot{u}), |D|^{\alpha}\partial_{x}^{\beta}(|u|^{2\sigma}))\big|\lesssim \| \partial_{x}^{\rho}\bar{u}\cdot\partial_{x}^{\beta-\rho}\dot{u}\|_{L^{2}(\T^d)}\|u\|_{H^{\alpha+n}(\T^d)}\|u\|_{L^{\infty}(\T^d)}^{2\sigma-1}
			\end{equation*}
			\begin{equation*}
				\lesssim\|\partial_{x}^{\rho}\bar{u}\|_{L^{\frac{2d}{d-2s_{1}}}(\T^d)}\|\partial_{x}^{\beta-\rho}\dot{u}\|_{L^{\frac{2d}{d-2s_{2}}}(\T^d)}\|u\|_{H^{\alpha+n}(\T^d)}
			\end{equation*}
			\begin{equation*}
				\lesssim \|u\|_{H^{k+s_{1}}(\T^d)}\|u\|_{H^{n+\alpha-k+s_{2}}(\T^d)}\|u\|_{H^{\alpha+n}(\T^d)}\lesssim \|u\|_{H^{\alpha+n}(\T^d)}^{2-\theta_{\alpha, n}}\|u\|_{H^{\frac{\alpha}{2}}(\T^d)}^{1+\theta_{\alpha, n}},
			\end{equation*}
			where \(k:=|\rho|\), \(s_1:=\frac{d(\alpha+n-k)}{2(\alpha+n)}\), and \(s_{2}:=\frac{dk}{2(\alpha+n)}\).
			
			It now remains to estimate \(I_5:=\left(|D|^{\frac{\alpha}{2}}\partial_{x}^{\beta}(|u|^{2}), |D|^{\frac{\alpha}{2}}\partial_{x}^{\beta}(|u|^{2\sigma})^{\cdot} \right)\). Substituting \((|u|^{2\sigma})^{\cdot}=\sigma |u|^{2(\sigma-1)}(|u|^{2})^{\cdot}\) into \(I_5\) yields 
			\begin{equation*}
				\frac{2}{\sigma}\cdot I_{5}=2\left(|D|^{\frac{\alpha}{2}}\partial_{x}^{\beta}(|u|^{2}), |D|^{\frac{\alpha}{2}}\big[\partial_{x}^{\beta}(|u|^{2(\sigma-1)}(|u|^2)^{\cdot})\big] \right)
			\end{equation*}
			\begin{equation*}
				=2\left(|D|^{\frac{\alpha}{2}}\partial_{x}^{\beta}(|u|^{2}), |u|^{2(\sigma-1)}|D|^{\frac{\alpha}{2}}\partial_{x}^{\beta}(|u|^2)^{\cdot} \right)+2\left(|D|^{\frac{\alpha}{2}}\partial_{x}^{\beta}(|u|^{2}), (|u|^2)^{\cdot}|D|^{\frac{\alpha}{2}}\partial_{x}^{\beta}(|u|^{2(\sigma-1)}) \right)
			\end{equation*}
			\begin{equation*}
				+2\Big(|D|^{\frac{\alpha}{2}}\partial_{x}^{\beta}(|u|^{2}),|D|^{\frac{\alpha}{2}}\big[\partial_{x}^{\beta}(|u|^{2(\sigma-1)}(|u|^2)^{\cdot})\big]- |u|^{2(\sigma-1)}|D|^{\frac{\alpha}{2}}\partial_{x}^{\beta}(|u|^2)^{\cdot}-(|u|^2)^{\cdot}|D|^{\frac{\alpha}{2}}\partial_{x}^{\beta}(|u|^{2(\sigma-1)}) \Big)
			\end{equation*}
			\begin{equation*}
				:=I_{22}+I_{23}+I_{24}.
			\end{equation*}
			
			The term \(I_{24}\) can be estimated similarly to \(I_8\), yielding the bound \(\|u\|_{H^{\alpha+n}(\T^d)}^{\,2-\theta_{\alpha,n}}
			\|u\|_{H^{\frac{\alpha}{2}}(\T^d)}^{\,1+\theta_{\alpha,n}}\). The same estimate for \(I_{23}\) follows from H\"{o}lder's inequality and interpolation.
			
			For \(I_{22}\), we can rewrite it as follows:
			\begin{equation*}
				I_{22}=\frac{d}{dt}\left(\big||D|^{\frac{\alpha}{2}}\partial_{x}^{\beta}(|u|^{2})\big|^2,|u|^{2(\sigma-1)}\right)-\left(\big||D|^{\frac{\alpha}{2}}\partial_{x}^{\beta}(|u|^{2})\big|^2, (|u|^{2(\sigma-1)})^{\cdot}\right):=I_{25}+I_{26}.
			\end{equation*}
			
			Note that \(\frac{\sigma}{2}\cdot I_{25}=-\frac{d}{dt}\widetilde{J}_{3}^{\beta}(u)\), and we have the following estimate for \(I_{26}\):
			\begin{equation*}
				|I_{26}|\lesssim \||D|^{\frac{\alpha}{2}}\partial_{x}^{\beta}(|u|^{2})\|_{L^{4}(\T^d)}^{2}\|u\|_{H^{\alpha}(\T^d)}\|u\|_{L^{\infty}(\T^d)}^{2\sigma-3}\lesssim \|u\|_{H^{\alpha+n}(\T^d)}^{\,2-\theta_{\alpha,n}}
				\|u\|_{H^{\frac{\alpha}{2}}(\T^d)}^{\,1+\theta_{\alpha,n}}.
			\end{equation*}
			Thus, we have proven polynomial growth for equation (\ref{fNLS}) with the general nonlinear term \(|u|^{2\sigma}u\). Specifically, for \(\alpha>d\),
			\begin{equation*}
				\|u(t)\|_{H^{\alpha+n}(\T^d)}\lesssim_{u_0}(1+|t|)^{\frac{2n+\alpha}{\alpha-d}}.
			\end{equation*}
			The case \(\frac{d}{2}<\alpha\le d\) can also be handled using estimate (\ref{new year}) and the Gagliardo-Nirenberg inequalities in Lemma \ref{Gag}, as before.
		\end{rem}
		\begin{rem}
			Throughout this paper we consider the defocusing case. However, most of our results also apply to the following focusing equation
			\begin{equation*}
				\begin{cases}
					i\partial_t u(t,x) = |D|^{\alpha} u - |u|^2 u, \\[4pt]
					u(0,x) = u_0(x), \qquad (t,x)\in \R \times \T^d .
				\end{cases}
			\end{equation*}
			For example, the local well-posedness result in Theorem~\ref{local well-posedness} remains valid in this setting. 
			
			When $\alpha>d$, our argument for polynomial growth only requires a global bound of the $H^{\frac{\alpha}{2}}$-norm, which yields
			\[
			\|u(t)\|_{H^{\alpha+n}(\T^d)}
			\lesssim_{u_0}
			(1+|t|)^{\frac{2\alpha+n}{\alpha-d}} .
			\]
			
			In the focusing case, the conserved energy is
			\[
			E(u(t))
			:= \frac12 \||D|^{\frac{\alpha}{2}} u(t)\|_{L^2(\T^d)}^2
			- \frac14 \|u(t)\|_{L^4(\T^d)}^4
			\equiv
			\frac12 \||D|^{\frac{\alpha}{2}} u_0\|_{L^2(\T^d)}^2
			- \frac14 \|u_0\|_{L^4(\T^d)}^4 ,
			\qquad t\in\R .
			\]
			By the Gagliardo-Nirenberg inequality in Lemma~\ref{Gag},
			\[
			\|u\|_{L^4(\T^d)}^4
			\lesssim
			\|u\|_{H^{\frac{d}{4}}(\T^d)}^4
			\lesssim
			\|u\|_{H^{\frac{\alpha}{2}}(\T^d)}^{\frac{2d}{\alpha}}
			\|u\|_{L^2(\T^d)}^{4-\frac{2d}{\alpha}} .
			\]
			Since $\alpha>d$, we have $\frac{2d}{\alpha}<2$. Consequently, the quartic term is of strictly lower order than the quadratic term in the energy, which ensures a global bound on $\|u(t)\|_{H^{\frac{\alpha}{2}}(\T^d)}$. Therefore, the first part of Theorem~\ref{d} continues to hold in the focusing case.
		\end{rem}

		\newpage
		\appendix
		\section{}
		In Appendix A, we will introduce some basic concepts of uniform estimates for oscillatory integrals. For more on this topic, we refer the readers to \cite{11,12}.
		
		Following the notations in \cite{2} or \cite{3}, we first introduce some concepts. Recall that we define the oscillatory integral \(J_{h,\zeta}(\lambda)\) as
		\begin{equation*}
			J_{h,\zeta}(\lambda):=\int_{\R^{d}}e^{i\lambda h(\xi)}\zeta(\xi)\,d\xi.
		\end{equation*}
		\begin{defi}\label{A1}
			For \(r,s>0\), the space \(\mathcal{H}_{r}(s)\) is defined as follows:
			\begin{equation*}
				\mathcal{H}_{r}(s):=\left\lbrace P\;\Big|\; P\in \mathcal{O}(B_{\mathbb{C}^{d}}(0,r))\cap C(\overline{B}_{\mathbb{C}^{d}}(0,r)),\; |P(w)|<s,\; \forall w\in \overline{B}_{\mathbb{C}^{d}}(0,r) \right\rbrace.
			\end{equation*}
		\end{defi}
		\begin{defi}\label{A2}
			Suppose that \(h:\mathbb{R}^{d}\to \mathbb{R}\) is real analytic at \(0\). We write 
			\begin{equation*}
				M(h)\curlyeqprec (\beta,p), \; \beta\le 0,\; p\in \mathbb{N},
			\end{equation*}
			if for sufficiently small \(r>0\), there exist \(s>0\), \(C>0\), and a neighborhood \(\Omega\subseteq B_{\mathbb{R}^{d}}(0,r)\) of the origin such that
			\begin{equation*}
				\left|J_{h+P,\zeta}(\lambda)\right|\le C(1+|\lambda|)^{\beta}\log^{p}(2+|\lambda|)\|\zeta\|_{C^{N}(\Omega)}, \quad \forall \lambda\in \mathbb{R},\; \zeta\in C_{c}^{\infty}(\Omega),\; P\in \mathcal{H}_{r}(s),
			\end{equation*}
			where \(N=N(h)\in \mathbb{N}\), with 
			\begin{equation*}
				\|\zeta\|_{C^{N}(\Omega)}:=\sup\left\lbrace|\partial^{\gamma}\zeta(\xi)|\;\Big|\;\xi\in\Omega,\; \gamma\in \mathbb{N}^{d},\; |\gamma|\le N\right\rbrace.
			\end{equation*}
			We have the following writing convention:
			\begin{itemize}
				\item We write \(M(h,\xi)\curlyeqprec(\beta,p)\) if 
				\begin{equation*}
					M(\tau_{\xi}h)\curlyeqprec(\beta,p), \quad \text{where } \tau_{\xi}h(y)=h(y+\xi),\; \forall y\in\mathbb{R}^{d};
				\end{equation*}
				\item We write \(M(h_{2})\curlyeqprec M(h_{1})+(\beta_{2},p_{2})\) if 
				\begin{equation*}
					M(h_{1})\curlyeqprec(\beta_{1},p_{1}) \;\; \text{implies} \;\; M(h_{2})\curlyeqprec(\beta_{1}+\beta_{2},p_{1}+p_{2}).
				\end{equation*}
			\end{itemize} 
			
		\end{defi}
		\vspace{10pt}
		Let \(\gamma=(\gamma_{1},\dots, \gamma_{d})\in \mathbb{R}^{d}\), with \(\gamma_{i}>0\) for all \(i=1,\dots,d\). For any \(c>0\), we define the dilation as follows:
		\begin{equation*}
			\delta_{c}^{\gamma}(\xi):=\left(c^{\gamma_{1}}\xi_{1},\dots,c^{\gamma_{d}}\xi_{d}\right), \quad \forall \xi\in \mathbb{R}^{d}.
		\end{equation*}
		\begin{defi}
			A polynomial \(f\) on \(\mathbb{R}^{d}\) is called \(\gamma\)-homogeneous of degree \(\rho\) if 
			\begin{equation*}
				f\circ\delta_{c}^{\gamma}(\xi)=c^{\rho}f(\xi), \quad \forall \xi\in\mathbb{R}^{d},\; c>0.
			\end{equation*}
		\end{defi}
		Let \(\mathcal{E}_{\gamma,d}\) be the set of all \(\gamma\)-homogeneous polynomials on \(\mathbb{R}^{d}\) of degree \(1\). Let \(H_{\gamma,d}\) be the set of all functions that are real-analytic at \(0\) and whose Taylor series have the form \(\sum_{\gamma\cdot\alpha>1}a_{\alpha}\xi^{\alpha}\); i.e., each monomial is \(\gamma\)-homogeneous of degree \(>1\).
		
		We now briefly introduce some useful lemmas, which can be found in \cite{2} or \cite{3}. 
		\begin{lemma}
			If \(h\) is real analytic at \(0\) and \(\nabla h(0)\ne 0\), then
			\begin{equation*}
				M(h)\curlyeqprec (-n,0), \quad \forall n\in\mathbb{N}.		  
			\end{equation*}
		\end{lemma}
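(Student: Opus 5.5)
The plan is to reduce the statement to the one-dimensional non-stationary phase estimate by integrating by parts along the direction of the gradient. Since $h$ is real analytic at $0$ with $\nabla h(0)\neq 0$, a rotation of coordinates lets us assume $\partial_{\xi_1}h(0)=:a\neq 0$. The key preliminary step is a uniform lower bound for the perturbed phase.

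Choose $r>0$ so small that $h$ extends holomorphically to a neighbourhood of $\overline{B}_{\mathbb{C}^d}(0,2r)$, with $|\partial_1 h(w)|\ge 3|a|/4$ there. For $P\in\mathcal{H}_r(s)$, Cauchy's estimates on the polydisc give $|\partial^{\gamma}P(\xi)|\lesssim_{\gamma,r} s$ for $\xi\in B_{\mathbb{R}^d}(0,r/2)$. Take $\Omega=B_{\mathbb{R}^d}(0,r/2)$ and $s$ small enough that $|\partial_1 P|\le |a|/4$ on $\Omega$. Then $|\partial_1(h+P)|\ge |a|/2$ on $\Omega$, and every derivative of $h+P$ of order at most $n+1$ is bounded on $\Omega$ uniformly in $P\in\mathcal{H}_r(s)$. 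This uniformity in $P$ is the only real content beyond the classical case.

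Next comes the integration by parts. Define the operator
\[
L\zeta:=\partial_1\!\Big(\frac{\zeta}{i\,\partial_1(h+P)}\Big),
\]
so that $e^{i\lambda(h+P)}=\frac{1}{i\lambda\,\partial_1(h+P)}\partial_1 e^{i\lambda(h+P)}$. Since $\zeta\in C_c^\infty(\Omega)$, integrating by parts $n$ times in $\xi_1$ with no boundary terms gives
\[
J_{h+P,\zeta}(\lambda)=(-\lambda)^{-n}\int e^{i\lambda(h+P)}L^n\zeta\,d\xi .
\]
Expanding $L^n\zeta$ gives a finite sum of terms. Each term is a product of derivatives of $\zeta$ of order at most $n$, derivatives of $h+P$ of order at most $n+1$, and powers of $(\partial_1(h+P))^{-1}$. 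By the preliminary step, each term is bounded by $C\|\zeta\|_{C^n(\Omega)}$ with $C$ independent of $P$. Integrating over the bounded set $\Omega$ then yields $|J_{h+P,\zeta}(\lambda)|\le C|\lambda|^{-n}\|\zeta\|_{C^n(\Omega)}$.

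Combining this with the trivial bound $|J|\le |\Omega|\,\|\zeta\|_{C^0}$ for $|\lambda|\le1$ gives the bound $(1+|\lambda|)^{-n}$, with $p=0$ and $N=n$. That is exactly $M(h)\curlyeqprec(-n,0)$. The requirement "for sufficiently small $r$" is met because every smaller $r$ works by the same argument.

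The main obstacle is the uniformity over the class $\mathcal{H}_r(s)$. I expect to handle it entirely through Cauchy's estimates on a slightly shrunken real ball, which is why $\Omega$ is taken strictly inside $B(0,r)$. Once that control is in place, the rest is routine.
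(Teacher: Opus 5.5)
The paper does not prove this lemma; it quotes it from \cite{2,3}, so there is no in-text argument to compare with. Your argument is correct under one hypothesis, discussed below. It is the standard proof of this fact: repeated integration by parts in a direction where the gradient does not vanish, with Cauchy's estimates on a shrunken real ball. Those estimates give, uniformly over $P\in\mathcal{H}_r(s)$, bounds on all derivatives of $P$ and the lower bound $|\partial_1(h+P)|\ge |a|/2$.

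Two minor remarks. First, no rotation is needed: some $\partial_j h(0)$ is already nonzero, so you can integrate by parts in $\xi_j$ directly. Second, your choice $N=n$ is forced, not just convenient. Testing $P=0$ and $\zeta=\chi e^{-i\lambda h}$ shows that no $N<n$ can work. So the ``$N=N(h)$'' in Definition~\ref{A2} must be read as allowed to depend on the target exponent.

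You use one hypothesis silently, and it should be stated. Your final bound relies on $|e^{i\lambda(h+P)}|=1$ on $\R^d$, that is, on $P$ being real-valued at real points. Definition~\ref{A1} as printed does not impose this, and without it the lemma is false. Indeed, $P\equiv is/2$ belongs to $\mathcal{H}_r(s)$. Take $\zeta=\chi e^{-i\lambda h}$ with $0\le\chi\in C_c^\infty(\Omega)$ and $\chi\not\equiv 0$. Then $J_{h+P,\zeta}(\lambda)=e^{-\lambda s/2}\int\chi$, while $\|\zeta\|_{C^N(\Omega)}\lesssim(1+|\lambda|)^N$. This is incompatible with any bound $C(1+|\lambda|)^{\beta}\log^{p}(2+|\lambda|)$ as $\lambda\to-\infty$.

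Your integration-by-parts identity itself survives complex $P$. Only the final modulus bound breaks: it acquires an extra factor of up to $e^{|\lambda| s}$. So state explicitly that $\mathcal{H}_r(s)$ consists of functions that are real on $B_{\R^d}(0,r)$, which is the intended convention. With that reading, your proof is complete.
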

		\begin{lemma}\label{E}
			If \(h\in \mathcal{E}_{\gamma,d}\) and \(P\in H_{\gamma,d}\), then 
			\begin{equation*}
				M(h+P)\curlyeqprec M(h).
			\end{equation*}
		\end{lemma}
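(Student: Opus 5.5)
The plan is a rescaling argument with the dilation $\delta^{\gamma}_c$, which exactly preserves $h$ and shrinks the higher-order part $P$. Assume $M(h)\curlyeqprec(\beta,p)$ with $\beta\le 0$. We must show that for every sufficiently small $r'>0$ there are $s'>0$, $C'>0$ and a neighborhood $\Omega'\subseteq B_{\R^d}(0,r')$ of the origin such that
\[
|J_{h+P+Q,\zeta}(\lambda)|\le C'(1+|\lambda|)^{\beta}\log^{p}(2+|\lambda|)\|\zeta\|_{C^{N}(\Omega')}
\]
for all $\zeta\in C_c^\infty(\Omega')$ and $Q\in\mathcal{H}_{r'}(s')$. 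Extend $\delta^\gamma_c$ to $\C^d$ coordinatewise. Since $h\in\mathcal{E}_{\gamma,d}$, we have $h\circ\delta^\gamma_c=c\,h$. Define the rescaled perturbations
\[
P_c:=c^{-1}P\circ\delta^\gamma_c,\qquad Q_c:=c^{-1}Q\circ\delta^\gamma_c .
\]

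\emph{Step 1: $P_c$ is small.} Fix a small radius $r$ for which the hypothesis on $h$ gives $s,C,\Omega$ and $N$. Every monomial $\xi^{\alpha}$ of $P$ satisfies $\gamma\cdot\alpha>1$, so each term of $P_c$ carries a factor $c^{\gamma\cdot\alpha-1}$. The Taylor series of $P$ converges on a fixed complex polydisc, and the $\gamma\cdot\alpha$ accumulate only at $+\infty$, so the exponents have a uniform gap $\gamma\cdot\alpha-1\ge\varepsilon>0$. Hence $\sup_{\overline B_{\C^d}(0,r)}|P_c|\lesssim c^{\varepsilon}\to0$ as $c\to0$. Choose $c\in(0,1)$ with $P_c\in\mathcal{H}_r(s/2)$ and with $\delta^\gamma_c(\overline B_{\C^d}(0,r))$ inside the domain of convergence of $P$.

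\emph{Step 2: choice of $s'$ and $r'$.} Set $s':=cs/2$. Take $r'$ with $\delta^\gamma_c(\overline B_{\C^d}(0,r))\subseteq \overline B_{\C^d}(0,r')$; for instance $r'=c^{\min_i\gamma_i}r$ works since $c<1$. Then every $Q\in\mathcal{H}_{r'}(s')$ gives $Q_c$ holomorphic on $B_{\C^d}(0,r)$ with $|Q_c|<s/2$. Therefore $P_c+Q_c\in\mathcal{H}_r(s)$.

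\emph{Step 3: change of variables.} Substitute $\xi=\delta^\gamma_c\eta$ and set $|\gamma|=\sum_i\gamma_i$:
\[
J_{h+P+Q,\zeta}(\lambda)=c^{|\gamma|}\int e^{ic\lambda\,(h+P_c+Q_c)(\eta)}\,\zeta(\delta^\gamma_c\eta)\,d\eta=c^{|\gamma|}J_{h+P_c+Q_c,\ \zeta\circ\delta^\gamma_c}(c\lambda).
\]
Take $\Omega':=\delta^\gamma_c(\Omega)$, so that $\zeta\circ\delta^\gamma_c\in C_c^\infty(\Omega)$. Since $c<1$, we have $\|\zeta\circ\delta^\gamma_c\|_{C^N(\Omega)}\le\|\zeta\|_{C^N(\Omega')}$. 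The hypothesis $M(h)\curlyeqprec(\beta,p)$ then bounds the right-hand side by $C(1+c|\lambda|)^\beta\log^p(2+c|\lambda|)\|\zeta\|_{C^N(\Omega')}$. Because $\beta\le0$, $(1+c|\lambda|)^\beta\le c^{\beta}(1+|\lambda|)^\beta$, and the logarithm is monotone in $|\lambda|$. Hence the bound holds with $C'=c^{|\gamma|+\beta}C$, which proves $M(h+P)\curlyeqprec(\beta,p)$.

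\emph{Main obstacle.} The expected difficulty is the order of quantifiers in Definition \ref{A2}, which asks for the estimate for every sufficiently small radius. The fix is to run the argument in reverse: given a small $r'$, set $r=c^{-\min_i\gamma_i}r'$. Then $r$ must be small enough for the hypothesis on $h$ while $c$ must be small enough for $P_c$ to be small, and these two requirements pull against each other.

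Two things make this consistent. First, the sup bound of Step 1 depends on $r$ only through a factor like $r^{\gamma\cdot\alpha}$ with $\gamma\cdot\alpha>1$, so it improves as $r$ shrinks. Second, $s$ may be taken nonincreasing in $r$. One therefore first fixes a small admissible $r_0$ and the corresponding $c$. For any smaller target $r'$ the argument runs with the dilated ball inside $B_{\C^d}(0,r_0)$. This bookkeeping, together with the uniform gap $\varepsilon>0$ in Step 1, is the step I would write out most carefully.
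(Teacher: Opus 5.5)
The paper does not prove Lemma~\ref{E}; it quotes it from \cite{2,3}. Your quasi-homogeneous dilation argument is the standard proof of this statement, and Steps 1--3 are correct:
\begin{itemize}
\item the gap $\min\{\gamma\cdot\alpha:\gamma\cdot\alpha>1\}>1$ holds because all $\gamma_i>0$;
\item the identity $J_{h+P+Q,\zeta}(\lambda)=c^{|\gamma|}J_{h+P_c+Q_c,\,\zeta\circ\delta^\gamma_c}(c\lambda)$ is right;
\item the bound $\|\zeta\circ\delta^\gamma_c\|_{C^N(\Omega)}\le\|\zeta\|_{C^N(\Omega')}$ holds for $c\le 1$;
\item the use of $\beta\le 0$ to get $(1+c|\lambda|)^\beta\le c^\beta(1+|\lambda|)^\beta$ is fine.
\end{itemize}

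What needs fixing is the quantifier bookkeeping. In Step 2 you choose $r'$ from $c$, but Definition~\ref{A2} requires the estimate for every sufficiently small $r'$. The repair in your last paragraph does not work as stated. It keeps $c$ fixed and lets $r$ shrink with $r'$. That requires $s(r)$ not to shrink as $r\downarrow 0$ (your claim that ``$s$ may be taken nonincreasing in $r$''), and this claim is false. Take $h=\xi_1^2+\xi_2^2$, for which $M(h)\curlyeqprec(-1,0)$ by Corollary~\ref{Q}. The perturbation $-w_1^2$ lies in $\mathcal{H}_r(s)$ as soon as $r^2<s$, and it turns the phase into $\xi_2^2$. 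For a bump $\zeta\ge 0$ with $\zeta(0)>0$, the resulting integral decays only like $|\lambda|^{-1/2}$. Hence every admissible $s(r)$ satisfies $s(r)\le r^2\to 0$.

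The tension you describe is in fact artificial. You never need $r=c^{-\min_i\gamma_i}r'$, only the inclusion $\delta^\gamma_c(\overline{B}_{\C^d}(0,r))\subseteq\overline{B}_{\C^d}(0,r')$. So proceed as follows:
\begin{itemize}
\item Fix one admissible radius $r$ for $h$, with its $s$, $C$, $\Omega$, once and for all.
\item Given any target $r'$, choose $c=c(r')\in(0,1)$ so small that $c^{\min_i\gamma_i}r\le r'$ and $\sup_{\overline{B}_{\C^d}(0,r)}|P_c|<s/2$. Both are ``$c$ small enough'' conditions, so they are compatible.
\item Set $s'=cs/2$, $C'=c^{|\gamma|+\beta}C$ and $\Omega'=\delta^\gamma_c(\Omega)\subseteq B_{\R^d}(0,r')$. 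Definition~\ref{A2} allows all three to depend on $r'$.
\end{itemize}
With this ordering Steps 1--3 go through verbatim, and the proof is complete. Drop the final paragraph, including the monotonicity claim about $s$.
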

		Although the above lemmas are important in the context of uniform estimates, in this paper we only require the following theorem from \cite{5} and its direct corollary. For this reason, we present them in detail.
		\begin{thm}\label{stationary}
			Let \( K \subseteq \mathbb{R}^n \) be a compact set, \( X \) an open neighborhood of \( K \), and \( k \) a positive integer. If \( u \in C_0^{2k}(K) \), \( f \in C^{3k+1}(X) \), \( \operatorname{Im} f \geq 0 \) in \( X \), \( \operatorname{Im} f(x_0) = 0 \), \( \nabla f(x_0) = 0 \), \(\det \operatorname{Hess} f(x_0)\ne 0\), and \( \nabla f \neq 0 \) in \( K \setminus \{x_0\} \), then
			\[
			\begin{aligned}
				\bigg|\int_{\R^d} e^{i\omega f(x)}u(x) \, dx 
				- e^{i\omega f(x_0)} \Bigl( \det \bigl( \omega \operatorname{Hess} f(x_0)/2\pi i \bigr) \Bigr)^{-\frac{1}{2}} 
				\sum_{j < k} \omega^{-j} L_j u\bigg| \\
				\leq C \omega^{-k} \sum_{|\alpha| \leq 2k} \sup |D^\alpha u|, \qquad \forall \omega > 0.
			\end{aligned}
			\]
			Here \( C \) is bounded when \( f \) stays in a bounded set in \( C^{3k+1}(X) \) and \( |x - x_0|/|f'(x)| \) has a uniform bound. With
			\[
			g_{x_0}(x) = f(x) - f(x_0) - \frac{1}{2}\langle f''(x_0)(x - x_0), x - x_0 \rangle,
			\]
			which vanishes to third order at \( x_0 \), we have
			\[
			L_j u = \sum_{v - \mu = j} \sum_{2v \geq 3\mu} i^{-j} 2^{-v} 
			\frac{\langle f''(x_0)^{-1} \nabla, \nabla \rangle^v (g_{x_0}^\mu u)(x_0)}{\mu! \, v!},
			\]
			which is a differential operator of order \( 2j \) acting on \( u \) at \( x_0 \). 
		\end{thm}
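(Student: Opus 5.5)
This is the classical stationary phase theorem with remainder, as in H\"ormander's Theorem 7.7.5. The plan is to reduce it to the model Gaussian (quadratic phase) case, treating the cubic and higher part of the phase perturbatively.

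\emph{Localization.} First split $u=u\chi+u(1-\chi)$, where $\chi$ is a cutoff equal to $1$ near $x_0$ and supported in a small ball around it. On the support of $u(1-\chi)$ we have $\nabla f\neq0$ and $\operatorname{Im} f\ge0$. There I integrate by parts with the operator
\[
L=\frac{\overline{f'}\cdot\nabla}{i\omega\big(|f'|^2+\operatorname{Im} f\big)},
\]
which fixes $e^{i\omega f}$ and gains $\omega^{-1}$ per application. After $k$ applications this gives a bound $C\omega^{-k}\sum_{|\alpha|\le k}\sup|D^\alpha u|$, with $C$ controlled by the $C^{k+1}$ norm of $f$ and a lower bound for $|f'|$ away from $x_0$. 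The hypothesis that $|x-x_0|/|f'(x)|$ is bounded is exactly what makes this constant uniform, and it also lets the cutoff be handled with weights $|x-x_0|^{-1}$.

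\emph{Quadratic model.} Near $x_0$, set $A=f''(x_0)$ and write $f(x)=f(x_0)+\tfrac12\langle A(x-x_0),x-x_0\rangle+g_{x_0}(x)$. For the pure quadratic phase, Parseval gives the exact identity
\[
\int e^{i\omega\langle Ay,y\rangle/2}v(y)\,dy=\Big(\det\frac{\omega A}{2\pi i}\Big)^{-1/2}\big(e^{-i\langle A^{-1}D,D\rangle/(2\omega)}v\big)(0).
\]
Taylor-expanding the Fourier multiplier $e^{-i\langle A^{-1}\xi,\xi\rangle/(2\omega)}$ to order $k$ produces the terms $\omega^{-v}\langle A^{-1}\nabla,\nabla\rangle^v v(0)/(v!\,(2i)^v)$. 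The remainder is bounded by $\omega^{-k}\|\widehat{D^{2k}v}\|_{L^1}$, and $\|\widehat{D^{2k}v}\|_{L^1}$ is in turn controlled by Sobolev norms of order $2k+n+1$. To get down to $2k$ derivatives in the stated remainder, I would follow H\"ormander and rework this estimate, which uses the compact support of $v$.

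\emph{Including $g_{x_0}$.} Apply the model with $v=e^{i\omega g_{x_0}}u\chi$ and expand $e^{i\omega g_{x_0}}=\sum_{\mu}(i\omega g_{x_0})^\mu/\mu!$. Since $g_{x_0}$ vanishes to third order at $x_0$, the derivative $\langle A^{-1}\nabla,\nabla\rangle^v(g_{x_0}^\mu u)(x_0)$ is zero unless $2v\ge3\mu$. The net power of $\omega$ is $\omega^{\mu-v}=\omega^{-j}$ with $j=v-\mu$, which is exactly the formula for $L_j$.

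\emph{Main obstacle.} The hard step is controlling the truncation error of the $\mu$-expansion uniformly, because $\omega g_{x_0}$ is not small. I would handle it with H\"ormander's device of a parameter-dependent phase $f_s=f(x_0)+\tfrac12\langle A y,y\rangle+s\,g_{x_0}$ for $s\in[0,1]$. Then I differentiate in $s$, or equivalently use a Taylor remainder in $s$, and in each remainder term apply the bound $|g_{x_0}(y)|\lesssim|y|^3$. Each factor $|y|^{2}$ is converted into an $\omega^{-1}$ gain by further integration by parts. The regularity hypothesis $f\in C^{3k+1}$ enters through the number of derivatives of $g_{x_0}$ that these integrations by parts consume.
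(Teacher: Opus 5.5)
Your architecture is the one H\"ormander uses for his Theorem 7.7.5, which is all the paper relies on, since it quotes the result without proof. That architecture is: cut away from $x_0$, treat the quadratic phase exactly by Parseval, interpolate through $f_s=f(x_0)+Q+s\,g_{x_0}$ with $Q(x)=\tfrac12\langle A(x-x_0),x-x_0\rangle$, Taylor-expand in $s$ to order $2k$, and use $g_{x_0}^{2k}=O(|x-x_0|^{6k})$ on the remainder. But as written, your steps consume more regularity than $u\in C_0^{2k}$, $f\in C^{3k+1}$ provide, and the step you defer is exactly where an idea is needed.

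\emph{First failure.} Making $\omega^{2k}\int g_{x_0}^{2k}u\,e^{i\omega f_s}\,dx=O(\omega^{-k})$ means trading $|x-x_0|^{6k}$ for $\omega^{-3k}$. That requires $3k$ integrations by parts, which put up to $3k$ derivatives on $u$.

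\emph{Second failure.} Your Parseval bound for the terms $\omega^{\mu}\int g_{x_0}^{\mu}u\,e^{i\omega Q}\,dx$, $\mu<2k$, needs Fourier-$L^1$ control of roughly $2(k+\mu)$ derivatives of $g_{x_0}^{\mu}u$. Neither hypothesis supplies this; for $\mu$ near $2k$ it is about $6k$ derivatives of $f$. Reworking the Fourier estimate using compact support repairs neither point.

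\emph{Missing ingredient.} You need the weighted non-stationary phase estimate (H\"ormander's Theorem 7.7.1): for $v\in C_0^{m}(K)$ and $f\in C^{m+1}(X)$ with $\operatorname{Im} f\ge 0$,
\[
\omega^{m}\Big|\int v\,e^{i\omega f}\,dx\Big|\le C\sum_{|\alpha|\le m}\sup|D^{\alpha}v|\,\big(|f'|^{2}+\operatorname{Im} f\big)^{|\alpha|/2-m}.
\]
Combined with $|f'(x)|\ge c|x-x_0|$, it shows that an amplitude with $|D^{\alpha}v|\lesssim|x-x_0|^{2m-|\alpha|}$ for $|\alpha|\le m$ contributes $O(\omega^{-m})$. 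This costs only $m$ integrations by parts, hence only $m+1$ derivatives of $f$. Use it three times.

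(i) Take $m=k$ on $u-\chi\sum_{|\alpha|<2k}\partial^{\alpha}u(x_0)(x-x_0)^{\alpha}/\alpha!$. This costs $\sum_{|\alpha|\le 2k}\sup|D^{\alpha}u|$ and leaves $L_ju$, $j<k$, unchanged, because only derivatives of $u$ of order $\le 2k-2$ at $x_0$ enter. So you may assume $u$ is a polynomial times a fixed cutoff. This step is also why the error is $O(\omega^{-k})$ rather than $O(\omega^{-k-n/2})$.

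(ii) Take $m=3k$ on the $s$-remainder. This is now legitimate because $g_{x_0}^{2k}u\in C^{3k}$, $f_s\in C^{3k+1}$, and $\operatorname{Im} f_s=(1-s)\operatorname{Im} Q+s\operatorname{Im} f\ge 0$.

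(iii) In $\omega^{\mu}\int g_{x_0}^{\mu}u\,e^{i\omega Q}$, split $g_{x_0}=T+R$ inside the amplitude, with $T$ its Taylor polynomial of degree $3k$. Every piece of $(T+R)^{\mu}u$ containing $R$ vanishes to order $\ge 3\mu+3k-2\ge 2(k+\mu)$. It is therefore $O(\omega^{-k-\mu})$ with $m=k+\mu\le 3k-1$. The remaining piece $T^{\mu}u$ is a polynomial times a cutoff. There your Parseval expansion applies and yields the same coefficients as $L_ju$.

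\emph{Two smaller slips.}
Your operator with denominator $i\omega(|f'|^{2}+\operatorname{Im} f)$ does not fix $e^{i\omega f}$ when $\operatorname{Im} f\neq 0$. Use $\overline{f'}\cdot\nabla/(i\omega|f'|^{2})$ together with $|e^{i\omega f}|\le 1$ instead.

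Expanding your own multiplier gives $\langle A^{-1}D,D\rangle^{\nu}/(\nu!\,(2i\omega)^{\nu})$ with $D=-i\nabla$, not $\langle A^{-1}\nabla,\nabla\rangle^{\nu}$. H\"ormander's $L_j$ has $D$; the $\nabla$ in the printed statement flips the sign of the odd-$\nu$ terms. For instance, with $n=1$ and $f=x^{2}/2$, the $\omega^{-1}$ coefficient after the factor $(\omega/2\pi i)^{-1/2}$ is $+\tfrac{i}{2}u''(0)$.
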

		
		\begin{coro}\label{Q}
			For \(Q(y)=\sum_{j=1}^{d}c_{j}y_{j}^{2}\) with \(c_{j}=\pm1\) for \(j=1,\dots,d\), we have
			\begin{equation*}
				M(Q)\curlyeqprec\Bigl(-\frac{d}{2},0\Bigr).
			\end{equation*}
		\end{coro}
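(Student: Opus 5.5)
The plan is to derive Corollary~\ref{Q} directly from Theorem~\ref{stationary}, applied to the perturbed phases $f=Q+P$ with $P\in\mathcal{H}_r(s)$. The substance of the proof is checking that every hypothesis of that theorem holds uniformly in $P$. Throughout I will treat $P$ as real-valued on $\R^d$, which is the case relevant to the phases in Section~2 (there $\lambda(u\xi-|\xi|^\alpha)$ is real). With this, $\operatorname{Im} f\equiv 0$, and the conditions $\operatorname{Im} f\ge 0$ and $\operatorname{Im} f(x_0)=0$ hold automatically.

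\textbf{Step 1: uniform $C^{m}$ bounds on $P$.} Fix $k:=\lceil d/2\rceil$ and set $N:=2k$. Since $P$ is holomorphic on $B_{\C^d}(0,r)$ with $|P|<s$, the Cauchy estimates give, on the real ball $X:=B_{\R^d}(0,r/2)$,
\[
\|\partial^{\beta}P\|_{L^\infty(X)}\le C_{\beta,r}\,s \quad\text{for every multi-index } \beta .
\]
Consequently $f=Q+P$ lies in a bounded subset of $C^{3k+1}(X)$ that does not depend on $P$. Moreover,
\[
\operatorname{Hess} f=2\operatorname{diag}(c_1,\dots,c_d)+O(s), \qquad \nabla f(y)=2(c_jy_j)_j+\nabla P(y).
\]

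\textbf{Step 2: a unique, nondegenerate critical point and the uniform ratio bound.} Take $s$ small relative to $r$. The map $y\mapsto -\tfrac12(c_j\partial_jP(y))_j$ is then a contraction of $B(0,r/4)$ into itself, since $|\nabla P|\lesssim s$ and $|\nabla^2P|\lesssim s$. Its fixed point is the unique zero $x_0=x_0(P)$ of $\nabla f$ in $B(0,r/4)$, and it satisfies $|x_0|\lesssim s$. The Hessian at $x_0$ is a small perturbation of $2\operatorname{diag}(c_j)$, so $|\det\operatorname{Hess}f(x_0)|\ge 2^{d-1}$. By the mean value theorem applied to $\nabla f$,
\[
|\nabla f(x)|=|\nabla f(x)-\nabla f(x_0)|\ge (2-Cs)\,|x-x_0| .
\]
This shows at once that $\nabla f\neq 0$ on $K\setminus\{x_0\}$ and that $|x-x_0|/|\nabla f(x)|\le 1$ uniformly in $P$.

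\textbf{Step 3: apply Theorem~\ref{stationary}.} Take $\Omega:=B(0,r/8)$ and $K:=\overline{\Omega}$. For $\zeta\in C_c^\infty(\Omega)$ and $\omega=\lambda\ge1$, the theorem bounds $J_{Q+P,\zeta}(\lambda)$ by the sum of two pieces.
\begin{enumerate}
\item The leading term is $|\det(\omega\operatorname{Hess}f(x_0)/2\pi i)|^{-1/2}\lesssim\omega^{-d/2}$, multiplied by $\sum_{j<k}\omega^{-j}|L_j\zeta|$. Each $L_j\zeta$ is a derivative of order at most $2j\le N$ of $g_{x_0}^{\mu}\zeta$ at $x_0$, and the coefficients come from the uniformly bounded quantities $f''(x_0)^{-1}$ and $g_{x_0}\in C^{3k+1}$. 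Hence this term is $\lesssim\omega^{-d/2}\|\zeta\|_{C^N(\Omega)}$.
\item The remainder is bounded by $C\omega^{-k}\|\zeta\|_{C^{N}}\le C\omega^{-d/2}\|\zeta\|_{C^N}$. The constant $C$ is uniform in $P$ by Steps~1 and~2.
\end{enumerate}
For $|\lambda|\le1$, the trivial bound $|J|\le|\Omega|\sup|\zeta|$ suffices. For $\lambda\le -1$, take complex conjugates: $\overline{J_{Q+P,\zeta}(\lambda)}=J_{-Q-P,\bar\zeta}(|\lambda|)$. Here $-Q$ has the same form with signs $-c_j$, and $-P\in\mathcal{H}_r(s)$, so the same argument applies. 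Together these give $M(Q)\curlyeqprec(-\tfrac d2,0)$.

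\textbf{Main obstacle.} I expect the delicate point to be the uniformity of the constant in Theorem~\ref{stationary} over the whole family $\{Q+P\}$. This is why Step~2 must produce a bound on $|x-x_0|/|\nabla f|$ that is independent of $P$, together with a uniform lower bound on $|\det\operatorname{Hess}f(x_0)|$. Both follow from the smallness of $s$ via the Cauchy estimates. If complex-valued $P$ had to be allowed, I would instead restrict to perturbations with $\operatorname{Im}P\ge0$, or appeal to the analytic-continuation version of the stationary phase lemma. I do not pursue this, since it is not needed in Section~2.
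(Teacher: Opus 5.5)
Your proposal is correct and follows the same route as the paper. A contraction argument gives the unique nondegenerate critical point of $Q+P$, and Theorem~\ref{stationary} with $k\ge d/2$ then gives the $\lambda^{-d/2}$ bound; you additionally supply the uniformity checks the paper leaves implicit (Cauchy estimates, the uniform bound on $|x-x_0|/|\nabla f|$).

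Two small remarks. Restricting to $P$ real on $\R^d$ is necessary, not merely convenient: for the constant $P=-is/2\in\mathcal{H}_r(s)$ the integral picks up a factor $e^{\lambda s/2}$. The paper's own appeal to Theorem~\ref{stationary}, which requires $\operatorname{Im} f\ge 0$, tacitly makes the same restriction. Also, for $\lambda<0$ the correct reduction is simply $J_{Q+P,\zeta}(\lambda)=J_{-Q-P,\zeta}(|\lambda|)$, with no conjugation.
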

		\begin{proof}
			By the contraction mapping principle or the implicit function theorem, for any sufficiently small \(r>0\), there exists \(s=s(r)>0\) such that for all \(P\in \mathcal H_r(s)\) and \(\xi\in B_{\mathbb C^n}(0,r/2)\), we can find a unique \(\xi_0 \in B_{\mathbb C^m}(0,r/2)\) satisfying
			\[
			\nabla Z(\xi_0)=0, \quad Z(\xi)=Q(\xi)+P(\xi).
			\]
			
			Then, taking \(k>d/2\) in Theorem \ref{stationary}, we immediately obtain the uniform bound.
		\end{proof}
		
		Finally, we recall the following Van der Corput Lemma (see \cite{10}).
		
		\begin{thm}\label{Van}
			For all \(k\ge 2\), let \(u\) be a \(C^{k}\) function such that \(u^{(k)}(t)\ge 1\). Then for any \(-\infty<a<b<\infty\), \(\lambda>0\), and any function \(\psi\) with an integrable derivative, the following holds:
			\begin{equation*}
				\Big|\int_{a}^{b}e^{i\lambda u(t)}\psi(t)\,dt\Big|\le 12k\lambda^{-1/k}\left[|\psi(b)|+\int_{a}^{b}|\psi'(t)|\,dt\right].
			\end{equation*}
			For \(k=1\), if \(u'(t)\) is monotone on \((a,b)\) and \(u'(t)\ge 1\), we also have 
			\begin{equation*}
				\Big|\int_{a}^{b}e^{i\lambda u(t)}\psi(t)\,dt\Big|\le 3\lambda^{-1}\left[|\psi(b)|+\int_{a}^{b}|\psi'(t)|\,dt\right].
			\end{equation*}
		\end{thm}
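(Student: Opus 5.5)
The statement is the one-dimensional Van der Corput lemma (Theorem \ref{Van}). I would first prove the estimate for $\psi\equiv1$, by induction on $k$, and then pass to general $\psi$ by integrating by parts.

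\textbf{The case $k=1$.} Here $u'$ is monotone with $u'\ge1$. I would write
\[
e^{i\lambda u}=\frac{1}{i\lambda u'}\frac{d}{dt}e^{i\lambda u}
\]
and integrate by parts. The boundary terms contribute at most $2\lambda^{-1}$. The remaining term is
\[
\lambda^{-1}\int_a^b\Big|\big(1/u'\big)'\Big|\,dt .
\]
Since $1/u'$ is monotone, this integral telescopes to $|1/u'(b)-1/u'(a)|\le1$. The total is therefore at most $3\lambda^{-1}$.

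\textbf{Induction step.} Assume the bound holds for $k-1$ with constant $c_{k-1}$, and suppose $u^{(k)}\ge1$. Then $u^{(k-1)}$ is strictly increasing, so it vanishes at most at one point $t_0$. Fix $\delta>0$ and remove the interval $(t_0-\delta,t_0+\delta)$; if there is no zero, remove instead a $\delta$-neighbourhood of the point where $|u^{(k-1)}|$ is minimal. On the removed piece I bound the integral trivially by $2\delta$. On the remaining pieces $|u^{(k-1)}|\ge\delta$. For $k-1\ge2$ the induction hypothesis, applied to the rescaled phase $u/\delta$, gives a bound $c_{k-1}(\lambda\delta)^{-1/(k-1)}$ on each piece. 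For $k-1=1$, $u'$ is monotone on each piece because $u''\ge1$, so the $k=1$ case applies there.

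The total is at most $2\delta+2c_{k-1}(\lambda\delta)^{-1/(k-1)}$. Optimizing with $\delta=\lambda^{-1/k}$ gives $c_k\lambda^{-1/k}$, where $c_k=2+2c_{k-1}$. This recursion yields $c_k\le 5\cdot2^{k-1}-2$, which is at most $12k$ for $k\le 4$ but exceeds it from $k=5$ on. So the stated constant $12k$ does not follow from this argument as written. To reach it, I would fall back on the refined bookkeeping in Stein's \emph{Harmonic Analysis} (Ch.~VIII), cited as \cite{10}, and follow that source for the exact constant.

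\textbf{General $\psi$.} Set $F(t):=\int_a^t e^{i\lambda u(s)}\,ds$. The case $\psi\equiv1$, applied on $[a,t]$, gives $|F(t)|\le C_k\lambda^{-1/k}$ uniformly in $t$, since the hypotheses on $u$ restrict to subintervals. Integrating by parts,
\[
\int_a^b e^{i\lambda u}\psi\,dt=F(b)\psi(b)-\int_a^b F\,\psi'\,dt .
\]
Bounding $|F|$ by $C_k\lambda^{-1/k}$ gives $C_k\lambda^{-1/k}\big[|\psi(b)|+\int_a^b|\psi'|\big]$. For $k=1$ the same computation with constant $3$ gives the second statement.

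The main obstacle is the constant in the induction step. The decomposition into $t_0$ and its neighbourhood is routine; what takes care is keeping track of constants, and the naive recursion must be sharpened as in Stein to obtain exactly $12k$.
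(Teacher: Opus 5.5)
Your argument is exactly Stein's proof ($c_1=3$, $c_k=2c_{k-1}+2$, so $c_k=5\cdot 2^{k-1}-2$). Each step is sound, including the $k=1$ case and the integration by parts for general $\psi$. The paper gives no proof of its own. It quotes the lemma from \cite{10}, which in this paper is Grafakos's \emph{Classical Fourier Analysis}, not Stein.

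The gap is the one you flag yourself, and your remedy does not close it. The proof in Stein's Chapter VIII is precisely this induction and yields precisely $5\cdot2^{k-1}-2$. No bookkeeping rescues the scheme: each step splits the interval into two pieces, each costing $c_{k-1}$, so the constants grow exponentially in $k$ even with $\delta$ optimized. As written, you have proved the inequality with constant $12k$ only for $k\le 4$.

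The missing idea is to pass from $u^{(k)}$ to $u'$ in one step, instead of one derivative at a time.

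\textbf{Monotone pieces.} By Rolle's theorem $u''$ has at most $k-2$ zeros, so $u'$ is monotone on each of at most $k-1$ subintervals. Hence $\{|u'|\ge\delta\}$ is a union of at most $2(k-1)$ intervals on which $u'$ is monotone with $|u'|\ge\delta$. On these, your $k=1$ bound (rescaled by $\delta$, and conjugated where $u'\le-\delta$) contributes at most $6(k-1)(\lambda\delta)^{-1}$.

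\textbf{Sublevel set.} The rest is bounded trivially by its measure, controlled by a sublevel-set estimate. Suppose $f\in C^m$, $f^{(m)}\ge 1$, and $S=\{|f|\le\varepsilon\}$ has measure $L>0$. Choose $x_0<\dots<x_m$ in $S$ with $|x_i-x_j|\ge |i-j|L/m$. By the mean value theorem for divided differences,
\[
\frac{1}{m!}\le f[x_0,\dots,x_m]=\sum_{i=0}^m\frac{f(x_i)}{\prod_{j\ne i}(x_i-x_j)}\le \varepsilon\Big(\frac{m}{L}\Big)^m\sum_{i=0}^m\frac{1}{i!\,(m-i)!}=\frac{\varepsilon\,(2m/L)^m}{m!},
\]
so $L\le 2m\,\varepsilon^{1/m}$.

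\textbf{Conclusion.} Take $f=u'$, $m=k-1$ and $\delta=\lambda^{-(k-1)/k}$. This gives $|\int_a^b e^{i\lambda u}\,dt|\le 2(k-1)\lambda^{-1/k}+6(k-1)\lambda^{-1/k}\le 12k\,\lambda^{-1/k}$. Your integration by parts then yields the statement with $\psi$.

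Nothing in the paper depends on the value $12k$, since the lemma is only quoted, but the statement as written requires it.
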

		\begin{rem}
			The Van der Corput Lemma directly ensures a uniform and optimal decay estimate in one dimension. However, in higher dimensions, no analogous result is available, thereby indicating the potential difficulties in establishing polynomial growth of Sobolev norms on general \(\T^{d}\).
		\end{rem}
		\newpage
		\section{}
		In Appendix B, we will introduce some useful fractional Leibniz rules for the operator \(|D|^{\alpha}:=(\sqrt{-\Delta})^{\frac{\alpha}{2}}\), which greatly extend the fundamental estimate established by Kenig, Ponce and Vega in \cite{7}:
		\begin{equation*}
			\||D|^s(fg)-f|D|^{s}g-g|D|^{s}f\|_{L^{p}(\R^d)}\lesssim_{s_1,s_2,p_1,p_2}\||D|^{s_1}f\|_{L^{p_1}(\R^d)}\||D|^{s_2}f\|_{L^{p_2}(\R^d)},
		\end{equation*}
		provided 
		\begin{equation}\label{<1}
			\begin{cases}
				0<s=s_1+s_2<1, \quad s_1,s_2\ge0,\\[4pt]
				\frac{1}{p}=\frac{1}{p_1}+\frac{1}{p_2},\quad 1<p, p_1, p_2<\infty.
			\end{cases}
		\end{equation}
		
		The following fractional Leibniz rule from \cite{8} overcomes the restriction ``\(s<1\)" in (\ref{<1}), enabling us to establish estimates for Sobolev norm growth for more general fractional orders \(\alpha\), especially when \(\alpha\ge 1\).
		
		\begin{thm}\label{aa}
			Let \( \ell \in \mathbb{N} \). Let \( p, p_1, p_2 \) satisfy \( 1 < p, p_1, p_2 < \infty \) and \( 1/p = 1/p_1 + 1/p_2 \). Let \( s, s_1, s_2 \) satisfy \( 0 \leq s_1, s_2 \leq \ell \) and \( s = s_1 + s_2 \). Then the following bilinear estimate
			\[
			\Bigl\| |D|^s(fg) - \sum_{k \in \mathbb{Z}} \sum_{m=0}^{\ell-1} A_s^m(0)(P_{\leq k-3}f, P_kg) 
			- \sum_{j \in \mathbb{Z}} \sum_{m=0}^{\ell-1} A_s^m(0)(P_{\leq j-3}g, P_jf) \Bigr\|_{L^p}
			\]
			\[
			\lesssim_{s_1,s_2,p_1,p_2} \||D|^{s_1} f\|_{L^{p_1}} \||D|^{s_2} g\|_{L^{p_2}}
			\]
			holds for all \( f, g \in \mathcal{S}(\R^d) \). 
			
			Here, for \(a_s(\xi,\eta,\theta):=|\eta+\theta\xi|^{s}\), we define 
			\begin{equation*}
				A_{s}^{m}(\theta)(f,g):=\int_{\R^d}\int_{\R^d}e^{ix(\xi+\eta)}\frac{1}{m!}\partial_{\theta}^{m}a_s(\xi,\eta,\theta)\widehat{f}(\xi)\widehat{g}(\eta)\,d\xi d\eta,
			\end{equation*}
			and \(P_{j}\), \(P_{\le j}\) are standard Littlewood-Paley projections onto the frequency regions \(\lbrace|\xi|\sim 2^{j}\rbrace\) and \(\lbrace|\xi|\lesssim 2^{j}\rbrace\), respectively.
		\end{thm}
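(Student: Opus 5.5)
The plan is a paraproduct argument in which the Taylor terms $A_s^m(0)$ are exactly the part of the low--high interactions that cannot be controlled by the right-hand side. Every remainder is then reduced to a Coifman--Meyer bilinear multiplier estimate, with dyadic summations that decay geometrically. First split
\[
fg=\sum_{k}P_{\le k-3}f\,P_kg+\sum_{j}P_{\le j-3}g\,P_jf+\sum_{|j-k|\le 2}P_jf\,P_kg=:\Pi_1+\Pi_2+\Pi_3 .
\]
In $\Pi_1$ the operator $|D|^s$ acts on $P_{\le k-3}f\,P_kg$ through the symbol $|\xi+\eta|^s=a_s(\xi,\eta,1)$. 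Taylor's formula in $\theta$ at $\theta=0$ gives
\[
a_s(\xi,\eta,1)=\sum_{m=0}^{\ell-1}\tfrac{1}{m!}\partial_\theta^m a_s(\xi,\eta,0)+\int_0^1\tfrac{(1-\theta)^{\ell-1}}{(\ell-1)!}\partial_\theta^\ell a_s(\xi,\eta,\theta)\,d\theta .
\]
The first $\ell$ terms are precisely what is subtracted in the statement. It therefore suffices to bound the remainder, the analogous remainder for $\Pi_2$ with the roles of $f$ and $g$ swapped, and the full term $|D|^s\Pi_3$.

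For the remainder of $\Pi_1$, I write $\partial_\theta^\ell a_s=\sum_{|\beta|=\ell}c_\beta\,\xi^\beta(\partial^\beta|\cdot|^s)(\eta+\theta\xi)$. On the support we have $|\xi|\le 2^{-2}|\eta|$, so $|\eta+\theta\xi|\sim|\eta|$ uniformly in $\theta\in[0,1]$. This lets me factor the symbol as
\[
|\xi|^{s_1}|\eta|^{s_2}\Bigl(\tfrac{|\xi|}{|\eta|}\Bigr)^{\ell-s_1}\sigma_\theta(\xi,\eta),
\]
where $\sigma_\theta$ is a symbol of order zero satisfying Coifman--Meyer bounds uniformly in $\theta$. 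The factor $(|\xi|/|\eta|)^{\ell-s_1}$ is not smooth at $\xi=0$ when $s_1<\ell$. To handle this I decompose the low frequency further, $P_{\le k-3}f=\sum_{j\le k-3}P_jf$. The piece with $j=k-r$ then carries a smooth symbol of size $2^{-r(\ell-s_1)}$, with Coifman--Meyer constants uniform in $k$. For each fixed $r$ the sum over $k$ is a standard paraproduct bounded in $L^p$ by $\||D|^{s_1}f\|_{L^{p_1}}\||D|^{s_2}g\|_{L^{p_2}}$. The geometric factor then sums in $r$. When $s_1=\ell$ no further decomposition is needed. The term $\Pi_2$ is identical with $f$ and $g$ interchanged, now using $s_2\le\ell$.

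For $|D|^s\Pi_3$ no Taylor subtraction is needed. The output frequency $2^m$ satisfies $2^m\lesssim2^k$, and the symbol $|\xi+\eta|^s|\xi|^{-s_1}|\eta|^{-s_2}$ restricted to output frequency $2^m$ has size $2^{(m-k)s}$. I decompose in $m=k-r$, apply Coifman--Meyer for each fixed $r$ with uniform constants (combined with the Littlewood--Paley square function to sum over $k$), and then sum geometrically in $r$, which is possible since $s>0$; the case $s=0$ is trivial. I expect this summation to be the main technical obstacle, in particular obtaining $k$-uniform Coifman--Meyer constants after the singular factors are localized and summing in $L^p$ without losing derivatives. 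I would first try to rescale each dyadic piece to unit frequency, so that uniformity follows from the scale invariance of the symbol classes.
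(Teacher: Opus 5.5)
The paper gives no proof of this statement; it imports it from Fujiwara--Georgiev--Ozawa \cite{8}. Your architecture is the natural one and most of it is right: the paraproduct split; the Taylor expansion of $\theta\mapsto a_s(\xi,\eta,\theta)$ at $\theta=0$, which produces exactly the subtracted terms because $A_s^m(0)$ has as symbol the $m$-th Taylor coefficient; the factor $(|\xi|/|\eta|)^{\ell-s_1}$ in the remainder; and the dyadic decompositions in the low frequency and in the output frequency.

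There is, however, a genuine gap at the step you flag yourself, and the tool you propose does not close it. The geometric sums need the fixed-$r$ operators to have norm $\lesssim 2^{-r(\ell-s_1)}$ (resp.\ $2^{-rs}$ for $\Pi_3$) with constants independent of $r$. The size of a bilinear symbol does not control the operator. Coifman--Meyer controls it through derivative seminorms, and these deteriorate with $r$. For the $r$-th piece of $\Pi_1$, $\sum_k m(\xi,\eta)\psi(2^{-(k-r)}\xi)\psi(2^{-k}\eta)$, every $\xi$-derivative costs $2^{-(k-r)}\sim 2^{r}(|\xi|+|\eta|)^{-1}$, so the seminorm of order $N$ is $\sim 2^{r(N-(\ell-s_1))}$. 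The same happens for $\Pi_3$: derivatives of $\psi(2^{-(k-r)}(\xi+\eta))$ cost $2^{r}$ each, against a gain of only $2^{-rs}$. Coifman--Meyer needs $N\ge 1$ (in fact $N$ of order $d$), while $\ell-s_1$ and $s$ may be arbitrarily small, so the resulting series in $r$ diverge. Rescaling each piece by the common factor $2^k$ (the scale invariance of the Coifman--Meyer classes) gives uniformity in $k$ only. That uniformity is automatic anyway, since the symbol is jointly homogeneous of degree $0$, and it does nothing for $r$.

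The missing idea is to use the scale separation directly instead of Coifman--Meyer.

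\emph{Terms $\Pi_1$, $\Pi_2$.} Write $F=|D|^{s_1}f$, $G=|D|^{s_2}g$, and let $T_k$ be the $k$-th term of the $r$-th piece, with fattened projections $\tilde P_j$. Rescale anisotropically, $\xi=2^{k-r}\xi'$, $\eta=2^{k}\eta'$. The rescaled symbol $m(2^{-r}\xi',\eta')\psi(\xi')\psi(\eta')$ is independent of $k$. It equals $2^{-r(\ell-s_1)}$ times a function whose $C^N$ norms are bounded uniformly in $r$ and $\theta$. Its kernel is therefore dominated by $2^{-r(\ell-s_1)}$ times a product of $L^1$-normalized bumps at scales $2^{-(k-r)}$ and $2^{-k}$. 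This gives the pointwise bound $|T_k|\lesssim 2^{-r(\ell-s_1)}M(\tilde P_{k-r}F)\,M(\tilde P_kG)$. Since $T_k$ has output frequency $\sim 2^k$, you can combine Littlewood--Paley, $\sup_j M(\tilde P_jF)\lesssim M(MF)$, H\"older and Fefferman--Stein to get a bound uniform in $r$.

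\emph{Term $\Pi_3$.} No bilinear multiplier theory is needed. With $\tilde P_k=\sum_{|j-k|\le 2}P_j$, the symbol of $P_{k-r}|D|^s(\tilde P_kf\,P_kg)$ factors into multipliers in $\xi+\eta$, $\xi$ and $\eta$ separately. Hence $|P_{k-r}|D|^s(\tilde P_kf\,P_kg)|\lesssim 2^{-rs}M\bigl(\check P_kF\cdot\check P_k'G\bigr)$ with modified Littlewood--Paley pieces $\check P_k,\check P_k'$, and the same square-function argument applies.

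\emph{Endpoint $s_1=\ell$.} Your $\sigma_\theta$ contains the factor $\xi^\beta|\xi|^{-\ell}$, whose $\xi$-derivatives are of size $|\xi|^{-1}$, so it is not a Coifman--Meyer symbol. Instead, let $\xi^\beta$ fall on $f$. Then $\int_0^1(1-\theta)^{\ell-1}(\partial^\beta|\cdot|^s)(\eta+\theta\xi)\,d\theta$ times the cutoffs is a genuine Coifman--Meyer symbol of order $s_2$, and you conclude with $\|\partial^\beta f\|_{L^{p_1}}\lesssim\||D|^{\ell}f\|_{L^{p_1}}$.
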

		
		\begin{coro}\label{fractional Leibniz 1}
			Let \( p, p_1, p_2 \) satisfy \( 1 < p, p_1, p_2 < \infty \) and \( 1/p = 1/p_1 + 1/p_2 \). 
			Let \( s, s_1, s_2 \) satisfy \( 0 \leq s_1, s_2 \leq 1 \), and \( s = s_1 + s_2 \). 
			Then the following bilinear estimate
			\[
			\||D|^s(fg) - f|D|^sg - g|D|^sf\|_{L^p} \lesssim_{s_1,s_2,p_1,p_2,d} \||D|^{s_1} f\|_{L^{p_1}} \||D|^{s_2} g\|_{L^{p_2}}
			\]
			holds for all \( f, g \in \mathcal{S}(\R^d) \).
		\end{coro}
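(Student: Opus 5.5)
We derive Corollary \ref{fractional Leibniz 1} directly from Theorem \ref{aa} with $\ell=1$. Since $0\le s_1,s_2\le 1=\ell$, the hypotheses hold, and the sums over $m$ reduce to the single term $m=0$. Because $\partial_\theta^0 a_s(\xi,\eta,0)=|\eta|^s$, we get
\[
A_s^0(0)(f,g)=\int\!\!\int e^{ix(\xi+\eta)}|\eta|^s\widehat f(\xi)\widehat g(\eta)\,d\xi d\eta = f\cdot |D|^s g .
\]
Theorem \ref{aa} therefore controls
\[
|D|^s(fg)-\sum_{k}(P_{\le k-3}f)(|D|^sP_kg)-\sum_j(P_{\le j-3}g)(|D|^sP_jf)
\]
in $L^p$ by $\||D|^{s_1}f\|_{L^{p_1}}\||D|^{s_2}g\|_{L^{p_2}}$.

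It remains to replace the paraproducts by the full products $f|D|^sg$ and $g|D|^sf$. For the first one, note that
\[
f|D|^sg-\sum_k(P_{\le k-3}f)(|D|^sP_kg)=\sum_k (P_{>k-3}f)(|D|^sP_kg).
\]
This is a high–high plus high–low paraproduct in which $f$ carries frequency $\gtrsim 2^{k}$. We split it as $\sum_k\sum_{j\ge k-2}(P_jf)(|D|^sP_kg)$ and move the derivatives:
\[
|D|^sP_kg=2^{ks_2}\cdot 2^{ks_1}\tilde P_k(|D|^{s_2}g)2^{-k s_2}\cdots
\]
More concretely, we write $|D|^sP_k g=2^{ks_1}\widetilde P_k|D|^{s_2}g$, where $\widetilde P_k$ is a Littlewood–Paley-type multiplier with symbol $(|\eta|/2^k)^{s_1}\psi(\eta/2^k)$, and $P_jf=2^{-js_1}\widetilde P_j|D|^{s_1}f$. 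The summand then becomes
\[
2^{(k-j)s_1}(\widetilde P_j|D|^{s_1}f)(\widetilde P_k|D|^{s_2}g),\qquad j\ge k-2 .
\]
When $s_1>0$, the geometric factor $2^{(k-j)s_1}$ lets us sum over $j-k=m\ge -2$. For each fixed $m$, the bound $\|\sum_k F_{k+m}G_k\|_{L^p}\le \|(\sum_k|F_{k+m}|^2)^{1/2}(\sum_k|G_k|^2)^{1/2}\|_{L^p}$ follows from Cauchy–Schwarz, and Hölder plus the Littlewood–Paley square function estimates, valid for $1<p_1,p_2<\infty$, bound it by $\||D|^{s_1}f\|_{L^{p_1}}\||D|^{s_2}g\|_{L^{p_2}}$. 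The second paraproduct is handled symmetrically, using the factor $2^{(k-j)s_2}$.

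The main obstacle is the endpoint case $s_1=0$ (or $s_2=0$), where no geometric decay is available. In that case we do not decompose $f$. We use $\sum_k(P_{>k-3}f)(|D|^sP_kg)=\sum_k (P_{>k-3}f)\,|D|^{s}P_k g$ with $s=s_2\le1$, and regroup by the high frequency of $f$ as $\sum_j P_jf\cdot |D|^sP_{\le j+2}g$. Since here $s=s_2$, the term $|D|^{s_2}P_{\le j+2} g$ is bounded by the maximal function of $|D|^{s_2}g$, and the resulting sum $\sum_j P_j f\cdot(\cdots)$ is an output whose frequencies need not be localized. We control it by duality with the square function of $f$. The remaining case $s=0$ is trivial, since then the left side equals $-fg$, and $\|fg\|_{L^p}$ is bounded directly by Hölder.

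Finally, Theorem \ref{aa} is stated on $\R^d$, but the applications are on $\T^d$. We would note that the same Littlewood–Paley and Coifman–Meyer arguments transfer to the torus via transference for periodic multipliers, with the zero mode handled trivially.
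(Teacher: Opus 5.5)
Your route is the one the paper intends. The paper gives no separate argument: it states this as the case $\ell=1$ of Theorem~\ref{aa} from \cite{8}. There $A_s^0(0)(f,g)=f\,|D|^s g$, and the differences $\sum_k (P_{>k-3}f)(|D|^sP_kg)$ and $\sum_j (P_{>j-3}g)(|D|^sP_jf)$ are absorbed by paraproduct bounds. For $s_1,s_2>0$ your treatment of these differences is correct. You write the summand as $2^{-ms_1}(\widetilde P_{k+m}|D|^{s_1}f)(\widetilde P_k|D|^{s_2}g)$ and apply pointwise Cauchy--Schwarz in $k$, H\"older, and square-function bounds. This gives an estimate uniform in $m\ge -2$, and the factor $2^{-ms_1}$ sums.

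The step that does not close as written is the endpoint $s_1=0$ (and symmetrically $s_2=0$). Put $G:=|D|^sg$ and let $MG$ be its Hardy--Littlewood maximal function. Once you bound $|D|^{s}P_{\le j+2}g$ by $MG$, the frequency information is lost. A sum $\sum_j P_jf\cdot H_j$ with merely $|H_j|\le MG$ is not controlled by $\|(\sum_j|P_jf|^2)^{1/2}\|_{L^{p_1}}\|MG\|_{L^{p_2}}$. Indeed, taking $H_j$ with the phase of $\overline{P_jf}$ produces $\sum_j|P_jf|\,MG$, an $\ell^1$ rather than $\ell^2$ sum. Dualizing against the square function of $f$ only moves the problem to a square function of $\widetilde P_j(\overline{H_j}h)$, which again needs the frequency structure of $H_j$.

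The standard fix uses only tools you already invoke. Split $\sum_k (P_{\ge k-2}f)(P_kG)$ into two parts:
\begin{itemize}
\item the five comparable pieces $\sum_k P_{k+m}f\,P_kG$, $-2\le m\le 2$, which your Cauchy--Schwarz argument handles with no decay needed;
\item the tail $\sum_j P_jf\,P_{\le j-3}G$.
\end{itemize}
Each summand of the tail has Fourier support in $\{2^{j-2}\le|\xi|\le 2^{j+2}\}$. So for $1<p<\infty$ the Littlewood--Paley inequality for such sums yields
\[
\Bigl\|\sum_j P_jf\,P_{\le j-3}G\Bigr\|_{L^p}\lesssim\Bigl\|\Bigl(\sum_j|P_jf|^2\Bigr)^{1/2}MG\Bigr\|_{L^p}\lesssim\|f\|_{L^{p_1}}\,\||D|^sg\|_{L^{p_2}}.
\]
With this the proof is complete. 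Your closing remark on transference is not needed, since the statement is posed on $\R^d$; the passage to $\T^d$ is a separate remark in the paper.
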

		
		\begin{coro}\label{aaa}
			Let \( p, p_1, p_2 \) satisfy \( 1 < p, p_1, p_2 < \infty \) and \( 1/p = 1/p_1 + 1/p_2 \). 
			Let \( s, s_1, s_2 \) satisfy \( 0 \leq s_1, s_2 \leq 2 \) and \( s = s_1 + s_2 \geq 2 \). 
			Then the following bilinear estimate
			\[
			\||D|^s(fg) - f|D|^sg - g|D|^sf + s|D|^{s-2}(\nabla f \cdot \nabla g)\|_{L^p} 
			\lesssim_{s_1,s_2,p_1,p_2,d} \||D|^{s_1} f\|_{L^{p_1}} \||D|^{s_2} g\|_{L^{p_2}}
			\]
			holds for all \( f, g \in \mathcal{S}(\R^d) \).
		\end{coro}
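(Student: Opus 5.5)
The plan is to obtain Corollary \ref{aaa} from Theorem \ref{aa} with $\ell=2$. The hypotheses $0\le s_1,s_2\le 2$, $1<p,p_1,p_2<\infty$, $1/p=1/p_1+1/p_2$ are exactly those of Theorem \ref{aa} for $\ell=2$. It therefore suffices to show that the two paraproduct sums in Theorem \ref{aa} differ from $f|D|^sg+g|D|^sf-s|D|^{s-2}(\nabla f\cdot\nabla g)$ by an error bounded by $\||D|^{s_1}f\|_{L^{p_1}}\||D|^{s_2}g\|_{L^{p_2}}$.

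\textbf{Step 1: identify the symbols.} For $m=0$ we have $a_s(\xi,\eta,0)=|\eta|^s$, so
\[
\sum_k A_s^0(0)(P_{\le k-3}f,P_kg)=\sum_k P_{\le k-3}f\cdot |D|^sP_kg .
\]
For $m=1$ we have $\partial_\theta a_s(\xi,\eta,0)=s|\eta|^{s-2}\eta\cdot\xi$. Since $\widehat{\nabla f}=i\xi\widehat f$, the product $\xi\cdot\eta\,\widehat f(\xi)\widehat g(\eta)$ corresponds to $-\nabla f\cdot\nabla g$. Hence
\[
\sum_k A_s^1(0)(P_{\le k-3}f,P_kg)=-s\sum_k \nabla P_{\le k-3}f\cdot |D|^{s-2}\nabla P_kg ,
\]
and the same holds with $f,g$ exchanged. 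The first-order Taylor terms are thus the low–high paraproduct pieces of $-s|D|^{s-2}(\nabla f\cdot\nabla g)$, and the zeroth-order terms are the low–high pieces of $f|D|^sg+g|D|^sf$.

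\textbf{Step 2: estimate the three error families.}
\begin{enumerate}
\item[(a)] $f|D|^sg-\sum_kP_{\le k-3}f\,|D|^sP_kg=\sum_k P_{>k-3}f\,|D|^sP_kg$. In each term the frequency $2^j$ of $f$ is $\gtrsim 2^k$. Since $s_1\ge0$ and $2^{ks}\lesssim 2^{js_1}2^{ks_2}$, we can move $s_1$ derivatives onto $f$. The resulting sum is a Coifman--Meyer (or square-function) expression, bounded in $L^p$ by $\||D|^{s_1}f\|_{L^{p_1}}\||D|^{s_2}g\|_{L^{p_2}}$. The symmetric term is handled the same way.
\item[(b)] The high–high part of $|D|^{s-2}(\nabla f\cdot\nabla g)$, where both frequencies are $\sim 2^k$. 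Here the output frequency is $\lesssim 2^k$, and since $s\ge2$ the operator $|D|^{s-2}$ has nonnegative order. This is exactly where the hypothesis $s\ge2$ is used.
\item[(c)] The commutator
\[
|D|^{s-2}\bigl(\nabla P_{\le k-3}f\cdot\nabla P_kg\bigr)-\nabla P_{\le k-3}f\cdot|D|^{s-2}\nabla P_kg .
\]
Its bilinear symbol is $\bigl(|\xi+\eta|^{s-2}-|\eta|^{s-2}\bigr)(\xi\cdot\eta)$ with $|\xi|\ll|\eta|$. By the mean value theorem this is $O(|\xi|^2|\eta|^{s-2})$ and smooth of Coifman--Meyer type. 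It is therefore controlled by $\||D|^{2}P_{\rm low}f\|$ times $\||D|^{s-2}P_kg\|$. Redistributing derivatives, which needs $s_1\le2$ and $s_2\ge s-2$, i.e.\ exactly $s_1\le 2$, gives the required bound.
\end{enumerate}

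\textbf{Step 3: sum the pieces.} Summing (a)--(c) together with the conclusion of Theorem \ref{aa} yields the estimate, with constants depending on $s_1,s_2,p_1,p_2,d$.

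\textbf{Main obstacle.} I expect the hardest part to be justifying the Coifman--Meyer bounds for the non-smooth symbols $|\eta|^{s}$ and $|\xi+\eta|^{s-2}$ near the endpoints $s_1\in\{0,2\}$, where no frequency gain is available. In (a) and (b) the summation over frequency scales must then be done through Littlewood--Paley square functions and vector-valued (Fefferman--Stein) maximal inequalities rather than by geometric decay. My first attempt would be to expand the symbol restricted to each dyadic annulus in a Fourier series, giving a rapidly convergent sum of products of multipliers, and then apply the square-function estimates term by term. Since the paper applies these rules on $\T^d$, I would finally transfer them by the standard periodic transference for multipliers.
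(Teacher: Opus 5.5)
Your proposal is correct and follows the intended route. The paper gives no argument of its own: it quotes the estimate from \cite{8} as the $\ell=2$ consequence of Theorem~\ref{aa}, and your reduction is exactly what converts the paraproduct sums into full products (the high--low remainders of the $m=0$ terms, the high--high part of $|D|^{s-2}(\nabla f\cdot\nabla g)$ where $s\ge 2$ is used, and the commutator where $s_1,s_2\le 2$ is used). One point to tighten in (c): the commutator symbol divided by $|\xi|^2|\eta|^{s-2}$ is not smooth at $\xi=0$, so write $|\xi+\eta|^{s-2}-|\eta|^{s-2}=\xi\cdot\int_0^1\nabla(|\cdot|^{s-2})(\eta+t\xi)\,dt$ and put $\xi_i\xi_j$ on $f$ as $-\partial_i\partial_j f$ (controlled by $\||D|^{2}f\|_{L^{p_1}}$ via Riesz transforms); what remains is $|\eta|^{s-2}$ times an honest Coifman--Meyer symbol.
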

		
		In Theorem \ref{aa} above, the fractional Leibniz rule is characterized by the bilinear operator \(A_{s}^{m}(\theta)\) and only deals with non-endpoint cases. The following fractional Leibniz rule includes various endpoint cases and represents fractional derivatives in a more elegant way; see \cite{9}.
		
		\begin{thm}\label{fractional Leibniz 2}
			In the following statement, we adopt the usual multi-index notation: \( \alpha = (\alpha_1, \dots, \alpha_d) \), 
			\( \partial^\alpha = \partial^\alpha_x = \partial^{\alpha_1}_{x_1} \cdots \partial^{\alpha_d}_{x_d} \), 
			\( |\alpha| = \sum_{j=1}^d \alpha_j \), and \( \alpha! = \alpha_1! \cdots \alpha_d! \). 
			The operator \( |D|^{s,\alpha} \) is defined via the Fourier transform as
			\[
			\widehat{|D|^{s,\alpha} g}(\xi) = i^{-|\alpha|}\partial_\xi^{\alpha}(|\xi|^{s})\hat{g}(\xi).
			\]
			
			\textbf{Case 1:} \( 1 < p < \infty \).
			
			Let \( s > 0 \) and \( 1 < p < \infty \). Then for any \( s_1, s_2 \geq 0 \) with \( s_1 + s_2 = s \), and any \( f, g \in \mathcal{S}(\mathbb{R}^d) \), the following hold:
			
			\begin{enumerate}
				\item If \( 1 < p_1, p_2 < \infty \) with \( \frac{1}{p} = \frac{1}{p_1} + \frac{1}{p_2} \), then
				\[
				\Bigl\| |D|^s(fg) - \sum_{|\alpha| \leq s_1} \frac{1}{\alpha!} \partial^\alpha f |D|^{s,\alpha} g 
				- \sum_{|\beta| \leq s_2} \frac{1}{\beta!} \partial^\beta g |D|^{s,\beta} f \Bigr\|_{L^{p}}
				\]
				\[
				\lesssim_{s_1, s_2, p_1, p_2, d} \||D|^{s_1} f\|_{L^{p_1}} \||D|^{s_2} g\|_{L^{p_2}}.
				\]
				
				\item If \( p_1 = p, p_2 = \infty \), then
				\[
				\Bigl\| |D|^s(fg) - \sum_{|\alpha| < s_1} \frac{1}{\alpha!} \partial^\alpha f |D|^{s,\alpha} g 
				- \sum_{|\beta| \leq s_2} \frac{1}{\beta!} \partial^\beta g |D|^{s,\beta} f \Bigr\|_{L^{p}}
				\]
				\[
				\lesssim_{s_1, s_2, p, d} \||D|^{s_1} f\|_{L^{p}} \||D|^{s_2} g\|_{\mathrm{BMO}}.
				\]
				
				\item If \( p_1 = \infty, p_2 = p \), then
				\[
				\Bigl\| |D|^s(fg) - \sum_{|\alpha| \leq s_1} \frac{1}{\alpha!} \partial^\alpha f |D|^{s,\alpha} g 
				- \sum_{|\beta| < s_2} \frac{1}{\beta!} \partial^\beta g |D|^{s,\beta} f \Bigr\|_{L^{p}}
				\]
				\[
				\lesssim_{s_1, s_2, p, d} \||D|^{s_1} f\|_{\mathrm{BMO}} \||D|^{s_2} g\|_{L^{p}}.
				\]
			\end{enumerate}

			\textbf{Case 2:} \( \frac{1}{2} < p \leq 1 \).
			
			If \( \frac{1}{2} < p \leq 1 \), \( s > \frac{d}{p} - d \) or \( s \in 2\mathbb{N} \), 
			then for any \( 1 < p_1, p_2 < \infty \) with \( \frac{1}{p} = \frac{1}{p_1} + \frac{1}{p_2} \), 
			and any \( s_1, s_2 \geq 0 \) with \( s_1 + s_2 = s \),
			\[
			\Bigl\| |D|^s(fg) - \sum_{|\alpha| \leq s_1} \frac{1}{\alpha!} \partial^\alpha f |D|^{s,\alpha} g 
			- \sum_{|\beta| \leq s_2} \frac{1}{\beta!} \partial^\beta g |D|^{s,\beta} f \Bigr\|_{L^{p}}
			\]
			\[
			\lesssim_{s_1, s_2, p_1, p_2, d} \||D|^{s_1} f\|_{L^{p_1}} \||D|^{s_2} g\|_{L^{p_2}}.
			\]
		\end{thm}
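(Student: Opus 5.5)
The plan is to use a Littlewood--Paley paraproduct decomposition and Taylor-expand the symbol $|\xi+\eta|^s$ in the low-frequency variable. Write
\[
fg=\sum_k P_{\le k-3}f\,P_kg+\sum_j P_jf\,P_{\le j-3}g+\sum_{|j-k|\le 2}P_jf\,P_kg=:\Pi_1+\Pi_2+\Pi_3,
\]
and apply $|D|^s$ to each piece. In $\Pi_1$ the Fourier support satisfies $|\xi|\le 2^{k-3}\ll|\eta|\sim 2^k$, so $|\xi+\eta|^s$ is smooth there. Taylor expansion around $\eta$ up to order $N=\lfloor s_1\rfloor$ (or up to $|\alpha|<s_1$ in the endpoint case (2)) gives
\[
|\xi+\eta|^s=\sum_{|\alpha|\le N}\frac{\xi^\alpha}{\alpha!}\,\partial_\eta^\alpha|\eta|^s+R_N(\xi,\eta).
\]
The main terms reproduce $\sum_{|\alpha|\le s_1}\frac1{\alpha!}\,\partial^\alpha P_{\le k-3}f\,|D|^{s,\alpha}P_kg$. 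The remainder is a symbol of size $|\xi|^{N+1}|\eta|^{s-N-1}=|\xi|^{s_1}|\eta|^{s_2}(|\xi|/|\eta|)^{N+1-s_1}$, with matching derivative bounds. Localizing $\xi$ to a dyadic shell $2^{k-m}$ produces a factor $2^{-m(N+1-s_1)}$ times a Coifman--Meyer multiplier applied to $|D|^{s_1}f$ and $|D|^{s_2}g$. The Coifman--Meyer theorem, combined with the geometric sum in $m$, then bounds this part by $\||D|^{s_1}f\|_{L^{p_1}}\||D|^{s_2}g\|_{L^{p_2}}$. The piece $\Pi_2$ is treated symmetrically, producing the $\partial^\beta g\,|D|^{s,\beta}f$ terms.

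Next I would account for the correction terms over the full frequency range. The statement subtracts $\partial^\alpha f\,|D|^{s,\alpha}g$ with all of $f$, not just $P_{\le k-3}f$. So I must show that $\sum_k\partial^\alpha P_{>k-3}f\,|D|^{s,\alpha}P_kg$ obeys the same bound, and likewise for the $\beta$ terms. Here $|\alpha|\le s_1$, and $|D|^{s,\alpha}$ has order $s-|\alpha|\ge s_2$. Since $f$ sits at frequency $\gtrsim 2^k$, I can move $s_1-|\alpha|\ge0$ derivatives from $g$ to $f$. The resulting symbol is again $|\xi|^{s_1}|\eta|^{s_2}$ times a bounded, decaying factor in the relevant regime, and a square-function/Coifman--Meyer argument closes this step.

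For the high--high part $|D|^s\Pi_3$, the output frequency is $\lesssim 2^k$, and $|D|^s$ gains from this because $s>0$. Writing $|D|^sP_\ell(P_kf\,\tilde P_kg)$ with $\ell\le k+3$ gives a factor $2^{(\ell-k)s}\,2^{ks_1}\,2^{ks_2}$. Summing over $\ell$ via square functions and Hölder yields the bound for $1<p<\infty$.

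For Case 2 ($\tfrac12<p\le1$), the same decomposition works, but the output must be estimated in $L^p$ (via $H^p$). For the high--high piece I would use the $H^p$ atomic/molecular characterization. The condition $s>\frac dp-d$ guarantees enough vanishing moments of $|D|^sP_\ell$ so that the sum over $\ell$ converges. When $s\in2\mathbb N$, $|D|^s$ is a differential operator, and the exact Leibniz formula removes the high--high issue.

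The endpoint cases (2) and (3), with BMO, need the paraproduct with a BMO symbol to be bounded on $L^p$, which is the standard Carleson-measure estimate. This is also why the sum over $\alpha$ becomes $|\alpha|<s_1$ on the $L^p$ side: the Taylor remainder must carry a strictly positive decay exponent when $s_1$ is an integer.

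I expect the main obstacle to be the remainder estimate together with the correction terms at the endpoints. Controlling derivatives of $R_N$ uniformly enough to apply Coifman--Meyer, with geometric decay in $m$, is delicate when $s_1$ is an integer. So is summing the high--high piece for $p\le1$. I would first try to reduce everything to frequency-localized bilinear multipliers with symbol bounds $\lesssim 2^{-\epsilon m}$ and invoke multiplier theorems uniformly in $m$.
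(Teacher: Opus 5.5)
The paper gives no proof of this theorem; it quotes it from \cite{9}. Your architecture (paraproduct splitting, Taylor expansion of $|\xi+\eta|^s$ in the low-frequency variable, Coifman--Meyer and square-function bounds, a Carleson-type paraproduct at the BMO endpoints, and kernel decay of $|D|^s$ for $p\le 1$) is the standard one and in the spirit of \cite{9}. However, the bookkeeping at integer orders does not work as you describe it, and that bookkeeping is exactly what distinguishes this statement from classical Kato--Ponce.

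\textbf{Corrections with no decay.} In the correction step, moving $s_1-|\alpha|$ derivatives from $g$ to $f$ gains $2^{-(j-k)(s_1-|\alpha|)}$. This factor equals $1$ when $|\alpha|=s_1\in\N$, and likewise when $|\beta|=s_2\in\N$. So $\sum_k\partial^\alpha P_{>k-3}f\,|D|^{s,\alpha}P_kg$ carries no decaying factor, and summing its pieces over the dyadic gap $j-k$ with uniform bounds does not converge. You need to regroup it as $\partial^\alpha f\,|D|^{s,\alpha}g$ minus the paraproduct $\sum_k\partial^\alpha P_{\le k-3}f\,|D|^{s,\alpha}P_kg$. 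Bound the first term by H\"older together with Riesz/Mihlin bounds, which is where $1<p_1,p_2<\infty$ enters. Bound the second by Coifman--Meyer.

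\textbf{The strict sum in case (2).} Your explanation of $|\alpha|<s_1$ is backwards, and your plan of reducing everything to pieces with $2^{-\epsilon m}$ decay cannot work there. Truncating at $|\alpha|<s_1$ gives the $\Pi_1$ remainder the exponent $\lceil s_1\rceil-s_1$, which \emph{vanishes} for integer $s_1$; for $s_1=0$ the remainder is all of $|D|^s\Pi_1$.

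The strict inequality is forced by the correction term instead. For $|\alpha|=s_1$, the term $\partial^\alpha f\,|D|^{s,\alpha}g$ is an $L^p\cdot\mathrm{BMO}$ product whose high($f$)--low($g$) part is unbounded in $L^p$, so it must not be subtracted. What is left, the non-decaying $\Pi_1$ remainder, is a low($f$)--high($g$) paraproduct with a smooth Coifman--Meyer symbol acting on $(\partial^\alpha f,|D|^{s_2}g)$. Here the BMO function sits in the high-frequency slot, which is exactly the configuration the Carleson-measure estimate controls.

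Strictly positive decay is needed only where the BMO function sits at \emph{low} frequency:
\begin{itemize}
\item in the $\Pi_2$ remainder, which is why the $\beta$-sum is non-strict;
\item in the $\alpha$-corrections, where $|\alpha|<s_1$ supplies the decay.
\end{itemize}
The non-decaying $|\beta|=s_2$ correction splits into a low($f$)--high($g$) part, handled by the Carleson estimate, and a high--high part, handled by duality with the same estimate.

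\textbf{The condition for $p\le 1$.} A smaller point: the kernel of $|D|^sP_\ell$ already has all moments vanishing, so that is not where $s>\frac{d}{p}-d$ comes from. The condition is what lets the gain $2^{(\ell-k)s}$ beat the loss $2^{(k-\ell)d(\frac{1}{p}-1)}$ incurred when $P_\ell$ acts in $L^p$ on a function band-limited at scale $2^k$. Equivalently, $|D|^s$ of a band-limited function has $|x|^{-d-s}$ tails, and these lie in $L^p$ at infinity exactly when $s>\frac{d}{p}-d$.
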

		
		\begin{rem}
			Based on the classical Mihlin multiplier theorem (see \cite{10}), we can immediately derive a useful estimate: for any \(f\in \mathcal{S}(\R^d)\),
			\begin{equation*}
				\||D|^{s,\alpha} f\|_{L^{p}(\R^d)}\lesssim \||D|^{s-|\alpha|}f\|_{L^{p}(\R^d)}, \quad 1<p<\infty,
			\end{equation*}
			which is key to ensuring polynomial growth of Sobolev norms when the fractional order \(\alpha\) is large.
		\end{rem}
		
		\begin{rem}
			In this paper, we actually need fractional Leibniz rules on the torus \(\T^d\). These can be derived directly from the above inequalities on \(\R^d\) as follows: for any \(F\in L^{p_1}(\T^d)\) and \(G\in L^{p_2}(\T^d)\), one can take 
			\begin{equation*}
				\widehat{f_{\varepsilon}}(\xi):=\varepsilon^{d/p_1}\sum_{k\in\Z^d}a_{k}\varphi_{\varepsilon}(\xi-k),\quad 
				\widehat{g_{\varepsilon}}(\xi):=\varepsilon^{d/p_2}\sum_{k\in\Z^d}b_{k}\varphi_{\varepsilon}(\xi-k), \quad \xi\in\R^d,
			\end{equation*}
			where \(\lbrace \varphi_{\varepsilon}\rbrace_{\varepsilon>0}\) is an approximation of the identity, and
			\begin{equation*}
				F(x)=\sum_{k\in\Z^d}a_k e^{i k\cdot x},\quad 
				G(x)=\sum_{k\in\Z^d}b_k e^{i k\cdot x}, \quad x\in \T^d.
			\end{equation*}
			Substituting \(f_{\varepsilon}, g_{\varepsilon}\) into the above fractional Leibniz rules on \(\R^d\) and letting \(\varepsilon\to 0\), we obtain the corresponding estimates on the torus \(\T^d\).
		\end{rem}
		
		\newpage
		\section*{Acknowledgement}
		The author is grateful to Prof. Alex Cohen for helpful introductions and discussions.
		
		\section*{Conflict of interest statement}
		The author does not have any possible conflict of interest.
		
		\section*{Data availability statement}
		The manuscript has no associated data.
		\bigskip
		\bigskip

		\bibliographystyle{alpha}
		\bibliography{document2601}

\newcommand{\etalchar}[1]{$^{#1}$}
\begin{thebibliography}{BGZZK24}

\bibitem[BCH23]{2}
Cheng Bi, Jiawei Cheng, and Bobo Hua.
\newblock {The wave equation on lattices and oscillatory integrals}.
\newblock {\em arXiv preprint arXiv:2312.04130}, 2023.

\bibitem[BGT05]{15}
Nicolas Burq, Patrick G{\'e}rard, and Nikolay Tzvetkov.
\newblock {Bilinear eigenfunction estimates and the nonlinear Schr{\"o}dinger
  equation on surfaces}.
\newblock {\em Inventiones mathematicae}, 159(1):187--223, 2005.

\bibitem[BGZZK24]{12}
Saugata Basu, Shaoming Guo, Ruixiang Zhang, and Pavel Zorin-Kranich.
\newblock {A stationary set method for estimating oscillatory integrals}.
\newblock {\em Journal of the European Mathematical Society}, 2024.

\bibitem[BM18]{24}
Ha{\"\i}m Brezis and Petru Mironescu.
\newblock {Gagliardo--Nirenberg inequalities and non-inequalities: the full
  story}.
\newblock In {\em Annales de l'Institut Henri Poincar{\'e} C, Analyse non
  lin{\'e}aire}, volume~35, pages 1355--1376. Elsevier, 2018.

\bibitem[Bou93]{13}
J~Bourgain.
\newblock {Fourier Transform Restriction Phenomena for Certain Lattice Subsets
  and Applications to Nonlinear Evolution Equations, Part I: Schr{\"o}dinger
  Equations.}
\newblock {\em Geometric and functional analysis}, 3:107--156, 1993.

\bibitem[Bou96]{18}
Jean Bourgain.
\newblock {On the growth in time of higher Sobolev norms of smooth solutions of
  Hamiltonian PDE.}
\newblock {\em IMRN: International Mathematics Research Notices}, 1996(6),
  1996.

\bibitem[CCW99]{11}
Anthony Carbery, Michael Christ, and James Wright.
\newblock {Multidimensional van der Corput and sublevel set estimates}.
\newblock {\em Journal of the American Mathematical Society}, 12(4):981--1015,
  1999.

\bibitem[CHKL15]{30}
Yonggeun Cho, Gyeongha Hwang, Soonsik Kwon, and Sanghyuk Lee.
\newblock {Well-posedness and ill-posedness for the cubic fractional
  Schr{\"o}dinger equations}.
\newblock {\em Discrete and Continuous Dynamical Systems}, 35(7):2863--2880,
  2015.

\bibitem[CKO12]{17}
James Colliander, Soonsik Kwon, and Tadahiro Oh.
\newblock {A remark on normal forms and the ¡°upside-down¡± I-method for
  periodic NLS: growth of higher Sobolev norms}.
\newblock {\em Journal d'Analyse Math{\'e}matique}, 118(1):55--82, 2012.

\bibitem[Din18]{29}
VD~Dinh.
\newblock {Well-posedness of nonlinear fractional Schr{\"o}dinger and wave
  equations in Sobolev spaces}.
\newblock {\em International Journal of Applied Mathematics}, 31(4):483, 2018.

\bibitem[EGT19]{31}
M~Burak Erdo{\u{g}}an, T~Burak G{\"u}rel, and Nikolaos Tzirakis.
\newblock {Smoothing for the fractional Schr{\"o}dinger equation on the torus
  and the real line}.
\newblock {\em Indiana University Mathematics Journal}, 68(2):369--392, 2019.

\bibitem[Eva22]{25}
Lawrence~C Evans.
\newblock {\em {Partial differential equations}}, volume~19.
\newblock American mathematical society, 2022.

\bibitem[FGO18]{8}
Kazumasa Fujiwara, Vladimir Georgiev, and Tohru Ozawa.
\newblock {Higher order fractional Leibniz rule}.
\newblock {\em Journal of Fourier Analysis and Applications}, 24(3):650--665,
  2018.

\bibitem[G{\etalchar{+}}08]{10}
Loukas Grafakos et~al.
\newblock {\em {Classical fourier analysis}}, volume~2.
\newblock Springer, 2008.

\bibitem[GH13]{27}
Boling Guo and Zhaohui Huo.
\newblock {Well-posedness for the nonlinear fractional Schr{\"o}dinger equation
  and inviscid limit behavior of solution for the fractional Ginzburg-Landau
  equation.}
\newblock {\em Fractional Calculus \& Applied Analysis}, 16(1), 2013.

\bibitem[GHX08]{37}
Boling Guo, Yongqian Han, and Jie Xin.
\newblock {Existence of the global smooth solution to the period boundary value
  problem of fractional nonlinear Schr{\"o}dinger equation}.
\newblock {\em Applied Mathematics and Computation}, 204(1):468--477, 2008.

\bibitem[Gin]{21}
Jean Ginibre.
\newblock {Le probleme de Cauchy pour des EDP semi-lin{\'e}aires
  p{\'e}riodiques en variables d¡¯espace}.

\bibitem[H{\"o}r]{5}
Lars H{\"o}rmander.
\newblock {The Analysis of Linear Partial Differential Operators I Distribution
  Theory and Fourier Analysis}.

\bibitem[HS15]{26}
Younghun Hong and Yannick Sire.
\newblock {On Fractional Schr{\"o}dinger Equations in sobolev spaces}.
\newblock {\em Communications on Pure and Applied Analysis}, 14(6):2265--2282,
  2015.

\bibitem[IP14]{36}
Alexandru~D Ionescu and Fabio Pusateri.
\newblock {Nonlinear fractional Schr{\"o}dinger equations in one dimension}.
\newblock {\em Journal of Functional Analysis}, 266(1):139--176, 2014.

\bibitem[Kar86]{3}
Vladimir~Nikolaevich Karpushkin.
\newblock {Uniform estimates of oscillatory integrals with parabolic or
  hyperbolic phase}.
\newblock {\em Journal of Soviet Mathematics}, 33(5):1159--1188, 1986.

\bibitem[KPV93]{7}
Carlos~E Kenig, Gustavo Ponce, and Luis Vega.
\newblock {Well-posedness and scattering results for the generalized
  Korteweg-de Vries equation via the contraction principle}.
\newblock {\em Communications on Pure and Applied Mathematics}, 46(4):527--620,
  1993.

\bibitem[KT98]{33}
Markus Keel and Terence Tao.
\newblock {Endpoint strichartz estimates}.
\newblock {\em American Journal of Mathematics}, 120(5):955--980, 1998.

\bibitem[Las02]{34}
Nick Laskin.
\newblock {Fractional schr{\"o}dinger equation}.
\newblock {\em Physical Review E}, 66(5):056108, 2002.

\bibitem[Li19]{9}
Dong Li.
\newblock {On Kato--Ponce and fractional Leibniz}.
\newblock {\em Revista matem{\'a}tica iberoamericana}, 35(1):23--100, 2019.

\bibitem[MS25]{32}
Alexandre Megretski and Nikolaos Skouloudis.
\newblock {Global Well-posedness for the periodic fractional cubic NLS in 1D}.
\newblock {\em arXiv preprint arXiv:2508.01204}, 2025.

\bibitem[OV16]{20}
Tohru Ozawa and Nicola Visciglia.
\newblock {An improvement on the Br{\'e}zis--Gallou{\"e}t technique for 2D NLS
  and 1D half-wave equation}.
\newblock In {\em Annales de l'Institut Henri Poincar{\'e} C, Analyse non
  lin{\'e}aire}, volume~33, pages 1069--1079. Elsevier, 2016.

\bibitem[Sta97]{16}
Gigliola Staffilani.
\newblock {On the growth of high Sobolev norms of solutions for $ KdV $ and
  Schr{\"o}dinger equations}.
\newblock {\em Duke Math. J.}, 90(1):109--142, 1997.

\bibitem[Thi17]{1}
Joseph Thirouin.
\newblock {On the growth of Sobolev norms of solutions of the fractional
  defocusing NLS equation on the circle}.
\newblock In {\em Annales de l'Institut Henri Poincare (C) Non Linear
  Analysis}, volume~34, pages 509--531. Elsevier, 2017.

\bibitem[Tri92]{14}
Hans Triebel.
\newblock {\em {Theory of Function Spaces. II}}, volume~84.
\newblock Birkh\"auser Verlag, Basel, 1992.

\bibitem[Tsu89]{22}
Masayoshi Tsutsumi.
\newblock {On smooth solutions to the initial-boundary value problem for the
  nonlinear Schr{\"o}dinger equation in two space dimensions}.
\newblock {\em Nonlinear Analysis: Theory, Methods \& Applications},
  13(9):1051--1056, 1989.

\bibitem[Zho08]{19}
Sijia Zhong.
\newblock {The growth in time of higher Sobolev norms of solutions to
  Schr{\"o}dinger equations on compact Riemannian manifolds}.
\newblock {\em Journal of Differential Equations}, 245(2):359--376, 2008.

\end{thebibliography}

	\end{document}